\documentclass[11pt,reqno]{amsart}

\usepackage{lmodern}
\usepackage[T1]{fontenc}
\usepackage[utf8]{inputenc}
\usepackage[english]{babel}
\usepackage{microtype}
\usepackage{amsmath,amssymb,amsthm,mathtools,float}
\usepackage{enumerate}
\usepackage{xcolor}
\usepackage{tikz}
\usetikzlibrary{arrows.meta}
\usetikzlibrary{decorations.markings} 
\usepackage[pdftex]{hyperref}

\hypersetup{
    unicode=false,          
    pdftoolbar=true,        
   pdfmenubar=true,        
   pdffitwindow=false,     
   pdfstartview={FitH},    
   pdftitle={},    
   pdfauthor={Author},     
    pdfcreator={Creator},   
    pdfproducer={Producer}, 
   pdfkeywords={keyword1} {key2} {key3}, 
   pdfnewwindow=true,      
    colorlinks=true,       
    linkcolor=blue,          
    citecolor=black,        
    filecolor=magenta,      
    urlcolor=black,    
  pdfauthor={Per Alexandersson},
  pdftitle={Sink Polynomials of Line Graphs and Weighted Matching Polynomials}
}

\newtheorem{theorem}{Theorem}[section]
\newtheorem{proposition}[theorem]{Proposition}
\newtheorem{lemma}[theorem]{Lemma}
\newtheorem{corollary}[theorem]{Corollary}
\theoremstyle{definition}
\newtheorem{definition}[theorem]{Definition}
\newtheorem{example}[theorem]{Example}
\newtheorem{remark}[theorem]{Remark}
\newtheorem{question}[theorem]{Question}

\DeclareMathOperator{\sink}{sink}
\DeclareMathOperator{\ao}{ao}

\newcommand{\defin}[1]{%
\relax\ifmmode
\textcolor{blue}{#1}%
\else\textcolor{blue}{\emph{#1}}%
\fi
}
\newcommand{\setV}{V}
\newcommand{\setE}{E}
\newcommand{\matchings}{\mathcal{M}}
\newcommand{\indsets}{\mathcal{I}}
\newcommand{\AO}{\operatorname{AO}}
\newcommand{\Or}{\operatorname{O}}

\usepackage{todonotes}
\presetkeys%
    {todonotes}%
    {inline,backgroundcolor=yellow}{}

\definecolor{pBlue}{RGB}{86,139,190}
\definecolor{pOrange}{RGB}{211,153,80}

\title[Sink Polynomials of Line Graphs]{Sink Polynomials of Line Graphs and Weighted Matching Polynomials}

\author[P.~Alexandersson]{Per Alexandersson}
\address{Department of Mathematics, Stockholm University, SE-106 91 Stockholm, Sweden}
\email{per.w.alexandersson@gmail.com}

\author[L.~Saud]{Leonardo Saud Maia Leite}
\address{Department of Mathematics, KTH Royal Institute of Technology, SE-100 44 Stockholm,
Sweden}
\email{lsml@kth.se}

\keywords{line graph, acyclic orientation, sink polynomial, matching polynomial, real-rooted polynomial, chromatic quasisymmetric function, LLT polynomial}
\date{}

\begin{document}

\raggedbottom

\begin{abstract}
    Chromatic quasisymmetric functions and unicellular LLT polynomials give symmetric-function interpretations of graph orientations counted by sinks. For natural unit interval graphs, we prove that the acyclic sink polynomial is real-rooted for every nonnegative ascent weight.  Equivalently, the elementary coefficients of the chromatic quasisymmetric function, grouped by partition length, form a real-rooted polynomial.  The Carlsson--Mellit relation expresses this polynomial as a plethystic specialization of the corresponding LLT polynomial, and the Hessenberg correspondence gives an interpretation in cohomology.  More generally, the acyclic sink polynomial is real-rooted for chordal claw-free graphs, and the all-orientations sink polynomial is real-rooted for every claw-free graph.  We also study a real-rooted polynomial counting joint minima in local edge orders.  It agrees with the acyclic sink polynomial of the line graph exactly for forests.  An exact formula for unicyclic roots gives a line-graph counterexample to acyclic sink real-rootedness.
\end{abstract}

\maketitle
\tableofcontents

\newpage

\section{Introduction}\label{sec:introduction}

Our motivation comes from the orientation interpretations of chromatic
quasisymmetric functions and unicellular LLT polynomials.  These connect the
coefficients in the elementary basis to statistics on orientations.  We study
the polynomial obtained by grouping those coefficients by partition length,
and ask when its zeros are real.

Let \(H_h\) be the natural unit interval graph on \([n]\) associated with a
Hessenberg function \(h\), and write
\[
X_{H_h}(X;q)=\sum_{\lambda\vdash n}c_\lambda(q)e_\lambda(X)
\]
for its chromatic quasisymmetric function.  The sink theorem of
J.~Shareshian and M.~L.~Wachs~\cite[Theorem~5.3]{ShareshianWachs2016}
identifies its
partition-length specialization with the acyclic sink polynomial:
\[
\sum_{\lambda\vdash n}c_\lambda(q)t^{\ell(\lambda)}
=\sum_{\mathcal O\in\AO(H_h)}
 q^{\operatorname{asc}(\mathcal O)}t^{\sink(\mathcal O)}
=S_{H_h}^{a}(t;q).
\]
Here \(\ell(\lambda)\) counts parts, \(\sink(\mathcal O)\) counts sinks,
and \(\operatorname{asc}(\mathcal O)\) counts edges directed from a smaller
to a larger label.  The Carlsson--Mellit relation connects the same polynomial
to the unicellular LLT polynomial \(\mathcal L_h\).  If \(\Phi_t\) is the
homomorphism defined by \(\Phi_t(e_m)=t\) for \(m\geq1\), then
\[
S_{H_h}^{a}(t;q)
=\Phi_t\bigl(X_{H_h}(X;q)\bigr)
=\Phi_t\!\left(\frac{\mathcal L_h[(q-1)X;q]}{(q-1)^n}\right).
\]
The brackets denote plethystic substitution, and the quotient is interpreted
through the polynomial identity
\(\mathcal L_h[(q-1)X;q]=(q-1)^nX_{H_h}(X;q)\);
see \cite[Lemma~6.1]{AlexanderssonPanova2018} and
Section~\ref{sec:applications} for the conventions.

We prove that \(S_{H_h}^{a}(t;q)\) is real-rooted in \(t\) for every
real \(q\geq0\) (Theorem~\ref{thm:unitIntervalAscent}).  Thus, for each
such \(q\), the elementary coefficients grouped by length are nonnegative,
log-concave and have no internal zeros.  Through the Hessenberg correspondence,
this is also a statement about the homogeneous coefficients of the graded
Frobenius characteristic of regular semisimple Hessenberg cohomology
(Corollary~\ref{cor:hessenbergLength}).

The direct specialization \(\Phi_t(\mathcal L_h(X;1+u))\) counts
half-sinks~\cite[Proposition~5.2]{AlexanderssonPanova2018}: vertices with no
outgoing edge to a larger label.  Their generating polynomials factor into
linear terms read from the area sequence, for both all and acyclic
orientations (Proposition~\ref{prop:lltLengthProduct}).  Ordinary sinks impose
conditions on both smaller and larger neighbors.  Proving real-rootedness for
this coupled statistic is the main problem of the paper.

R.~P.~Stanley's evaluation of the chromatic polynomial at \(-1\) counts
acyclic orientations~\cite{Stanley1973}.  Recording their sinks gives a finer
invariant.  His sink theorem~\cite[Theorem~3.3]{Stanley1995} identifies it
with the elementary coefficients of the chromatic symmetric function grouped
by partition length.  B.-H.~Hwang, W.-S.~Jung, K.-J.~Lee, J.~Oh, and
S.-H.~Yu~\cite{HwangEtAl2021} studied the multivariate generating polynomial
that records the sink set of each acyclic orientation.  We consider its
univariate specialization, in which an orientation with \(k\) sinks contributes
\(t^k\), and compare it with the corresponding sum over all orientations.

The graph-theoretic results extend beyond natural unit interval graphs.
We prove real-rootedness for
acyclic orientations of chordal claw-free graphs
(Theorem~\ref{thm:chordalAcyclic}) and for all orientations of claw-free graphs
(Theorem~\ref{thm:clawFreeAllOrient}).  We also construct a line graph whose
acyclic sink polynomial has nonreal zeros.  Thus the chordal hypothesis in
the first result cannot simply be omitted, even within the class of line graphs.

For a line graph \(L(G)\), a third enumerator explains the forest case.  Choose
a total order on the edges incident to each vertex of the root graph \(G\).
An edge is jointly minimal if it comes first at both endpoints.  We define the
minima polynomial \(N_G(t)\) by counting these families of local orders according
to their number of jointly minimal edges.  It is real-rooted for every \(G\)
(Theorem~\ref{thm:minimaMatching}).
Local edge orders orient each line-graph edge toward the smaller of its two root
edges, and joint minima become sinks.  We prove that
\(N_G(t)=S_{L(G)}^{a}(t)\) holds exactly when \(G\) is a forest
(Theorem~\ref{thm:forestEquality}).

The forest correspondence and the all-orientations theorem extend to prescribed
edge directions: the resulting polynomials are either zero or real-rooted
(Corollaries~\ref{cor:frozenForestRealRooted} and~\ref{cor:frozenClawFree}).
For a connected unicyclic root, we express the acyclic sink polynomial of
its line graph as a difference of weighted matching polynomials
(Theorem~\ref{thm:unicyclicCorrection}).  This formula gives the 16-vertex
counterexample in Example~\ref{ex:unicyclicCounterexample}.

A common indicator expansion relates these enumerators to matching and
independence polynomials.  After translating the variable by one, the
coefficient attached to a prescribed sink set counts orientations in which
every vertex of that set is a sink.  For acyclic orientations, this is the
number of acyclic orientations after deleting the set.  On a chordal graph,
these deletion counts factor into positive vertex weights.  For all
orientations, the directions at a prescribed sink set are forced and every
other edge is free, so product weights arise for every graph.  In the local-order
model, prescribed jointly minimal edges form a matching, and their counts
factor over the endpoints.

These identities allow us to apply the real-rootedness theorem for weighted
matching polynomials of O.~J.~Heilmann and E.~H.~Lieb~\cite{HeilmannLieb1972}
and the independence-polynomial theorem of M.~Chudnovsky and
P.~Seymour~\cite{ChudnovskySeymour2007}, in the weighted form proved by
A.~Engstr\"om~\cite{Engstrom2007}.  A later proof by J.~D.~Leake and
N.~R.~Ryder~\cite{LeakeRyder2019} expresses the latter result through same-phase
stability.

The paper is outlined as follows. Section~\ref{sec:preliminaries} gives the
sink-set expansion and the two weighted real-rootedness theorems. In Section \ref{sec:acyclic-pol}, we study the acyclic sink polynomial for some families of graphs. Section~\ref{sec:chordal} factors the acyclic deletion counts and gives a
second proof through a recurrence for marked sinks.  We derive the minima
matching identity in Section~\ref{sec:minima}, then use the local-order
correspondence to prove the forest and prescribed-direction results in
Section~\ref{sec:forests}. Section~\ref{sec:examples} presents a formula for unicyclic graphs and an example showing that not all claw-free graphs have real-rooted acyclic sink polynomial. Section~\ref{sec:allOrient} treats all orientations, and we show that the sink polynomial of all claw-free graphs is real-rooted. Section~\ref{sec:applications} gives explicit realizations and applications to Hessenberg cohomology and LLT polynomials, and Section~\ref{sec:xDescents} relates sinks to \(X\)-descent sets through Stanley's linear map.  We compare it with fundamental length and Brenti's \(w\)-polynomial, then ask which linear specializations give real-rooted polynomials on these families.

\section{Preliminaries}\label{sec:preliminaries}

All graphs are finite and simple.  For a graph \(G\), we write \(\setV(G)\) and \(\setE(G)\) for its vertex and edge sets, and \(d_G(v)\) for the degree of \(v\in\setV(G)\).  We write \(N_G(v)\) for its neighborhood and \(G[W]\) for the subgraph induced by \(W\subseteq\setV(G)\). The \defin{line graph} \(L(G)\) has
vertex set \(\setE(G)\), and two vertices of \(L(G)\) are adjacent when the corresponding edges of \(G\) have a common endpoint.

An \defin{orientation} of \(G\) assigns a direction to every edge of \(G\). A vertex is a \defin{sink} when every incident edge points toward it.  By convention, an isolated vertex is a sink.  An orientation is \defin{acyclic} if it has no directed cycle.  We write $\defin{\Or(G)}$ for the set of all orientations of $G$, \(\defin{\AO(G)}\) for the set of acyclic orientations, \(\defin{\operatorname{Sink}(\mathcal O)}\) for the sink set of an orientation $\mathcal{O}$, and \(\defin{\sink(\mathcal O)}\coloneqq|\operatorname{Sink}(\mathcal O)|\).

A graph is \defin{claw-free} if it has no induced subgraph isomorphic to \(K_{1,3}\), and \defin{chordal} if every cycle of length at least four has
a chord.  We write \(\defin{\indsets(G)}\) for the independent vertex sets of \(G\), including the empty set.

For a graph \(G\), we define the \defin{acyclic sink polynomial} and the \defin{sink polynomial}, respectively, as
\[
    \defin{S_G^a(t)} \coloneqq \sum_{\mathcal{O}\in\AO(G)} t^{\sink(\mathcal{O})} \quad \mbox{and} \quad \defin{S_G(t)}\coloneqq \sum_{\mathcal{O} \in \Or(G)} t^{\sink(\mathcal{O})}.
\]

\begin{example}
\label{ex:triangle}
    For example, the triangle $K_3$ has six acyclic orientations, each with one sink, and two directed cycles, each with no sink, see Figure \ref{fig:K3-orientations}. Hence
    \[
        S_{K_3}^{a}(t)=6t \quad \mbox{and} \quad S_{K_3}(t)=2+6t.
    \]
    Both polynomials equal \(1\) for the empty graph.
    
    \begin{figure}[!ht]
        \centering
        \begin{tikzpicture}[scale=0.85, vertex/.style={circle,fill=black,inner sep=2.2pt},edge label/.style={fill=white,inner sep=1.5pt,font=\small},>=Stealth]
            \node[scale=0.5, vertex] (v1) at (0,0) {};
            \node[scale=0.5, vertex] (v2) at (2,0) {};
            \node[scale=0.5, vertex] (v3) at (1,1.732) {};
            \draw[postaction={decorate},decoration={markings,mark=at position 0.6 with {\arrow{>}}}] (v1) -- (v2);
            \draw[postaction={decorate},decoration={markings,mark=at position 0.6 with {\arrow{>}}}] (v3) -- (v2);
            \draw[postaction={decorate},decoration={markings,mark=at position 0.6 with {\arrow{>}}}] (v1) -- (v3);
    
            \node[scale=0.5, vertex] (v4) at (4,0) {};
            \node[scale=0.5, vertex] (v5) at (6,0) {};
            \node[scale=0.5, vertex] (v6) at (5,1.732) {};
            \draw[postaction={decorate},decoration={markings,mark=at position 0.6 with {\arrow{>}}}] (v4) -- (v5);
            \draw[postaction={decorate},decoration={markings,mark=at position 0.6 with {\arrow{>}}}] (v5) -- (v6);
            \draw[postaction={decorate},decoration={markings,mark=at position 0.6 with {\arrow{>}}}] (v4) -- (v6);
    
            \node[scale=0.5, vertex] (v7) at (8,0) {};
            \node[scale=0.5, vertex] (v8) at (10,0) {};
            \node[scale=0.5, vertex] (v9) at (9,1.732) {};
            \draw[postaction={decorate},decoration={markings,mark=at position 0.6 with {\arrow{>}}}] (v8) -- (v7);
            \draw[postaction={decorate},decoration={markings,mark=at position 0.6 with {\arrow{>}}}] (v9) -- (v8);
            \draw[postaction={decorate},decoration={markings,mark=at position 0.6 with {\arrow{>}}}] (v7) -- (v9);
    
            \node[scale=0.5, vertex] (v19) at (12,0) {};
            \node[scale=0.5, vertex] (v20) at (14,0) {};
            \node[scale=0.5, vertex] (v21) at (13,1.732) {};
            \draw[postaction={decorate},decoration={markings,mark=at position 0.6 with {\arrow{>}}}] (v20) -- (v19);
            \draw[postaction={decorate},decoration={markings,mark=at position 0.6 with {\arrow{>}}}] (v20) -- (v21);
            \draw[postaction={decorate},decoration={markings,mark=at position 0.6 with {\arrow{>}}}] (v19) -- (v21);
    
            \node[scale=0.5, vertex] (v10) at (0,-3) {};
            \node[scale=0.5, vertex] (v11) at (2,-3) {};
            \node[scale=0.5, vertex] (v12) at (1,-1.268) {};
            \draw[postaction={decorate},decoration={markings,mark=at position 0.6 with {\arrow{>}}}] (v10) -- (v11);
            \draw[postaction={decorate},decoration={markings,mark=at position 0.6 with {\arrow{>}}}] (v12) -- (v11);
            \draw[postaction={decorate},decoration={markings,mark=at position 0.6 with {\arrow{>}}}] (v12) -- (v10);
    
            \node[scale=0.5, vertex] (v13) at (4,-3) {};
            \node[scale=0.5, vertex] (v14) at (6,-3) {};
            \node[scale=0.5, vertex] (v15) at (5,-1.268) {};
            \draw[postaction={decorate},decoration={markings,mark=at position 0.6 with {\arrow{>}}}] (v13) -- (v14);
            \draw[postaction={decorate},decoration={markings,mark=at position 0.6 with {\arrow{>}}}] (v14) -- (v15);
            \draw[postaction={decorate},decoration={markings,mark=at position 0.6 with {\arrow{>}}}] (v15) -- (v13);
    
            \node[scale=0.5, vertex] (v16) at (8,-3) {};
            \node[scale=0.5, vertex] (v17) at (10,-3) {};
            \node[scale=0.5, vertex] (v18) at (9,-1.268) {};
            \draw[postaction={decorate},decoration={markings,mark=at position 0.6 with {\arrow{>}}}] (v17) -- (v16);
            \draw[postaction={decorate},decoration={markings,mark=at position 0.6 with {\arrow{>}}}] (v18) -- (v17);
            \draw[postaction={decorate},decoration={markings,mark=at position 0.6 with {\arrow{>}}}] (v18) -- (v16);
    
            \node[scale=0.5, vertex] (v22) at (12,-3) {};
            \node[scale=0.5, vertex] (v23) at (14,-3) {};
            \node[scale=0.5, vertex] (v24) at (13,-1.268) {};
            \draw[postaction={decorate},decoration={markings,mark=at position 0.6 with {\arrow{>}}}] (v23) -- (v22);
            \draw[postaction={decorate},decoration={markings,mark=at position 0.6 with {\arrow{>}}}] (v23) -- (v24);
            \draw[postaction={decorate},decoration={markings,mark=at position 0.6 with {\arrow{>}}}] (v24) -- (v22);
        \end{tikzpicture}
        \caption{All the orientations of the triangle $K_3$.}
        \label{fig:K3-orientations}
    \end{figure}
\end{example}

The evaluation \(S_G^{a}(1)\) is the number of acyclic orientations of \(G\).  Stanley's evaluation~\cite[Corollary~1.3]{Stanley1973} gives
\[
    S_G^{a}(1)=(-1)^{|\setV(G)|}\chi_G(-1),
\]
where \(\chi_G\) is the chromatic polynomial of $G$.

We now state two real-rootedness theorems for weighted graph polynomials that will be used throughout the whole paper.  Let \(G\) be a graph and let \(\omega_e\geq 0\) for every \(e\in\setE(G)\).  The \defin{weighted matching polynomial} is defined as
\[
    \defin{\mu_G(t;\omega)} \coloneqq \sum_{M\in\matchings(G)}t^{|M|}\prod_{e\in M}\omega_e,
\]
where \(\defin{\matchings(G)}\) is the set of matchings of \(G\), including the empty matching.  We write \(\defin{\setV(M)}\) for the vertices covered by a matching \(M\).

We use the following form of the Heilmann--Lieb theorem:

\begin{theorem}[Heilmann--Lieb~\cite{HeilmannLieb1972}]
\label{thm:heilmannLieb}
    If \(\omega_e\geq 0\) for every \(e\in\setE(G)\), then \(\mu_G(t;\omega)\) has only real nonpositive zeros.
\end{theorem}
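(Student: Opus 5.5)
We prove the weighted Heilmann--Lieb theorem by induction on \(|\setV(G)|\), using the standard interlacing argument with a vertex-deletion recurrence. First we reduce to strictly positive weights. Edges with \(\omega_e=0\) contribute nothing to \(\mu_G(t;\omega)\), so we may delete them. The zeros are also nonpositive automatically once they are real, because all coefficients of \(\mu_G(t;\omega)\) in \(t\) are nonnegative and the constant term is \(1\). So it suffices to prove real-rootedness.

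It is convenient to pass to the homogenized form
\[
    m_G(x)=\sum_{M\in\matchings(G)}(-1)^{|M|}x^{|\setV(G)|-2|M|}\prod_{e\in M}\omega_e,
\]
which satisfies \(m_G(x)=x^{n}\mu_G(-x^{-2};\omega)\) with \(n=|\setV(G)|\). All zeros of \(\mu_G\) are real and nonpositive exactly when all zeros of \(m_G\) are real. Fix a vertex \(v\). Splitting matchings according to whether \(v\) is covered, and if so by which edge, gives the recurrence
\[
    m_G(x)=x\,m_{G-v}(x)-\sum_{u\in N_G(v)}\omega_{uv}\,m_{G-v-u}(x).
\]

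The inductive claim is stronger than real-rootedness: for every vertex \(v\), the polynomial \(m_{G-v}\) interlaces \(m_G\). Both are monic, of degrees \(n-1\) and \(n\), with positive leading coefficients.

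Assume the claim for smaller graphs. For each \(u\in N_G(v)\), the polynomial \(m_{G-v-u}\) interlaces \(m_{G-v}\), since it is obtained by deleting the vertex \(u\) from \(G-v\). Take the zeros \(\beta_1>\dots>\beta_{n-1}\) of \(m_{G-v}\). At each \(\beta_j\) the recurrence gives
\[
    m_G(\beta_j)=-\sum_{u}\omega_{uv}\,m_{G-v-u}(\beta_j).
\]
Every \(m_{G-v-u}\) interlaces \(m_{G-v}\) and has positive leading coefficient, so each has sign \((-1)^{j-1}\) at \(\beta_j\), with weak inequality. Hence \(m_G(\beta_j)\) has sign \((-1)^{j}\). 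Since \(m_G\) is positive at \(+\infty\) and has sign \((-1)^n\) at \(-\infty\), the intermediate value theorem places one zero of \(m_G\) in each gap: above \(\beta_1\), between consecutive \(\beta_j\), and below \(\beta_{n-1}\). That accounts for all \(n\) zeros, so \(m_G\) is real-rooted and interlaced by \(m_{G-v}\).

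The main obstacle is degeneracy: the signs may vanish when the \(m_{G-v-u}\) share zeros with \(m_{G-v}\), for instance when \(G\) is disconnected or has repeated roots. We handle this in one of two ways.
\begin{itemize}
    \item Work with the closed cone of interlacing polynomials. The set of polynomials interlacing a fixed real-rooted polynomial is a convex cone, so a nonnegative combination still interlaces and no strictness is needed.
    \item Perturb the weights to generic positive values, obtain strict interlacing there, and pass to the limit using Hurwitz's theorem, since real-rootedness is closed under coefficient limits.
\end{itemize}
The base cases are the empty graph, with \(m=1\), and a single vertex, with \(m=x\).
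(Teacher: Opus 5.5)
The paper does not prove this theorem; it only cites Heilmann--Lieb. Your vertex-recurrence interlacing induction is the standard argument, and it is correct in the nondegenerate case, where $m_{G-v}$ has simple zeros $\beta_j$ and no $m_{G-v-u}$ vanishes at any of them. The gap is the degenerate case, which you rightly call the main obstacle but do not actually close.

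Your second bullet is false as stated, because the common zeros are structural rather than accidental. Take $G=K_{1,3}$ with centre $c$ and leaves $v,a,b$. Then $m_{G-v}=x^3-(\omega_{ca}+\omega_{cb})x$, and the only term in the sum is $m_{G-v-c}=x^2$. Both vanish at $0$ for \emph{all} positive weights, so $m_G(0)=0$; indeed $m_G=x^2(x^2-\omega_{cv}-\omega_{ca}-\omega_{cb})$. Similarly, for $G=2K_2$ the zeros of the component avoiding $v$ are shared by $m_G$, $m_{G-v}$ and $m_{G-v-u}$. So no perturbation of the weights on the given edge set yields strict interlacing.

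Your first bullet is true but fixes the wrong step. The cone property shows that $h=\sum_u\omega_{uv}m_{G-v-u}$ interlaces $m_{G-v}$. Strictness is needed in the next step, where you deduce that $m_{G-v}$ interlaces $x\,m_{G-v}-h$. At a common zero $\beta_j$ of $h$ and $m_{G-v}$ you get $m_G(\beta_j)=0$, and the intermediate-value count no longer locates all $n$ zeros. The same happens when $v$ is isolated.

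Any of the following closes the gap.

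(i) Combined with your first bullet, put $g=\gcd(m_{G-v},h)$. Then $m_{G-v}/g$ has simple zeros and $h/g$ strictly interlaces it, so your sign argument applies verbatim to $x\,m_{G-v}/g-h/g$. Multiplying both polynomials by the real-rooted factor $g$ preserves interlacing.

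(ii) Write $h/m_{G-v}=\sum_i c_i/(x-\beta_i)$ over the distinct zeros of $m_{G-v}$, with $c_i\geq0$ because $h$ interlaces $m_{G-v}$. Then $m_G/m_{G-v}=x-\sum_i c_i/(x-\beta_i)$ is strictly increasing on each interval cut out by the poles with $c_i>0$, running from $-\infty$ to $+\infty$ on each. This locates the zeros of $m_G$ not shared with $m_{G-v}$ and gives the interlacing.

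(iii) If you prefer a perturbation, give weight $\epsilon>0$ to every \emph{non-edge}, so $G$ becomes a complete graph with all weights positive. Every $G-v$ and $G-v-u$ is again of this form, each $\omega_{uv}$ is positive, and $v$ has a neighbour. Your sign argument therefore proves strict interlacing inductively, with no genericity assumption. Letting $\epsilon\to0$ finishes, since the zeros of a monic polynomial depend continuously on its coefficients.
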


We shall also use the weighted extension of the Chudnovsky--Seymour theorem for independence polynomials.  For nonnegative vertex weights \(\eta=\{\eta_v\}_{v\in\setV(G)}\), set
\[
    \defin{\operatorname{I}_G(t;\eta)}\coloneqq \sum_{I\in\indsets(G)}t^{|I|}\prod_{v\in I}\eta_v.
\]

\begin{theorem}[Chudnovsky--Seymour~\cite{ChudnovskySeymour2007}, Engstr\"om~\cite{Engstrom2007}]
\label{thm:weightedClawFree}
    If \(G\) is claw-free and \(\eta_v\geq 0\) for every \(v\in\setV(G)\), then \(\operatorname{I}_G(t;\eta)\) has only real nonpositive zeros.
\end{theorem}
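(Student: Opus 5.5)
The plan is to reduce the weighted statement to the unweighted Chudnovsky--Seymour theorem, which states that \(\operatorname{I}_G(t;\mathbf 1)\) is real-rooted for every claw-free \(G\). I take that unweighted theorem as the external input. The reduction has three steps: clique blow-ups handle positive integer weights, rescaling the variable handles rational weights, and a limiting argument handles real weights. Zero weights are removed at the start. If \(\eta_v=0\), then every independent set containing \(v\) contributes zero, so \(\operatorname{I}_G(t;\eta)=\operatorname{I}_{G-v}(t;\eta|_{G-v})\). An induced subgraph of a claw-free graph is claw-free, so we may assume all \(\eta_v>0\).

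\emph{Integer weights.} Suppose every \(\eta_v=k_v\) is a positive integer. Let \(G^\eta\) be obtained by replacing each vertex \(v\) with a clique \(Q_v\) of size \(k_v\). Two vertices in different cliques \(Q_u\) and \(Q_v\) are adjacent exactly when \(uv\in\setE(G)\). An independent set of \(G^\eta\) meets each \(Q_v\) in at most one vertex, and the set of \(v\) it meets is independent in \(G\). Conversely, an independent set \(I\) of \(G\) lifts in exactly \(\prod_{v\in I}k_v\) ways, so \(\operatorname{I}_{G^\eta}(t;\mathbf 1)=\operatorname{I}_G(t;\eta)\).

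It remains to check that \(G^\eta\) is claw-free. Take a claw in \(G^\eta\) with center \(c\) and leaves \(x,y,z\).
\begin{itemize}
\item The leaves are pairwise nonadjacent, so they lie in three distinct cliques.
\item No leaf, say \(x\), lies in the same clique as \(c\). Vertices in one clique are adjacent twins with the same outside neighbourhood, so \(x\) would then be adjacent to \(y\), a contradiction.
\end{itemize}
Hence the four vertices lie in four distinct cliques, and their underlying vertices of \(G\) induce a claw in \(G\). This contradicts claw-freeness of \(G\), so \(G^\eta\) is claw-free and the unweighted theorem applies.

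\emph{Rational and real weights.} If \(\eta_v=m_v/N\) with positive integers \(m_v,N\), then \(\operatorname{I}_G(t;\eta)=\operatorname{I}_G(t/N;m)\). Rescaling the variable by a positive constant preserves real-rootedness, so the rational case follows from the integer case. For arbitrary positive real weights, choose rational weights \(\eta^{(j)}\to\eta\). The polynomials \(\operatorname{I}_G(t;\eta^{(j)})\) all have degree equal to the independence number of \(G\), since all weights are positive. They converge coefficientwise to \(\operatorname{I}_G(t;\eta)\), which has the same degree and constant term \(1\). By Hurwitz's theorem, a limit of real-rooted polynomials of fixed degree whose leading coefficient stays nonzero in the limit is real-rooted. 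Finally, all coefficients of \(\operatorname{I}_G(t;\eta)\) are nonnegative and the constant term is \(1\), so the polynomial is positive for \(t\ge 0\). Hence every real zero is negative, which gives the claimed nonpositivity.

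The main obstacle is the unweighted Chudnovsky--Seymour theorem itself, which I would not reprove. It is the deep input, proved via simplicial cliques and interlacing of the polynomials of vertex-deleted subgraphs. Within the reduction, the only point needing care is the claw-freeness of the clique blow-up, specifically ruling out a leaf that is a twin of the center, as done above.
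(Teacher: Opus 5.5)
Your proof is correct. The zero-weight deletion, the identity \(\operatorname{I}_{G^\eta}(t;\mathbf 1)=\operatorname{I}_G(t;\eta)\) for the clique blow-up, and the twin argument that keeps \(G^\eta\) claw-free all hold. So do the rescaling \(t\mapsto t/N\) for rational weights and the Hurwitz limit, where the constant term \(1\) keeps the limit nonzero.

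The paper, however, gives no argument for this statement. It takes the unweighted theorem from Chudnovsky--Seymour and quotes the weighted form directly from Engstr\"om's Theorem~2.5. It also remarks that the weighted form follows from the same-phase stability theorem of Leake--Ryder.

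Your route buys transparency: it shows the weighted statement is a formal consequence of the unweighted one. The only structural fact it needs is that claw-freeness survives substituting cliques for vertices, so the sole deep input is the original Chudnovsky--Seymour theorem. The cited route buys strength: Leake--Ryder establish a multivariate property of the independence polynomial of a claw-free graph. Specializing the vertex variables to \(\eta_v t\) then yields the weighted univariate statement directly, without passing through integer weights.
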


The unweighted theorem is due to Chudnovsky and Seymour. Engstr\"om proved the
weighted form stated here in \cite[Theorem~2.5]{Engstrom2007}.
It also follows from the later same-phase stability
theorem of Leake and Ryder~\cite[Theorem~3.5]{LeakeRyder2019}.

We finish this section by finding an expression for the acyclic sink polynomial of a graph $G$ as a sum over independent sets of $G$. To count orientations by sinks, we first prescribe a set of vertices that must be sinks. Such a set is independent, and removing it leaves an arbitrary acyclic orientation of the remaining graph.  Let \(\defin{\ao(G)}\) denote the number of acyclic orientations of \(G\).

\begin{proposition}[Sink-set expansion]
\label{prop:acyclicIndependentExpansion}
    For every graph \(G\),
    \begin{equation}
    \label{eq:acyclicIndependentExpansion}
        S_G^{a}(t) = \sum_{I\in\indsets(G)} \ao(G-I)(t-1)^{|I|},
    \end{equation}
    where \(G-I\) is the induced subgraph on \(\setV(G)\setminus I\). Consequently, for every graph \(G\),
    \begin{equation}
    \label{eq:acyclicLineMatchingExpansion}
        S_{L(G)}^{a}(t) = \sum_{M\in\matchings(G)} \ao\bigl(L(G-M)\bigr)(t-1)^{|M|},
    \end{equation}
    where \(G-M\) is obtained from $G$ by deleting the edges in \(M\).
\end{proposition}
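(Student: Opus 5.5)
The plan is the standard binomial (inclusion–exclusion) substitution \(t=(t-1)+1\). For each acyclic orientation \(\mathcal O\), write
\[
t^{\sink(\mathcal O)}=\bigl((t-1)+1\bigr)^{\sink(\mathcal O)}=\sum_{I\subseteq \operatorname{Sink}(\mathcal O)}(t-1)^{|I|}.
\]
Summing over \(\mathcal O\in\AO(G)\) and exchanging the order of summation gives
\[
S_G^a(t)=\sum_{I\subseteq \setV(G)}(t-1)^{|I|}\,\#\{\mathcal O\in\AO(G): I\subseteq\operatorname{Sink}(\mathcal O)\}.
\]
It then remains to identify the inner count with \(\ao(G-I)\) when \(I\) is independent, and with \(0\) otherwise.

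If \(I\) contains two adjacent vertices, the edge between them cannot point toward both, so the count is zero. This is why the sum may be restricted to \(\indsets(G)\).

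For \(I\) independent, I will exhibit a bijection between acyclic orientations of \(G\) in which every vertex of \(I\) is a sink and \(\AO(G-I)\).
\begin{enumerate}[(i)]
\item Every edge of \(G\) has at most one endpoint in \(I\), so the edges with an endpoint in \(I\) are forced to point into \(I\).
\item All remaining edges are exactly the edges of \(G-I\), and they may be oriented arbitrarily.
\item Acyclicity is unaffected by the forced edges: a directed cycle cannot pass through a sink, so any directed cycle lies entirely in \(G-I\).
\end{enumerate}
Hence restriction to \(G-I\) is a bijection onto \(\AO(G-I)\), and \eqref{eq:acyclicIndependentExpansion} follows. The main point needing care is step (iii), that forcing the edges at \(I\) cannot create a cycle; the rest is bookkeeping. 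The convention that isolated vertices are sinks is consistent with this, since for an isolated vertex the condition is vacuous.

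For \eqref{eq:acyclicLineMatchingExpansion}, apply \eqref{eq:acyclicIndependentExpansion} to \(L(G)\).
\begin{enumerate}[(i)]
\item The independent sets of \(L(G)\) are exactly the sets of pairwise non-adjacent edges of \(G\), that is, the matchings \(M\in\matchings(G)\), and \(|I|=|M|\).
\item Deleting the vertex set \(M\) from \(L(G)\) gives the induced subgraph of \(L(G)\) on \(\setE(G)\setminus M\).
\item This induced subgraph is precisely \(L(G-M)\), since adjacency in a line graph depends only on shared endpoints, and \(G-M\) keeps all vertices of \(G\).
\end{enumerate}
Substituting these identifications term by term gives \eqref{eq:acyclicLineMatchingExpansion}.
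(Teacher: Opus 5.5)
Your proposal is correct and follows essentially the same route as the paper: expand \(t^{\sink(\mathcal O)}\) over subsets of the sink set, restrict to independent \(I\), and use restriction to \(G-I\) as a bijection onto \(\AO(G-I)\); then identify independent sets of \(L(G)\) with matchings of \(G\) and \(L(G)-M\) with \(L(G-M)\). Your step (iii), that the forced edges create no directed cycle because a sink lies on no directed cycle, makes explicit a point the paper leaves implicit.
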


\begin{proof}
    We expand
    \[
        t^{\sink(\mathcal{O})}=\prod_v \left( 1+(t-1) \mathbf{1}_{v \in \operatorname{Sink}(\mathcal{O})} \right)
    \]
    and sum over all acyclic orientations. Observe that any set of sinks must be independent.  Conversely, if \(I\) is independent, restriction to \(G-I\) is a bijection from acyclic orientations of \(G\) in which every vertex of \(I\) is a sink to acyclic orientations of \(G-I\); the inverse directs every edge incident to \(I\) toward \(I\).  This proves \eqref{eq:acyclicIndependentExpansion}.
    
    Independent sets of \(L(G)\) are the matchings of \(G\), and deleting from
    \(L(G)\) the vertices corresponding to \(M\) gives \(L(G-M)\).  This proves
    \eqref{eq:acyclicLineMatchingExpansion}.
\end{proof}

\section{The acyclic sink polynomial case}
\label{sec:acyclic-pol}

In this section, we prove that the acyclic sink polynomial of all claw-free graphs is real-rooted. We show that $S^a_G(1+t)$ is equal to $\mathrm{I}_G (t; \eta)$ up to a constant for some weights $\eta = \{ \eta_v \}_{v \in V(G)}$, where each $\eta_v$ depend only on $v$, see Remark \ref{rmk:nu-depend-on-v}. We also define the minima polynomial $N_G(t)$, whose coefficients count the number of jointly minimal edges in a graph $G$, and prove that $N_G(t)$ is real-rooted for every graph $G$. In particular, we show that $N_T(t) = S^a_{L(T)}(t)$ for any forest $T$ and, hence, $S^a_{L(T)}(t)$ is real-rooted.

\subsection{Chordal claw-free graphs}
\label{sec:chordal}

We apply Proposition~\ref{prop:acyclicIndependentExpansion} to chordal graphs.  We show that the orientation count after deleting an independent set factors into one weight for each deleted vertex.

A chordal graph $G$ admits a \defin{reverse perfect elimination ordering} (or reverse PEO) \(v_1,\dotsc,v_n\), that is, an ordering of the vertices for which the earlier neighbors of each vertex form a clique. Fix such an ordering and set
\[
    d_j \coloneqq \bigl|N_G(v_j)\cap\{v_1,\dotsc,v_{j-1}\}\bigr|.
\]
Assign vertex \(v_i\) the weight
\begin{equation}\label{eq:chordalSinkWeight}
    \defin{\eta_i} \coloneqq \frac{1}{d_i+1} \prod_{\substack{j>i\\v_iv_j\in\setE(G)}} \frac{d_j}{d_j+1}.
\end{equation}
Every factor in this expression is positive.  Indeed, if \(i<j\) and
\(v_iv_j\) is an edge, then \(v_i\) is an earlier neighbor of \(v_j\), so
\(d_j\geq1\).

\begin{theorem}
\label{thm:chordalAcyclic}
    Let \(G\) be a chordal claw-free graph.  With the notation above,
    \begin{equation}
    \label{eq:chordalSinkIdentity}
        S_G^{a}(1+t)=\left(\prod_{j=1}^n(d_j+1)\right) \operatorname{I}_G(t;\eta).
    \end{equation}
    Consequently, \(S_G^{a}(t)\) has only real nonpositive zeros.
\end{theorem}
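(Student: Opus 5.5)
Start from the sink-set expansion, Proposition~\ref{prop:acyclicIndependentExpansion}, with \(t\) replaced by \(1+t\):
\[
S_G^{a}(1+t)=\sum_{I\in\indsets(G)}\ao(G-I)\,t^{|I|}.
\]
It then suffices to prove that, for every independent set \(I\),
\[
\ao(G-I)=\Bigl(\prod_{j=1}^n(d_j+1)\Bigr)\prod_{v_i\in I}\eta_i .
\]
The key input is the classical count of acyclic orientations of a chordal graph. Build \(G\) along the reverse PEO. When \(v_j\) is added, its earlier neighbors form a clique of size \(d_j\), and that clique is already totally ordered by any acyclic orientation. A new edge set closes no directed cycle exactly when \(v_j\) is inserted into one of the \(d_j+1\) gaps of that linear order. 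Any cycle through \(v_j\) would enter and leave it via two earlier neighbors, and those are adjacent; this reduces the question to a triangle, which the insertion rule controls. Hence \(\ao(G)=\prod_j(d_j+1)\).

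Next, for an independent \(I\), I would use the induced ordering on \(G-I\). Restricting a reverse PEO to an induced subgraph again gives a reverse PEO, since the earlier neighbors of each vertex are a subset of a clique. Write \(d_j^{I}\) for the earlier-neighbor count of \(v_j\) in \(G-I\). Then \(d_j^{I}=d_j-|N(v_j)\cap I\cap\{v_1,\dots,v_{j-1}\}|\), and
\[
\ao(G-I)=\prod_{v_j\notin I}(d_j^{I}+1).
\]
Because \(I\) is independent and the earlier neighbors of \(v_j\) form a clique, \(v_j\) has at most one earlier neighbor in \(I\). So \(d_j^{I}\in\{d_j,\,d_j-1\}\), and the drop to \(d_j-1\) happens precisely when some \(v_i\in I\) with \(i<j\) is adjacent to \(v_j\). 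That \(v_i\) is unique, so each such \(j\) can be charged to a single deleted vertex.

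Now divide by \(\prod_j(d_j+1)\). Each \(v_i\in I\) contributes a factor \(1/(d_i+1)\), since its own factor disappears. Each later neighbor \(v_j\) of \(v_i\) contributes \(d_j/(d_j+1)\), and such a \(v_j\) is not in \(I\) by independence. Every remaining vertex contributes \(1\). By the uniqueness just noted, the ratio factors as \(\prod_{v_i\in I}\eta_i\) with \(\eta_i\) as in \eqref{eq:chordalSinkWeight}, which proves \eqref{eq:chordalSinkIdentity}.

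Finally, the weights \(\eta_i\) are positive and \(G\) is claw-free, so Theorem~\ref{thm:weightedClawFree} shows that \(\operatorname{I}_G(t;\eta)\) has only real nonpositive zeros. Therefore \(S^a_G(1+t)\) is real-rooted, and hence so is \(S^a_G(t)\). Its zeros are nonpositive because all of its coefficients are nonnegative.

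The main obstacle is the uniqueness step: that each non-deleted vertex loses at most one earlier neighbor to \(I\). That is exactly what makes the weights multiplicative, and it relies on the clique property of the reverse PEO together with the independence of \(I\). The rest is bookkeeping plus the standard chordal count.
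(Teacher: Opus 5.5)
Your proof is correct and follows the paper's argument essentially step for step: the sink-set expansion at $1+t$, the restricted reverse PEO on $G-I$, the observation that each surviving vertex loses at most one earlier neighbor to the independent set $I$, the regrouping of $\ao(G-I)/\ao(G)$ into $\prod_{v_i\in I}\eta_i$, and then the weighted Chudnovsky--Seymour theorem together with nonnegativity of the coefficients. The only difference is that you derive $\ao(G)=\prod_j(d_j+1)$ by the direct cut-insertion argument, which the paper uses later in Proposition~\ref{prop:markedChordalModel}. The paper instead obtains it from the chromatic factorization $\chi_G(t)=\prod_j(t-d_j)$ and Stanley's evaluation $\ao(G)=(-1)^n\chi_G(-1)$.
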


\begin{proof}
    We compute \(\ao(G-I)/\ao(G)\) for an independent set \(I\), then substitute in the sink-set expansion.  The reverse perfect elimination ordering gives the
    chromatic factorization
    \[
        \chi_G(t) = \prod_{j=1}^n(t-d_j).
    \]
    To see this, color the vertices in the chosen order.  The earlier neighbors of \(v_j\) form a clique and use \(d_j\) distinct colors, leaving \(t-d_j\)
    choices for \(v_j\).  Stanley's evaluation of the chromatic polynomial gives
    \begin{equation}
    \label{eq:chordalAoCount}
        \ao(G)= (-1)^n\chi_G(-1) = \prod_{j=1}^n(d_j+1).
    \end{equation}
    
    Fix an independent set \(I\) of \(G\), and let
    \[
        \varepsilon_j = \bigl|N_G(v_j)\cap I\cap\{v_1,\dotsc,v_{j-1}\}\bigr|.
    \]
    Since the earlier neighbors of \(v_j\) form a clique, we have \(\varepsilon_j\in\{0,1\}\).  The reverse perfect elimination ordering of $G$ restricts to a reverse perfect elimination ordering of \(G-I\), and the earlier degree of a surviving vertex \(v_j\) in $G - I$ is \(d_j-\varepsilon_j\).  Applying
    \eqref{eq:chordalAoCount} to \(G-I\) gives
    \[
        \ao(G-I) = \prod_{v_j\notin I}(d_j+1-\varepsilon_j).
    \]
    After division by \eqref{eq:chordalAoCount}, we obtain
    \begin{equation}
    \label{eq:chordalDeletionRatio}
        \frac{\ao(G-I)}{\ao(G)} = \prod_{v_i\in I}\frac{1}{d_i+1} \prod_{\substack{v_j\notin I\\\varepsilon_j=1}} \frac{d_j}{d_j+1}.
    \end{equation}
    
    By definition, every index \(j\) in the second product has a unique earlier neighbor \(v_i\in I\), since $\varepsilon_j = 1$. Conversely, if \(v_i\in I\), \(i<j\), and \(v_iv_j\) is an edge, then independence gives \(v_j\notin I\) and \(\varepsilon_j=1\).  Regrouping the second product in \eqref{eq:chordalDeletionRatio} by its unique selected earlier neighbor now gives
    \begin{equation}
    \label{eq:chordalDeletionFactorization}
        \ao(G-I) = \ao(G) \prod_{v_i\in I} \eta_i.
    \end{equation}
    
    Substituting \eqref{eq:chordalDeletionFactorization} in \eqref{eq:acyclicIndependentExpansion}, followed by \eqref{eq:chordalAoCount}, proves \eqref{eq:chordalSinkIdentity}.  Since \(G\) is claw-free and the weights \(\eta_i\) are positive, so Theorem~\ref{thm:weightedClawFree} shows that the polynomial on the right-hand of \eqref{eq:acyclicIndependentExpansion} side has only real nonpositive zeros. Since translation preserves real-rootedness, it follows that \(S_G^{a}(t)\) is real-rooted.
    
    Finally, \(S_G^{a}(t)\) has nonnegative coefficients and hence has no positive real zero.
\end{proof}

\begin{remark}
\label{rmk:nu-depend-on-v}
    The singleton case of \eqref{eq:chordalDeletionFactorization} gives the intrinsic formula
    \[
        \eta_i = \frac{\ao(H-v_i)}{\ao(H)}.
    \]
    Thus the weights $\eta_i$ do not depend on the chosen reverse perfect elimination order, even though \eqref{eq:chordalSinkWeight} computes them from that order.
\end{remark}

\begin{corollary}\label{cor:chordalLineGraphAcyclic}
If \(L(G)\) is chordal, then \(S_{L(G)}^{a}(t)\) has only real
nonpositive zeros.
\end{corollary}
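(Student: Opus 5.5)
The plan is to apply Theorem~\ref{thm:chordalAcyclic} directly to the graph \(H\coloneqq L(G)\). That theorem needs two hypotheses on \(H\): chordality and claw-freeness. Chordality is exactly the assumption of the corollary. So the only thing left to check is that every line graph is claw-free, which is the standard observation of Beineke type.

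To verify claw-freeness, suppose \(L(G)\) contains an induced \(K_{1,3}\). Let its center be the edge \(e=uv\) of \(G\), and let its leaves be edges \(f_1,f_2,f_3\) of \(G\). Each \(f_i\) is adjacent to \(e\) in \(L(G)\), so each \(f_i\) contains \(u\) or \(v\). By pigeonhole, two of them, say \(f_1\) and \(f_2\), contain the same endpoint. Then \(f_1\) and \(f_2\) share a vertex of \(G\), so they are adjacent in \(L(G)\). This contradicts the fact that the leaves of an induced claw are pairwise nonadjacent. Hence \(L(G)\) is claw-free.

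With both hypotheses in hand, Theorem~\ref{thm:chordalAcyclic} applied to \(H=L(G)\) gives
\[
S_{L(G)}^{a}(1+t)=\Bigl(\prod_{j}(d_j+1)\Bigr)\operatorname{I}_{L(G)}(t;\eta)
\]
for a reverse perfect elimination ordering of \(L(G)\) with positive weights \(\eta\). Since \(L(G)\) is claw-free, Theorem~\ref{thm:weightedClawFree} shows this polynomial is real-rooted. Translating back, \(S_{L(G)}^{a}(t)\) is real-rooted. Its coefficients are nonnegative, so it has no positive zeros, and all of its zeros are real and nonpositive.

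There is no real obstacle: the whole content is the claw-freeness of line graphs. The degenerate case where \(G\) has no edges is also covered, since then \(L(G)\) is the empty graph and \(S_{L(G)}^{a}=1\).
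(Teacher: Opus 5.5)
Your proof is correct and follows essentially the same route as the paper, which simply observes that every line graph is claw-free and applies Theorem~\ref{thm:chordalAcyclic}. The only difference is that you also write out the standard pigeonhole argument for claw-freeness of line graphs, which the paper takes for granted.
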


\begin{proof}
Every line graph is claw-free, so the claim follows from
Theorem~\ref{thm:chordalAcyclic}.
\end{proof}

We finish this section by proving the weighted independence identity by inserting vertices along the elimination ordering. We keep track of sinks in an arbitrary marked set: this allows the induction to distinguish sinks adjacent to the inserted vertex from those outside its neighborhood.

For \( W \subseteq\setV(G)\), define the \defin{marked acyclic sink polynomial} to be
\[
    \defin{S_{G,W}^a (t)} \coloneqq \sum_{\mathcal{O}\in\AO(G)} (1+t)^{|\operatorname{Sink}(\mathcal{O})\cap W|}.
\]
For nonnegative weights \(\theta=\{\theta_v\}\), also define the \defin{marked independence polynomial} as
\[
    \defin{\operatorname{I}_{G, W}(t;\theta)} \coloneqq \sum_{\substack{I\in\indsets(G)\\I\subseteq W}} t^{|I|}\prod_{u\in I}\theta_u.
\]

\begin{proposition}
\label{prop:markedChordalModel}
    Let \(G\) have a reverse perfect elimination ordering.  There are \(D>0\) and nonnegative vertex weights \(\theta\) such that
    \begin{equation}
    \label{eq:markedChordalModel}
        S^a_{G,W}(t) = D \cdot \operatorname{I}_{G,W}(t;\theta)
    \end{equation}
    for every \(W \subseteq\setV(G)\) simultaneously.
\end{proposition}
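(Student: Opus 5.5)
The natural choice is \(D=\ao(G)=\prod_j(d_j+1)\) and \(\theta=\eta\) from \eqref{eq:chordalSinkWeight}. The work is in the marked version of the sink-set expansion. Expanding
\[
(1+t)^{|\operatorname{Sink}(\mathcal O)\cap W|}=\prod_{v\in W}\bigl(1+t\,\mathbf 1_{v\in\operatorname{Sink}(\mathcal O)}\bigr)
\]
and summing over \(\AO(G)\) gives \(S^a_{G,W}(t)=\sum_{I\in\indsets(G),\,I\subseteq W}\ao(G-I)\,t^{|I|}\). This is the argument of Proposition~\ref{prop:acyclicIndependentExpansion}, with the bijection between acyclic orientations having \(I\) among the sinks and \(\AO(G-I)\). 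Substituting the factorization \eqref{eq:chordalDeletionFactorization}, \(\ao(G-I)=\ao(G)\prod_{v_i\in I}\eta_i\), yields \eqref{eq:markedChordalModel} for all \(W\) at once. Claw-freeness was never used in deriving \eqref{eq:chordalDeletionFactorization}; only the reverse PEO was. The weights are positive, as already noted.

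The paper explicitly wants a proof "by inserting vertices along the elimination ordering." So I would also give, or instead give, an inductive argument on \(n\), adding \(v_n\) last with earlier neighborhood a clique \(K\) of size \(d=d_n\). Acyclic orientations of \(G\) correspond to pairs: an acyclic orientation \(\mathcal O'\) of \(G'=G-v_n\), together with a position for \(v_n\) relative to the transitive tournament that \(\mathcal O'\) induces on \(K\). There are \(d+1\) positions. In exactly one of them \(v_n\) is a sink. In exactly one, the sink of \(K\) in \(\mathcal O'\) (if it is a sink of \(G'\)) stays a sink, namely the position where \(v_n\) points toward it. Vertices outside \(K\cup\{v_n\}\) keep their sink status. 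Summing gives a recurrence expressing \(S^a_{G,W}\) through \(S^a_{G',W}\), \(S^a_{G',W\setminus K}\), and so on. One then checks that \(D\,\operatorname{I}_{G,W}(t;\theta)\), with \(D\) multiplied by \(d+1\), satisfies the same recurrence. Here one splits on whether \(v_n\in I\): if so, \(I\setminus v_n\subseteq W\setminus K\); if not, a vertex of \(I\cap K\) gets its weight scaled by \(d/(d+1)\).

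The main obstacle in the inductive route is the bookkeeping. Marking \(W\setminus K\) is what separates the sink of \(K\) from other sinks, and the factor \(d/(d+1)\) must attach to the unique \(I\)-vertex in \(K\). For this reason I would present the direct argument via the expansion and \eqref{eq:chordalDeletionFactorization} as the proof, and the recurrence as the second derivation.
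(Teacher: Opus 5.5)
Your primary argument is correct, and it takes a different route from the paper. You restrict the indicator expansion to $v\in W$, which gives $S^a_{G,W}(t)=\sum_{I\in\indsets(G),\,I\subseteq W}\ao(G-I)\,t^{|I|}$. The coefficients do not depend on $W$. So the deletion factorization \eqref{eq:chordalDeletionFactorization}, whose derivation uses only the reverse PEO and not claw-freeness, gives $D=\ao(G)$ and $\theta=\eta$ for every $W$ at once.

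The paper instead inducts along the ordering. It derives the insertion recurrence \eqref{eq:markedChordalRecurrence} and builds the weights step by step through \eqref{eq:markedCliqueScaling}--\eqref{eq:markedIndependenceRecurrence}. There the marking is essential, because the final cut destroys the sinks in $K$ and the induction must use $S^a_{H,T\setminus K}$. In your route, marking only restricts the index set.

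Your approach is shorter and gives explicit weights immediately, but it is not independent of the first proof of Theorem~\ref{thm:chordalAcyclic}. It rests on the chromatic factorization and Stanley's evaluation $\ao(G)=(-1)^n\chi_G(-1)$, so it does not supply the ``second proof'' the paper's remark advertises. The paper's insertion argument avoids chromatic polynomials altogether. It also carries over verbatim to the ascent weighting of Theorem~\ref{thm:unitIntervalAscent}, where cut $r$ contributes $q^r$. Your route would first need a $q$-analogue of \eqref{eq:chordalDeletionFactorization}, which is most naturally proved by that same insertion argument.

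Your secondary sketch contains a miscount. It is not true that the sink of $K$ keeps its status in exactly one position. In each of the $d$ positions with cut $r<d$, the sink set of $\mathcal O'$ is unchanged. This holds because $v_n\to u_d$ there, and $u_1,\dots,u_{d-1}$ were never sinks. Only in the single position where $v_n$ becomes a sink does $u_d$ lose its status. This gives \eqref{eq:markedChordalRecurrence}, with coefficient $d$ on $S^a_{G',T}$ for $T=W\setminus\{v_n\}$, and that recurrence is what your $d/(d+1)$ scaling matches. The count as you stated it would instead force a scaling of $1/(d+1)$.
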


\begin{proof}
    We derive a recurrence for the marked acyclic sink polynomial and choose weights that make the independence polynomial satisfy the same recurrence.  We then
    induct along the reverse perfect elimination ordering.
    
    Let \(v\) be the final vertex, let $H = G - v$, and put
    \[
        K = N_G(v),\qquad d=|K|,\qquad T = W \cap \setV(H).
    \]
    The set \(K\) is a clique.  Fix an acyclic orientation \(\mathcal{O}\) of $H$, and write its induced total ordering on \(K\) as
    \(u_1,\dotsc,u_d\), with \(u_i\to u_j\) for \(i<j\).  Every acyclic
    extension to \(H\) is obtained uniquely by choosing a cut \(0\leq r\leq d\)
    and directing
    \[
    u_i\longrightarrow v\quad(i\leq r),
    \qquad
    v\longrightarrow u_i\quad(i>r).
    \]
    Indeed, a direction pattern that is not a cut creates a directed triangle with \(v\) and two vertices of \(K\).  Conversely, a directed cycle through \(v\) in a cut extension would give a directed path from some \(u_j\), with \(j>r\), to some \(u_i\), with \(i\leq r\). The edge \(u_i\to u_j\) would then complete a directed cycle in $H$. If \(r<d\), the new vertex has an outgoing edge.  Among the vertices of \(K\), only \(u_d\) could have been a sink, and its new edge points toward \(u_d\). Thus the old sink set is unchanged.  If \(r=d\), the vertex \(v\) becomes a sink and every vertex of \(K\) has an outgoing edge toward it.  Summing over the cuts and then over \(\mathcal{O}\) gives
    \begin{equation}
    \label{eq:markedChordalRecurrence}
        S^a_{G,W}(t) = d \cdot S^a_{H,T}(t) +\bigl(1 + t \cdot \mathbf{1}_{v\in W} \bigr) \cdot S^a_{H, T\setminus K}(t).
    \end{equation}
    This also covers \(d=0\), when only the final cut occurs.
    
    For the empty graph, take \(D=1\).  Suppose inductively that \eqref{eq:markedChordalModel} holds on \(H\) with normalization \(D\) and weights \(\theta\).  Put \(c = d+1\), and define weights \(\alpha\) on \(H\) by
    \[
        \alpha_u =
        \begin{cases}
            \dfrac{d}{c}\theta_u,&u\in K,\\[4pt]
            \theta_u,&u\notin K.
        \end{cases}
    \]
    Extend these weights to \(G\) by setting
    \[
        \theta'_v=\frac1c, \qquad \theta'_u=\alpha_u\quad(u\ne v),
    \]
    and put \(D'=cD\).  Since an independent set contains at most one vertex of
    the clique \(K\), splitting according to whether it meets \(K\) gives
    \begin{equation}
    \label{eq:markedCliqueScaling}
        c \cdot \operatorname{I}_{H,T}(t;\alpha) = d \cdot \operatorname{I}_{H,T}(t;\theta) + \operatorname{I}_{H,T\setminus K}(t;\theta).
    \end{equation}
    The weighted independence-polynomial recurrence at \(v\) is
    \begin{equation}
    \label{eq:markedIndependenceRecurrence}
        \operatorname{I}_{H,W}(t;\theta') = \operatorname{I}_{H,T}(t;\alpha) + \mathbf{1}_{v\in W} \cdot \frac{t}{c} \operatorname{I}_{H,T\setminus K}(t;\theta).
    \end{equation}
    The weights \(\alpha\) and \(\theta\) agree outside \(K\), which explains the last term.  Write
    \[
        P = \operatorname{I}_{H,T}(t;\theta), \qquad Q = \operatorname{I}_{H,T\setminus K}(t;\theta).
    \]
    Equations \eqref{eq:markedCliqueScaling} and \eqref{eq:markedIndependenceRecurrence}, together with the induction hypothesis, give
    \begin{align*}
        D' \cdot \operatorname{I}_{G,W}(t;\theta') &=D \cdot \left[ (d \cdot P + Q) + \mathbf{1}_{v\in W} \cdot tQ \right]\\
                                                   &=d \cdot S^a_{H,T}(t) + \bigl( 1 + t \cdot \mathbf{1}_{v\in W} \bigr) \cdot S^a_{H, T \setminus K}(t)\\
                                                   &=S^a_{G,W}(t).
    \end{align*}
    The last equality is \eqref{eq:markedChordalRecurrence}, completing the
    induction for every marked set \(S\).
\end{proof}

\begin{remark}
    Taking \(W=\setV(G)\) in Proposition~\ref{prop:markedChordalModel} gives a second proof of the real-rootedness assertion in Theorem~\ref{thm:chordalAcyclic}. Unwinding the induction recovers the normalization and weights in \eqref{eq:chordalSinkIdentity} and \eqref{eq:chordalSinkWeight}.
\end{remark}

\subsection{The minima polynomial}
\label{sec:minima}

We turn to local orders on the edges of a root graph \(G\).  Their minima
will give the sinks of line-graph orientations in Section~\ref{sec:forests}.
For \(v\in\setV(G)\), let \(\defin{\setE(v)}\) denote the incident edges.
The empty set has one total order.

\begin{definition}
\label{def:minimaPolynomial}
    Choose a total order on \(\setE(v)\) independently for every \(v\in\setV(G)\). A \defin{local edge order} on $G$ is a tuple of total orders in each vertex \(v\in\setV(G)\).  An edge \(uv\in\setE(G)\) is \defin{jointly minimal} in a local edge order if it is the minimum edge at both \(u\) and \(v\) in the same local edge order. The \defin{minima polynomial} of \(G\) is
    \[
        \defin{N_G(t)} \coloneqq \sum_{\text{local edge orders}} t^{\#\{\text{jointly minimal edges}\}}.
    \]
\end{definition}

\begin{example}
\label{ex:K13}
    Let \(G=K_{1,3}\).  The center admits \(3!\) orders, while each leaf admits one.  In every choice, exactly one edge is jointly minimal. Therefore
    \[
        N_{K_{1,3}}(t)=6t.
    \]
    Figure~\ref{fig:localOrders} shows one of these choices and the line-graph orientation it induces.  The jointly minimal edge becomes the unique sink.
        
    \begin{figure}[!ht]
        \centering
        \begin{tikzpicture}[vertex/.style={circle,fill=black,inner sep=1.6pt}, baseline=(current bounding box.center)]
            \node[vertex] (v) at (0,0) {};
            \node[vertex] (a) at (-1,-0.7) {};
            \node[vertex] (b) at (1,-0.7) {};
            \node[vertex] (c) at (0,1.1) {};
            \draw (v) -- node[above left] {\(a\)} (a);
            \draw (v) -- node[above right] {\(b\)} (b);
            \draw (v) -- node[right] {\(c\)} (c);
        \end{tikzpicture}
        \qquad\(\longmapsto\)\qquad
        \begin{tikzpicture}[vertex/.style={circle,fill=black,inner sep=1.6pt}, >=Stealth,baseline=(current bounding box.center)]
            \node[vertex,label=below:\(a\)] (a) at (-1,0) {};
            \node[vertex,label=below:\(b\)] (b) at (1,0) {};
            \node[vertex,label=above:\(c\)] (c) at (0,1.5) {};
            \draw[postaction={decorate},decoration={markings,mark=at position 0.6 with {\arrow{>}}}] (b) -- (a);
            \draw[postaction={decorate},decoration={markings,mark=at position 0.6 with {\arrow{>}}}] (c) -- (a);
            \draw[postaction={decorate},decoration={markings,mark=at position 0.6 with {\arrow{>}}}] (c) -- (b);
        \end{tikzpicture}
        \caption{The local order \(a<b<c\) at the center of \(K_{1,3}\) directs each edge of its line graph toward the smaller endpoint.  The edge \(a\) is jointly minimal in the star and is a sink in the triangle.}
        \label{fig:localOrders}
    \end{figure}
\end{example}

\begin{theorem}
\label{thm:minimaMatching}
    For every graph \(G\),
    \begin{equation}
    \label{eq:minimaExpansion}
        N_G(t) = \sum_{M\in\matchings(G)}(t-1)^{|M|} \prod_{v\notin\setV(M)}d_G(v)! \prod_{v\in\setV(M)}(d_G(v)-1)!.
    \end{equation}
    Equivalently,
    \begin{equation}
    \label{eq:minimaMatching}
        \frac{N_G(1+t)}{\prod_{v\in\setV(G)}d_G(v)!} = \mu_G\left(t;\left\{\frac{1}{d_G(u)d_G(v)}\right\}_{uv\in\setE(G)} \right).
    \end{equation}
    Consequently, \(N_G(t)\) is real-rooted.
\end{theorem}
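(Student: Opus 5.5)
The plan mirrors the sink-set expansion of Proposition~\ref{prop:acyclicIndependentExpansion}, with local edge orders in place of acyclic orientations. Write
\[
t^{\#\{\text{jointly minimal edges}\}}=\prod_{e\in\setE(G)}\bigl(1+(t-1)\mathbf{1}_{e\text{ jointly minimal}}\bigr)
\]
and expand the product. This expresses \(N_G(t)\) as a sum over edge sets \(F\subseteq\setE(G)\). The term for \(F\) is \((t-1)^{|F|}\) times the number of local edge orders in which every edge of \(F\) is jointly minimal.

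First, we restrict the sum to matchings. Each vertex has exactly one minimum edge in its local order. Two jointly minimal edges sharing a vertex \(v\) would both have to be the minimum at \(v\), which is impossible. So the count vanishes unless \(F=M\) is a matching.

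Second, we count the local orders for a fixed matching \(M\). The condition ``every edge of \(M\) is jointly minimal'' decouples over vertices, because the orders at different vertices are chosen independently.
\begin{itemize}
\item A vertex \(v\in\setV(M)\) is covered by a unique edge of \(M\). That edge must come first in the order on \(\setE(v)\), and the rest is arbitrary, giving \((d_G(v)-1)!\) orders. Note that \(d_G(v)\geq1\) here.
\item A vertex \(v\notin\setV(M)\) is unconstrained and contributes \(d_G(v)!\). This includes isolated vertices, which contribute \(0!=1\).
\end{itemize}
Multiplying these factors gives \eqref{eq:minimaExpansion}.

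Third, we derive \eqref{eq:minimaMatching}. Substitute \(t\mapsto 1+t\) and divide by \(\prod_v d_G(v)!\). Each covered vertex then contributes a factor \(1/d_G(v)\). Each edge \(uv\in M\) covers exactly its two endpoints, and distinct edges of \(M\) cover disjoint pairs. So the product regroups as \(\prod_{uv\in M} 1/(d_G(u)d_G(v))\), which is exactly the weighted matching polynomial with \(\omega_{uv}=1/(d_G(u)d_G(v))>0\).

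Finally, Theorem~\ref{thm:heilmannLieb} shows that this polynomial has only real zeros. Translating back by \(t\mapsto t-1\) preserves real-rootedness, so \(N_G(t)\) is real-rooted.

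I do not expect a real obstacle. The only point needing care is the decoupling in the second step: joint minimality is a condition at two vertices, but it is a conjunction of independent single-vertex conditions. Together with the automatic vanishing for non-matchings, this is what makes the weights factor.
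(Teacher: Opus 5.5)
Your proposal is correct and follows the paper's proof essentially step for step. The paper also expands over edge indicators, restricts the sum to matchings, counts the local orders vertex by vertex, regroups the factors \(1/d_G(v)\) into the edge weights \(1/(d_G(u)d_G(v))\), and then applies Heilmann--Lieb.
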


\begin{proof}
    For a fixed local edge order \(e\in\setE(G)\), let \(I_e\) be the indicator that \(e\) is jointly minimal.  Then
    \[
        t^{\#\{\text{jointly minimal edges}\}} = \prod_{e\in\setE(G)}\bigl(1+(t-1)I_e\bigr).
    \]
    After expanding the product and summing over local edge orders, the coefficient of \((t-1)^{|F|}\), \(F\subseteq\setE(G)\), is the number of local edge orders in which every edge of \(F\) is jointly minimal.  Two edges sharing a vertex cannot both be minimal there, so only matchings contribute.
    
    Conversely, let \(F=M\) be a matching.  At a vertex in \(v \in \setV(M)\), the edge of \(M\) incident to $v$ is forced to be the local minimum, leaving \((d_G(v)-1)!\) possible total orders for $E(v)$.  At a vertex \(v \notin \setV(M)\), all \(d_G(v)!\) total orders of $E(v)$ are allowed.  The choices at distinct vertices are independent, which gives \eqref{eq:minimaExpansion}.  Notice that this formula also covers isolated vertices: they occur only in the first product and contribute \(0!=1\).
    
    Factoring \(\prod_v d_G(v)!\) from \eqref{eq:minimaExpansion} and replacing \(t\) by \(1+t\) gives \eqref{eq:minimaMatching}.  Every edge has endpoints of positive degree, so all weights in \eqref{eq:minimaMatching} are positive. Theorem~\ref{thm:heilmannLieb} proves the real-rootedness assertion.
\end{proof}

\begin{example}
    Let $G = K_{1,3}$. From Example \ref{ex:K13}, $N_{K_{1,3}}(1+t) = 6 + 6t$. Since $\mathcal{M}(K_{1,3}) = \{ \varnothing, \{a\}, \{b\}, \{c\} \}$ and the degree of the central vertex of $K_{1,3}$ is $3$, while the degrees of the other vertices are $1$, see Figure \ref{fig:localOrders}, we have
    \[
        \mu_{K_{1,3}}\left(t;\left\{\frac{1}{d_G(u)d_G(v)}\right\}_{uv\in\setE(G)} \right) = 1  + 3 \cdot \frac{t}{3} = \frac{N_{K_{1,3}}(1+t)}{3!}.
    \]
\end{example}

In Section \ref{sec:forests}, we prescribe a set of directions $F$ of a graph $G$, and compute its acyclic sink polynomial over all the acyclic orientations of $G$ that contain $F$. It turns out that prescribed directions in a line graph impose comparisons among the root edges at each vertex.  We prepare for this restriction by allowing local partial orders.

\begin{definition}
\label{def:constrainedMinima}
    Choose a partial order $P_v$ on \(\setE(v)\) independently for every \(v\in\setV(G)\). A \defin{local edge order extending $\{P_v\}$} on $G$ is a tuple of total orders extending each $P_v$, \(v\in\setV(G)\). The \defin{constrained minima polynomial} is
    \[
        \defin{N_G(t;\{P_v\})} \coloneqq \sum_{\substack{\text{local edge order}\\\text{extending } \{P_v\}}} t^{\#\{\text{jointly minimal edges}\}}.
    \]
    Let \(\defin{\Lambda_v}\) be the number of linear extensions of \(P_v\), and let \(\defin{\lambda_v(e)}\) be the number of these extensions in which the incident edge \(e\) is the minimum element.
\end{definition}

\begin{theorem}
\label{thm:constrainedMinimaMatching}
    For a graph \(G\) with local partial orders \(\{P_v\}\),
    \begin{align}
    \label{eq:constrainedExpansion}
        N_G(t;\{P_v\}) = {}&\sum_{M\in\matchings(G)}(t-1)^{|M|} \prod_{v\notin\setV(M)}\Lambda_v \prod_{uv\in M}\lambda_u(uv)\lambda_v(uv).   
    \end{align}
    In particular,
    \begin{equation}
    \label{eq:constrainedMatching}
        \frac{N_G(1+t;\{P_v\})}{\prod_{v\in\setV(G)}\Lambda_v} = \mu_G\left(t;\left\{\frac{\lambda_u(uv)\lambda_v(uv)}{\Lambda_u\Lambda_v} \right\}_{uv\in\setE(G)} \right),
    \end{equation}
    and \(N_G(t;\{P_v\})\) is real-rooted.
\end{theorem}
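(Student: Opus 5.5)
The plan is to follow the proof of Theorem~\ref{thm:minimaMatching}, using the indicator expansion over jointly minimal edges, with one change: the counts at each vertex are now taken over linear extensions of \(P_v\) rather than over all total orders of \(\setE(v)\).

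First, fix a local edge order extending \(\{P_v\}\), and for each \(e\in\setE(G)\) let \(I_e\) indicate that \(e\) is jointly minimal. Then
\[
t^{\#\{\text{jointly minimal edges}\}}=\prod_{e\in\setE(G)}\bigl(1+(t-1)I_e\bigr).
\]
Expanding and summing over all extending local orders, the coefficient of \((t-1)^{|F|}\) is the number of local orders extending \(\{P_v\}\) in which every edge of \(F\) is jointly minimal. If two edges of \(F\) share a vertex, they cannot both be the minimum there, so the count is zero unless \(F=M\) is a matching.

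Second, for a matching \(M\), the choices at distinct vertices are independent, so the count factors over vertices:
\begin{itemize}
\item At \(v\notin\setV(M)\), any of the \(\Lambda_v\) linear extensions of \(P_v\) is allowed.
\item At \(v\in\setV(M)\), with \(uv\) the matching edge at \(v\), the extension must have \(uv\) as its minimum. By definition there are \(\lambda_v(uv)\) such extensions.
\end{itemize}
Grouping the factors for the two endpoints of each edge of \(M\) gives \eqref{eq:constrainedExpansion}. Two degenerate situations need a check:
\begin{itemize}
\item \(\lambda_v(uv)=0\) can occur when \(uv\) is not minimal in \(P_v\). The corresponding terms then vanish, which is the correct count.
\item An isolated vertex has \(\Lambda_v=1\), since the empty set has one total order.
\end{itemize}

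Third, every \(\Lambda_v\geq 1\), because a partial order always has a linear extension. So we may divide \eqref{eq:constrainedExpansion} by \(\prod_v\Lambda_v\) and substitute \(t\mapsto 1+t\). Each matching term becomes
\[
t^{|M|}\prod_{uv\in M}\frac{\lambda_u(uv)\lambda_v(uv)}{\Lambda_u\Lambda_v},
\]
because every vertex of \(\setV(M)\) lies on exactly one edge of \(M\). This gives \eqref{eq:constrainedMatching}.

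The edge weights are nonnegative but may be zero. Theorem~\ref{thm:heilmannLieb} permits \(\omega_e\geq 0\), so the weighted matching polynomial has only real nonpositive zeros. Translating the variable back preserves real-rootedness, so \(N_G(t;\{P_v\})\) is real-rooted. The polynomial is nonzero, since its constant term in \(1+t\) equals \(\prod_v\Lambda_v>0\).

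I expect no serious obstacle. The only points needing care are the bookkeeping that assigns both endpoint factors to each matching edge, and the observation that zero weights (edges that can never be jointly minimal) are harmless in Heilmann--Lieb. Such edges can also simply be deleted from \(G\) without changing \(\mu_G\).
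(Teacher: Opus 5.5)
Your proposal is correct and follows essentially the same route as the paper: the indicator expansion restricted to matchings, per-vertex counts $\Lambda_v$ and $\lambda_v(uv)$ multiplied by independence of the local choices, normalization using $\Lambda_v>0$, and Heilmann--Lieb with nonnegative weights. Your extra checks on zero weights and isolated vertices are accurate refinements that the paper leaves implicit.
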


\begin{proof}
    We repeat the indicator expansion from the proof of Theorem~\ref{thm:minimaMatching}. Only matchings contribute. If \(M\in\matchings(G)\), then an endpoint \(v\) of \(uv\in M\) admits \(\lambda_v(uv)\) extensions in which \(uv\) is minimal.  A vertex not covered by \(M\) admits all \(\Lambda_v\) linear extensions.  Independence of the local choices proves \eqref{eq:constrainedExpansion}.
    
    Every finite partial order has a linear extension, so \(\Lambda_v>0\). Factoring \(\prod_v\Lambda_v\) and replacing \(t\) by \(1+t\) gives \eqref{eq:constrainedMatching}. Its edge weights are nonnegative, and Theorem~\ref{thm:heilmannLieb} completes the proof.
\end{proof}

\subsection{Forests}
\label{sec:forests}

We now determine when the local edge orders counted by \(N_G(t)\) produce precisely the acyclic orientations of \(L(G)\). A local edge order on \(G\) determines an orientation of \(L(G)\) as follows: if \(e\) and \(f\) meet at \(v\), we direct the line-graph edge \(\{e,f\}\) toward the smaller edge in the local order at \(v\). This is well-defined because two edges of a simple graph have at most one common endpoint.

\begin{definition}
\label{def:locallyTransitive}
    An orientation of \(L(G)\) is \defin{locally transitive} if, for every \(v\in\setV(G)\), its restriction to the clique on \(\setE(v)\) is transitive.
\end{definition}

\begin{lemma}\label{lem:sinksAreMinima}
    Under this construction, \(e=uv\in\setE(G)\) is a sink of \(L(G)\) if and only if \(e\) is minimal in both \(\setE(u)\) and \(\setE(v)\).
\end{lemma}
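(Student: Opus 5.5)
The plan is to unwind the definition of a sink in \(L(G)\) and compare it, one endpoint at a time, with the local orders. I would first record the neighborhood of \(e=uv\) in \(L(G)\). Its neighbors are exactly the edges \(f\neq e\) sharing an endpoint with \(e\). Since \(G\) is simple, each such \(f\) shares exactly one endpoint with \(e\), so these neighbors split as a disjoint union
\[
(\setE(u)\setminus\{e\}) \sqcup (\setE(v)\setminus\{e\}).
\]
This is the same observation the paper used to see that the construction is well defined.

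Next I would determine the direction of each incident edge of \(L(G)\). For \(f\in\setE(u)\setminus\{e\}\), the line-graph edge \(\{e,f\}\) is oriented using the local order at the unique common endpoint \(u\). By the construction, it points toward \(e\) if and only if \(e<f\) in the order on \(\setE(u)\). The symmetric statement holds at \(v\).

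Then \(e\) is a sink precisely when every incident line-graph edge points toward it. By the two previous steps, this is equivalent to \(e<f\) for all \(f\in\setE(u)\setminus\{e\}\) and \(e<f'\) for all \(f'\in\setE(v)\setminus\{e\}\). That is, \(e\) is the minimum of \(\setE(u)\) and the minimum of \(\setE(v)\), in other words jointly minimal. Both directions of the equivalence come out of the same chain, so no separate converse argument is needed.

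The only point requiring care is the degenerate case in which \(e\) is the only edge at an endpoint. Then that endpoint imposes no condition, and \(e\) is trivially the minimum there, so the equivalence still holds. The case where \(e\) is isolated in \(L(G)\) is consistent with the convention that isolated vertices are sinks. I do not expect any real obstacle beyond stating the disjointness of the two neighbor sets cleanly.
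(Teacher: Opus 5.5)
Your proposal is correct and follows the same route as the paper's proof. Both arguments identify the line-graph neighbors of \(e\) as the other edges at \(u\) or \(v\), orient each one by the local order at the shared endpoint, and conclude that \(e\) is a sink exactly when it lies below every other edge at both endpoints. Your remarks on the disjointness of the two neighbor sets and on degree-one endpoints only spell out what the paper leaves implicit.
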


\begin{proof}
    Every line-graph edge incident to \(e\) comes from an edge sharing either \(u\)
    or \(v\) with \(e\).  All these edges point toward \(e\) precisely when \(e\)
    is smaller than every other edge at both endpoints.  This is the stated
    condition.
\end{proof}

\begin{proposition}\label{prop:localOrders}
    The local-order construction is a bijection between local edge orders on \(G\) and locally transitive orientations of \(L(G)\).  Under this bijection, joint minima correspond to sinks.  Hence
    \[
        N_G(t) = \sum_{\substack{\mathcal{O} \in \Or(L(G)) \\\text{locally transitive}}} t^{\sink(\mathcal{O})}.
    \]
\end{proposition}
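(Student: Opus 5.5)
The plan is to build the bijection explicitly in both directions and then use Lemma~\ref{lem:sinksAreMinima} for the sink statement.

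\emph{Forward map.} Start with a local edge order $(<_v)_{v\in\setV(G)}$. Each edge $\{e,f\}$ of $L(G)$ comes from a unique common endpoint $v$ of $e$ and $f$, because $G$ is simple. Orient $\{e,f\}$ toward the $<_v$-smaller of $e$ and $f$. So the orientation is well defined. Restricted to the clique on $\setE(v)$, the orientation is exactly the tournament of the total order $<_v$ (pointing toward smaller elements), and that tournament is transitive. Hence the image is locally transitive.

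\emph{Inverse map.} Start with a locally transitive orientation $\mathcal O$ of $L(G)$. For each $v$, define $e<_v f$ if and only if $f\to e$ in $\mathcal O$, for distinct $e,f\in\setE(v)$. This relation is total and antisymmetric, since each pair in the clique is joined by exactly one oriented edge. It is transitive by the locally transitive hypothesis. So $<_v$ is a total order on $\setE(v)$, and we obtain a local edge order.

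\emph{The two maps are mutually inverse.} Every edge of $L(G)$ lies in exactly one clique $\setE(v)$, again by simplicity. The edge sets of the cliques $\setE(v)$ therefore partition $\setE(L(G))$. An orientation is thus determined by its restrictions to these cliques, and a local edge order is determined by its components. On each single clique, the two constructions are plainly inverse. The isolated-vertex and degree-one cases are trivial, since there the clique has no edges and $\setE(v)$ has exactly one total order.

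\emph{Sinks and the identity.} By Lemma~\ref{lem:sinksAreMinima}, $e=uv$ is a sink of the image orientation if and only if $e$ is minimal in both $\setE(u)$ and $\setE(v)$, which is exactly joint minimality. Summing $t^{\#\text{jointly minimal}}$ over local edge orders and transporting the sum along the bijection gives the displayed formula for $N_G(t)$.

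None of these steps should be a real obstacle. The only point needing care is the partition of $\setE(L(G))$ into the cliques $\setE(v)$, which is what makes the choices at different vertices independent.
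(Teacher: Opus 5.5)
Your proposal is correct and follows the same approach as the paper: you orient each clique on \(\setE(v)\) by its local order, recover a total order from each transitive tournament, use simplicity of \(G\) so that every edge of \(L(G)\) lies in exactly one such clique (hence the two constructions are mutually inverse), and then invoke Lemma~\ref{lem:sinksAreMinima} for the sink statement. You fill in a few details the paper leaves implicit, namely totality, antisymmetry and transitivity of the recovered relation, and the trivial cases \(d_G(v)\leq 1\).
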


\begin{proof}
    A local edge order orients the clique on \(\setE(v)\) transitively.  Conversely, a transitive orientation of a complete graph determines a unique total order, with each edge directed toward the smaller endpoint.  Every edge of \(L(G)\) belongs to the clique associated with its unique common endpoint in \(G\).
    The constructions are therefore inverse.  The assertion about the statistics follows from Lemma~\ref{lem:sinksAreMinima}.
\end{proof}

The maximal cliques of the line graph of a forest have a particularly simple
form.  We use \defin{block} to mean a maximal 2-connected subgraph, a bridge
with its endpoints, or an isolated vertex.  A \defin{block graph} is a graph
whose blocks are cliques.

\begin{lemma}\label{lem:forestBlockGraph}
Let \(T\) be a forest.  Every block of \(L(T)\) is a clique consisting of the
edges of \(T\) incident to one vertex.  In particular, \(L(T)\) is a block
graph.
\end{lemma}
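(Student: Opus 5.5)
The plan is to show two things. First, every cycle of \(L(T)\) lies inside one of the cliques \(C_v=\setE(v)\), \(v\in\setV(T)\). Second, these cliques are 2-connected, or are single edges or single vertices, and any two of them meet in at most one vertex of \(L(T)\). From these facts the blocks are exactly the maximal sets \(C_v\).

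The key step is the cycle claim. Let \(e_1e_2\cdots e_ke_1\) be a cycle in \(L(T)\) with \(k\ge3\) and distinct \(e_i\). Consecutive edges \(e_i,e_{i+1}\) share a unique endpoint \(x_i\) in \(T\). Suppose not all the \(x_i\) coincide. Then, because each \(e_{i}\) has only two endpoints, the sequence \(x_1,x_2,\dots,x_k\) moves along edges of \(T\): whenever \(x_{i-1}\ne x_i\), these two vertices are the two endpoints of \(e_i\). Delete consecutive repetitions and close up cyclically. This gives a closed walk in \(T\) that uses the edges \(e_i\) at which the common vertex changes. Each such \(e_i\) is traversed exactly once, since the \(e_i\) are distinct. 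A closed walk in which some edge is used exactly once lies on a cycle containing that edge, which is impossible in a forest. There is one degenerate case: the walk has fewer than two steps. A closed walk cannot take exactly one step, so the walk has at least two steps whenever some change occurs. Hence all \(x_i\) equal a single vertex \(v\), and every \(e_i\) lies in \(C_v\).

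Now deduce the block structure. Two sets \(C_u\) and \(C_v\) with \(u\ne v\) intersect in at most the edge \(uv\), so they share at most one vertex of \(L(T)\). Each \(C_v\) is a clique. It is therefore 2-connected when \(|C_v|\ge3\), a bridge when \(|C_v|=2\), and a single vertex when \(|C_v|=1\). A maximal \(C_v\) is contained in some block \(B\). Conversely, take any two vertices \(e,f\) of a 2-connected block \(B\). They lie on a common cycle, and by the key step that cycle lies in a single \(C_v\). Every edge of \(L(T)\) lies in some \(C_v\), so a block that is a bridge lies in some \(C_v\) as well.

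It remains to show that a 2-connected block is contained in a single \(C_v\). Take adjacent \(e,f\) in \(B\) with common vertex \(v\). Any other vertex \(g\) of \(B\) lies on a cycle through the edge \(ef\), so \(g\in C_v\). Isolated vertices of \(L(T)\) are edges of \(T\) forming their own components; such an edge \(e=uv\) satisfies \(C_u=C_v=\{e\}\). So each block is some \(C_v\), every block is a clique, and \(L(T)\) is a block graph.

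The main obstacle is making the walk argument in the key step rigorous. In particular, the claim that an edge used exactly once in a closed walk lies on a cycle needs care. I would handle it directly: remove the edge \(e_i=x_{i-1}x_i\) from the walk. What remains is a walk from \(x_i\) back to \(x_{i-1}\) that avoids \(e_i\). This contradicts \(e_i\) being a bridge, and every edge of a forest is a bridge.
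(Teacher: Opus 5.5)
Your argument is correct, but it runs in the opposite direction from the paper's.

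The paper works with cut vertices. For vertices of degree at least two, $C_u$ and $C_v$ meet only when $uv\in \setE(T)$. In that case the line-graph vertex $uv$ separates the cliques on the two sides of the root edge $uv$, so no 2-connected subgraph crosses it, and each $C_v$ with at least two elements is a block. The statement that every cycle of $L(T)$ lies in one $C_v$ is only extracted later, in the proof of Theorem~\ref{thm:forestEquality}, from the general fact that a cycle lies in a single block.

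You prove that cycle statement first. You project a cycle $e_1\cdots e_k$ of $L(T)$ to the closed trail $x_1,\dots,x_k$ in $T$ and use that every edge of a forest is a bridge. You then recover the blocks from the fact that in a 2-connected graph every vertex lies on a cycle through any given edge.

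Your route makes self-contained the fact actually used downstream. The projection step also works for any root graph: a cycle of $L(G)$ not inside one star yields a closed trail, hence a cycle, in $G$. The paper's route is shorter and needs only separation by cut vertices, rather than the Menger-type cycle property of 2-connected graphs.

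Two small steps should be written out. First, in ``so $g\in C_v$'': the cycle through $e$, $f$, $g$ lies in some $C_w$ containing both $e$ and $f$, which forces $w=v$. Second, the containment $B\subseteq C_v$ becomes equality because $C_v$, being a clique, lies in some block that shares at least two vertices with $B$, and distinct blocks share at most one vertex.
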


\begin{proof}
For \(v\in\setV(T)\), let \(C_v\) be the clique on the edges incident to
\(v\).  First consider only vertices with \(d_T(v)\geq2\).  Two of their
cliques meet exactly when their indexing vertices are adjacent in \(T\), and
then their intersection is the single line-graph vertex corresponding to that
root edge.  Thus the intersection graph of these cliques is a forest.
Moreover, a line-graph vertex belongs to at most two of them.

If \(C_u\) and \(C_v\) meet, deleting their common vertex separates the
cliques on the two sides of the indexing edge \(uv\).  Hence no 2-connected
subgraph can cross that intersection.  Each \(C_v\) with at least two vertices
is consequently a block, including the bridge case \(|C_v|=2\).
The only line-graph vertices not covered by these cliques come from isolated
edges of \(T\).  Each is an isolated block and equals the one-element star
clique at either root endpoint.  This proves the assertion.
\end{proof}

We write \(\defin{f\leq_{\mathrm{coeff}}g}\) when every coefficient of \(g-f\) is
nonnegative.

\begin{theorem}\label{thm:forestEquality}
    For every graph \(G\),
    \[
        S_{L(G)}^{a}(t)\leq_{\mathrm{coeff}} N_G(t).
    \]
    Equality holds if and only if \(G\) is a forest.  In particular, if \(T\) is
    a forest, then
    \[
        N_T(t)=S_{L(T)}^{a}(t),
    \]
    and this polynomial is real-rooted.
\end{theorem}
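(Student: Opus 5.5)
Every acyclic orientation of \(L(G)\) is locally transitive, since an acyclic orientation of a clique is transitive. Proposition~\ref{prop:localOrders} therefore identifies \(\AO(L(G))\) with a subset of the locally transitive orientations and preserves sinks. This gives \(S_{L(G)}^{a}(t)\leq_{\mathrm{coeff}} N_G(t)\) termwise. More precisely, \(N_G(t)-S^a_{L(G)}(t)\) is the sink generating function of the locally transitive orientations that contain a directed cycle, and this difference has nonnegative coefficients. Equality holds if and only if every locally transitive orientation of \(L(G)\) is acyclic. Since both sides have nonnegative coefficients, it suffices to compare the values at \(t=1\).

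For the forest direction, take a locally transitive orientation of \(L(T)\) and suppose it has a directed cycle. By Lemma~\ref{lem:forestBlockGraph}, every block of \(L(T)\) is a star clique \(C_v\). A cycle in any graph lies inside a single block, because it is \(2\)-connected or a single edge repeated, and simple cycles of length at least three are \(2\)-connected. So the directed cycle lies in some \(C_v\). The restriction to \(C_v\) is transitive, hence a total order, hence acyclic, which is a contradiction. Thus every locally transitive orientation is acyclic, equality holds, and real-rootedness follows from Theorem~\ref{thm:minimaMatching}.

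For the converse, suppose \(G\) contains a cycle, and choose a shortest one, \(v_1v_2\cdots v_kv_1\) with \(k\ge 3\) and edges \(e_i=v_iv_{i+1}\), indices mod \(k\). At the vertex \(v_{i+1}\), the edges \(e_i\) and \(e_{i+1}\) are both incident. We choose the local order at \(v_{i+1}\) so that \(e_{i+1}<e_i\); at the remaining vertices the orders are arbitrary. This is possible because each cycle vertex constrains only the pair of cycle edges meeting there. The construction directs the line-graph edge \(e_ie_{i+1}\) from \(e_i\) toward \(e_{i+1}\), so \(e_1\to e_2\to\cdots\to e_k\to e_1\) is a directed cycle in \(L(G)\). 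It is a genuine cycle: the \(e_i\) are distinct, and consecutive ones are adjacent in \(L(G)\) through their shared vertex, which is well defined since \(G\) is simple. Hence some locally transitive orientation is not acyclic. It contributes \(t^{j}\) to \(N_G\) and not to \(S^a_{L(G)}\), so \(N_G(1)>S^a_{L(G)}(1)\) and equality fails.

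The only delicate point is the block argument in the forest case, namely that every directed cycle of \(L(T)\) lies inside one block clique. This is the standard fact that cycles are \(2\)-connected, combined with Lemma~\ref{lem:forestBlockGraph}. The rest is bookkeeping through Proposition~\ref{prop:localOrders}.
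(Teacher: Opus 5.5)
Your proof is correct and follows the paper's argument. The inequality comes from Proposition~\ref{prop:localOrders} applied to acyclic, hence locally transitive, orientations. The converse forces \(e_{i+1}<e_i\) at each cycle vertex to produce the directed cycle \(e_1\to\cdots\to e_k\to e_1\). The forest case places any directed cycle inside a single transitively oriented block \(C_v\) via Lemma~\ref{lem:forestBlockGraph}. The only differences are cosmetic: you cite the \(2\)-connectivity of cycles directly rather than first passing to a minimum-length chordless directed cycle, and choosing a \emph{shortest} root cycle is harmless but unnecessary.
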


\begin{proof}
Every acyclic orientation of \(L(G)\) restricts to a transitive orientation on
each clique \(\setE(v)\).  Proposition~\ref{prop:localOrders} therefore shows
that \(N_G(t)-S_{L(G)}^{a}(t)\) counts the cyclic locally transitive
orientations of \(L(G)\) by sinks.  This proves the coefficientwise
inequality.

Suppose that \(G\) contains a cycle with consecutive edges
\(e_1,e_2,\dotsc,e_k\), with indices read cyclically.  At the common endpoint $v_i$
of \(e_i\) and \(e_{i+1}\), choose a partial order $P_{v_i}$ for $E(v_i)$ in which
\(e_{i+1}<e_i\), and extend these orderings arbitrarily to a local edge
order.  The resulting orientation of \(L(G)\) contains the directed cycle
\[
    e_1\longrightarrow e_2\longrightarrow\dotsb\longrightarrow e_k \longrightarrow e_1.
\]
It is locally transitive by construction.  Thus the coefficientwise
inequality is strict whenever \(G\) is not a forest.

It remains to prove equality for a forest \(T\). For a vertex $v$, an ordering of $E(v)$ orients the clique \(C_v\) transitively.  We first show
that the resulting orientation of \(L(T)\) is acyclic.  If it contained a
directed cycle, choose one of minimum length.  Its underlying cycle has no
chord, otherwise there would be an orientation of a chord which would produce a shorter directed cycle.
By Lemma~\ref{lem:forestBlockGraph}, every cycle of \(L(T)\) belongs to one
clique \(C_v\).  The chosen directed cycle would therefore lie in a
transitively oriented clique, which is impossible.

Proposition~\ref{prop:localOrders} now gives the polynomial identity, and
Theorem~\ref{thm:minimaMatching} proves real-rootedness.
\end{proof}

The \defin{matching number} of a graph $G$ is defined to be
\[
    \nu(G) \coloneqq \max \{ |M| \colon M \in \mathcal{M}(G)\}.
\]
\begin{proposition}
\label{prop:degreeEvaluations}
    For any graph $G$,
    \[
        \deg N_G(t) = \deg S_{L(G)}^{a}(t) = \deg S_{L(G)}(t) = \nu(G).
    \]
    We also have
    \[
    N_G(1)=\prod_{v\in\setV(G)}d_G(v)!
    \qquad\text{and}\qquad
    S_{L(G)}(1)
    =2^{\sum_{v\in\setV(G)}\binom{d_G(v)}{2}}.
    \]
    If \(G\) is a forest, then also
    \[
    S_{L(G)}^{a}(1)=\prod_{v\in\setV(G)}d_G(v)!.
    \]
\end{proposition}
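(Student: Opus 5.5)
The plan is to read off all three degrees from the indicator expansions already available. The key fact is that sinks of $L(G)$ correspond to an independent set of $L(G)$, that is, a matching of $G$. Jointly minimal edges likewise form a matching. This gives an upper bound $\nu(G)$ on each degree, and we then exhibit a single configuration achieving $\nu(G)$.

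\emph{Upper bounds.} In any orientation of $L(G)$, two adjacent line-graph vertices cannot both be sinks, since the edge between them points toward only one of them. Hence $\operatorname{Sink}(\mathcal O)$ is a matching of $G$, and $\sink(\mathcal O)\le \nu(G)$ for all orientations, acyclic or not. For $N_G$, two jointly minimal edges cannot share an endpoint $v$, because only one edge is minimal in $\setE(v)$. So the jointly minimal edges form a matching as well. Therefore all three degrees are at most $\nu(G)$.

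\emph{Lower bound.} Fix a maximum matching $M$. Build a local edge order in which, at each $v\in\setV(M)$, the $M$-edge at $v$ is placed first; the other edges and the orders at uncovered vertices are arbitrary. Every edge of $M$ is then jointly minimal, so some term of $N_G$ has degree at least $|M|$. For the sink polynomials we need an acyclic orientation of $L(G)$ whose sink set contains $M$. By the bijection in the proof of Proposition~\ref{prop:acyclicIndependentExpansion}, an acyclic orientation of $L(G-M)=L(G)-M$ extends uniquely by directing all edges into $M$. That orientation exists (for instance, any linear order of the vertices gives one) and is acyclic with every element of $M$ a sink. This gives degree exactly $\nu(G)$ for $S^a_{L(G)}$, and since acyclic orientations are among all orientations, for $S_{L(G)}$ too. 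All coefficients are nonnegative, so no cancellation can occur.

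\emph{Evaluations.} $N_G(1)$ is the number of local edge orders, namely $\prod_v d_G(v)!$; this is also the $M=\varnothing$ term of \eqref{eq:minimaExpansion}. $S_{L(G)}(1)=2^{|\setE(L(G))|}$, and every line-graph edge lies in exactly one clique $\setE(v)$, so $|\setE(L(G))|=\sum_v\binom{d_G(v)}{2}$. For a forest, Theorem~\ref{thm:forestEquality} gives $S^a_{L(G)}(1)=N_G(1)$. None of these steps is a real obstacle; the only point needing care is the existence of an acyclic orientation with prescribed sinks $M$, which the deletion bijection settles.
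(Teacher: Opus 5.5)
Your proposal is correct and follows essentially the same route as the paper: matching upper bounds, a maximum matching realized as joint minima and as sinks, direct counting for the two evaluations, and Theorem~\ref{thm:forestEquality} for the forest case. The only differences are cosmetic. You obtain the acyclic witness from the deletion bijection of Proposition~\ref{prop:acyclicIndependentExpansion}, whereas the paper orders $M$ first and directs edges toward earlier endpoints, which is one instance of that extension. You also deduce the all-orientations degree from the acyclic witness rather than giving a separate construction.
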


\begin{proof}
    We start by proving the degree assertion. In fact, jointly minimal edges form a matching, as do sinks in a line graph. This
    bounds all three degrees by \(\nu(G)\).  Now, given a maximum matching \(M\), we can
    make each edge in \(M\) minimal at both endpoints, proving
    \(\deg N_G(t)=\nu(G)\).  We can also orient every line-graph edge incident to a
    vertex in \(M\) toward that vertex, proving the all-orientations degree
    statement.
    
    For the acyclic polynomial, order the vertices of \(L(G)\) with the elements
    of \(M\) first, and direct each edge toward its earlier endpoint.  The
    orientation is acyclic and every element of \(M\) is a sink.  This proves the
    remaining degree statement.
    
    The first evaluation counts all local edge orders.  The second counts all
    orientations of \(L(G)\), and
    \[
    |\setE(L(G))|=\sum_{v\in\setV(G)}\binom{d_G(v)}{2}.
    \]
    The forest evaluation follows from Theorem~\ref{thm:forestEquality}.
\end{proof}

We now restrict the forest correspondence by prescribing some directions.
A \defin{frozen set} \(F\) is a set of
directed versions of edges of a graph \(G\).  We call \(F\) \defin{consistent}
if it contains at most one direction of each edge.  For an arbitrary frozen set \(F\),
define
\[
    \defin{S_G^{a}(t;F)} \coloneqq \sum_{\substack{\mathcal{O} \in \Or(G)\\ \mathcal{O}\supseteq F}} t^{\sink(\mathcal{O})}.
\]
If \(F\) is inconsistent, the indexing set is empty and the polynomial is
zero.

\begin{corollary}\label{cor:frozenForestRealRooted}
Let \(T\) be a forest and let \(F\) be a frozen set in \(L(T)\).  Then
\(S_{L(T)}^{a}(t;F)\) is either zero or real-rooted.
\end{corollary}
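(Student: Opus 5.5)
Note first that the definition of \(S_G^{a}(t;F)\) sums over all orientations containing \(F\), not only acyclic ones. For \(G=L(T)\) this makes no difference, and I would check it as the first step. Every orientation of \(L(T)\) that is transitive on each clique \(C_v\) is acyclic by the forest argument in the proof of Theorem~\ref{thm:forestEquality}. However, an arbitrary orientation of \(L(T)\) need not be locally transitive: a cyclically oriented triangle inside some \(C_v\) is a directed cycle. So the intended reading must be the sum over acyclic orientations containing \(F\), consistent with the superscript \(a\). I will prove the statement for that reading.

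\textbf{Step 1: reduction to constrained minima.} Assume \(F\) is consistent; otherwise the polynomial is zero. By Proposition~\ref{prop:localOrders} and Theorem~\ref{thm:forestEquality}, acyclic orientations of \(L(T)\) correspond bijectively to local edge orders on \(T\), with sinks corresponding to joint minima. An orientation contains \(F\) exactly when, for each directed line-graph edge \(e\to f\) in \(F\), the relation \(f<e\) holds in the local order at the common endpoint \(v\) of \(e\) and \(f\). This endpoint is unique because \(T\) is simple.

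For each \(v\in\setV(T)\), let \(R_v\) be the set of relations \(f<e\) on \(\setE(v)\) coming from \(F\). If the transitive closure of some \(R_v\) contains a cycle, no total order on \(\setE(v)\) extends it. Then no local edge order is compatible with \(F\), so \(S_{L(T)}^{a}(t;F)=0\). Otherwise each \(R_v\) generates a partial order \(P_v\) on \(\setE(v)\). The local edge orders compatible with \(F\) are then exactly the local edge orders extending \(\{P_v\}\). Hence
\[
S_{L(T)}^{a}(t;F)=N_T(t;\{P_v\}).
\]

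\textbf{Step 2: real-rootedness.} Theorem~\ref{thm:constrainedMinimaMatching} shows that \(N_T(t;\{P_v\})\) is real-rooted. Its proof gives \(\Lambda_v>0\) for every \(v\), so the polynomial equals \(\prod_v\Lambda_v\) times a weighted matching polynomial evaluated at \(t-1\). That matching polynomial has constant term \(1\), so it is nonzero. Translating by \(1\) preserves real-rootedness, which completes the proof.

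\textbf{Main obstacle.} The only delicate point is the correspondence in Step 1. Prescribed directions must translate exactly into comparisons within single vertex cliques. This uses two facts: every edge of \(L(T)\) lies in exactly one clique \(C_v\), and the forest hypothesis guarantees that every locally transitive orientation is acyclic. Both facts are already established earlier in the paper, so the argument is mostly bookkeeping.
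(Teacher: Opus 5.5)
Your proposal is correct and follows the paper's proof. Frozen line-graph edges become comparisons inside each star $\setE(v)$, and a cycle in some transitive closure forces the polynomial to vanish. Otherwise the forest bijection identifies $S_{L(T)}^{a}(t;F)$ with $N_T(t;\{P_v\})$, and Theorem~\ref{thm:constrainedMinimaMatching} applies.

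You are also right that the displayed definition's sum over $\Or(G)$ must be read as a sum over $\AO(G)$, which is how the paper's own proof treats it. Your opening claim that this distinction ``makes no difference'' for $L(T)$ is false, as you notice immediately afterward. Under the literal reading the statement would follow anyway from Corollary~\ref{cor:frozenClawFree}.
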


\begin{proof}
The claim holds immediately if \(F\) is inconsistent.  Assume that \(F\) is
consistent.  For each \(v\in\setV(T)\), impose the comparison \(e<f\) on
\(\setE(v)\) whenever the line-graph edge \(\{e,f\}\) is frozen toward \(e\).
Take the transitive closure of these comparisons.  If it contains a directed
cycle, no local total order extends the comparisons, and no acyclic
orientation of \(L(T)\) extends \(F\).  The polynomial is then zero.

Otherwise, the closures are partial orders \(P_v\).  The bijection in
Theorem~\ref{thm:forestEquality} restricts to a bijection between acyclic
orientations extending \(F\) and local edge orders extending all \(P_v\).  By
Lemma~\ref{lem:sinksAreMinima},
\[
S_{L(T)}^{a}(t;F)=N_T(t;\{P_v\}).
\]
Theorem~\ref{thm:constrainedMinimaMatching} proves the claim.
\end{proof}

\subsection{Unicyclic graphs}
\label{sec:examples}

A connected graph is \defin{unicyclic} if it has exactly one cycle. In the next theorem, the subgraph inherits the weights from the original graph. We express the acyclic sink polynomial of the line graph $L(G)$ of a unicyclic graph $G$ as a formula of the weighted matching polynomials of $G$ and of $G - E(C)$, where $C$ is the unique cycle of $G$. We use this formula to find a claw-free graph whose acyclic sink polynomial is not real-rooted.

\begin{theorem}\label{thm:unicyclicCorrection}
Let \(G\) be a connected unicyclic graph, and let \(C\) be its unique cycle,
of length \(\ell\). Set
\[
 Z_G=\prod_{v\in\setV(G)}d_G(v)!,
 \qquad
 \omega_{uv}=\frac{1}{d_G(u)d_G(v)}.
\]
Then
\begin{equation}\label{eq:unicyclicCorrection}
S_{L(G)}^{a}(t)
=Z_G\left(
  \mu_G(t-1;\omega)
  -2^{1-\ell}\mu_{G-\setE(C)}(t-1;\omega)
\right).
\end{equation}
\end{theorem}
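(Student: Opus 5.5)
The plan is to compare $S_{L(G)}^{a}(t)$ with $N_G(t)$. By Proposition~\ref{prop:localOrders} and the argument in the proof of Theorem~\ref{thm:forestEquality}, $N_G(t)-S_{L(G)}^{a}(t)$ enumerates, by sinks, the locally transitive orientations of $L(G)$ that contain a directed cycle. By Theorem~\ref{thm:minimaMatching}, $N_G(t)=Z_G\,\mu_G(t-1;\omega)$ with exactly the weights $\omega_{uv}=1/(d_G(u)d_G(v))$ of the statement. So it suffices to show that the cyclic locally transitive orientations contribute $Z_G\,2^{1-\ell}\mu_{G-\setE(C)}(t-1;\omega)$.

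First I will characterize the cyclic ones. Let $C$ have consecutive edges $e_1,\dots,e_\ell$, with $v_i$ the common endpoint of $e_i$ and $e_{i+1}$. Take a directed cycle of minimum length in a locally transitive orientation. I claim it is a cyclic sequence of root edges $f_1\to f_2\to\cdots$ in which consecutive edges share a vertex and no three consecutive edges share the same vertex, since transitivity at a vertex would shortcut. A closed walk in $L(G)$ that never stays within one star projects onto a closed walk in $G$ that does not immediately backtrack. In a unicyclic graph, such a walk must wind around $C$. Hence a locally transitive orientation is cyclic if and only if, at every $v_i$, the comparison between $e_i$ and $e_{i+1}$ points consistently in one of the two cyclic directions, that is, $e_{i+1}<_{v_i}e_i$ for all $i$ or $e_i<_{v_i}e_{i+1}$ for all $i$. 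The two events are disjoint once $\ell\ge3$. Making this rigorous, especially the "only if" direction (ruling out directed cycles that wander into the trees hanging off $C$), is the main obstacle. I would handle it with the block structure, as in Lemma~\ref{lem:forestBlockGraph}: the trees attached to $C$ contribute blocks that are single stars, so any cycle of $L(G)$ leaving a star must use the ring of stars $C_{v_1},\dots,C_{v_\ell}$.

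Next I will count by the indicator expansion. Write each cyclic event as local partial orders $P_v$, given by $e_{i+1}<e_i$ at $v_i$ and trivial elsewhere. By symmetry the two directions give equal polynomials, so the correction equals $2N_G(t;\{P_v\})$. Apply Theorem~\ref{thm:constrainedMinimaMatching}. At $v_i$ one has $\Lambda_{v_i}=d(v_i)!/2$. For an edge $e$ of $G$, the count $\lambda_{v_i}(e)$ equals $d(v_i)!/(2d(v_i))$ times $1$ if $e$ is off the cycle, times $0$ if $e=e_i$, and times $2$ if $e=e_{i+1}$. Now consider a matching $M$. If $M$ contains a cycle edge $e_{i+1}=v_iv_{i+1}$, then $e_{i+1}$ is the larger element at $v_{i+1}$, so its weight vanishes. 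Hence only matchings of $G-\setE(C)$ survive. For these, each $v_i$ contributes the factor $1/2$ relative to the unconstrained count, and all other factors agree with those in Theorem~\ref{thm:minimaMatching}. This gives $N_G(t;\{P_v\})=2^{-\ell}Z_G\,\mu_{G-\setE(C)}(t-1;\omega)$ with the inherited weights.

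Doubling gives $2^{1-\ell}Z_G\,\mu_{G-\setE(C)}(t-1;\omega)$, and subtracting this from $N_G(t)$ yields \eqref{eq:unicyclicCorrection}. The case $\ell=3$ needs the check that the two events remain disjoint, which they are, because the events are about distinct vertices.
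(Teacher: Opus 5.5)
Your proposal is correct, but it organizes the computation differently from the paper. The paper starts from the sink-set expansion \eqref{eq:acyclicLineMatchingExpansion} and evaluates each deletion count $\ao(L(G-M))$ on its own.
\begin{itemize}
\item When $M$ meets $C$, the graph $G-M$ is a forest, so Proposition~\ref{prop:degreeEvaluations} gives $\ao(L(G-M))=\prod_v d_{G-M}(v)!$.
\item When $M$ avoids $C$, the paper applies the coherent-cycle characterization inside $G-M$ and removes the fraction $2^{1-\ell}$.
\end{itemize}

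You instead subtract the cyclic locally transitive orientations once, globally:
\[
S^a_{L(G)}(t)=N_G(t)-N_G(t;\{P^+_v\})-N_G(t;\{P^-_v\}).
\]
You then expand each term over matchings using Theorems~\ref{thm:minimaMatching} and~\ref{thm:constrainedMinimaMatching}. Your values of $\Lambda_{v_i}$ and $\lambda_{v_i}(e)$ are right. Since $G$ has no chords, no surviving matching edge joins two cycle vertices, so the factor $2^{-\ell}$ comes out cleanly.

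In your version, matchings that meet $C$ drop out of the correction because $\lambda_{v_{i+1}}(e_{i+1})=0$. A cycle edge can never be minimal at its downstream endpoint. In the paper, those matchings drop out because $G-M$ becomes a forest. The two routes amount to performing the indicator expansion and the subtraction of cyclic orientations in opposite orders.

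Your route buys two things. It gives a positive interpretation of the correction term, namely $N_G(t)-S^a_{L(G)}(t)=2N_G(t;\{P^+_v\})$. It also needs the cycle characterization only for $G$, not for every $G-M$. The paper's route needs only unconstrained local-order counts.

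On the step you flagged as the main obstacle, you do not need the block structure. Your projection argument already suffices once you use that the root edges $f_1,\dots,f_k$ on a shortest directed cycle are distinct. The projected closed walk in $G$ then uses each edge at most once. Its edge set is therefore a nonempty subgraph with all degrees even, and in a unicyclic graph that forces it to be exactly $\setE(C)$, traversed once in one direction. This also covers $\ell=3$, where the line-graph triangle on $\setE(C)$ is not contained in a single star.

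Finally, your stated reason for disjointness is not quite right. The two coherent events are disjoint because they impose opposite comparisons at each $v_i$, not because they concern distinct vertices.
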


\begin{proof}
    We evaluate each residual orientation count in \eqref{eq:acyclicLineMatchingExpansion}.  Deleting a matching either breaks the unique cycle or leaves it intact; only the latter case needs a correction. Fix a matching \(M\) of \(G\).  Since \(M\) covers every vertex at most once,
    \begin{equation}
    \label{eq:unicyclicFactorialRatio}
        \prod_{v\in\setV(G)}d_{G-M}(v)! = Z_G\prod_{uv\in M}\frac{1}{d_G(u)d_G(v)}.
    \end{equation}
    If \(M\) contains a cycle edge, then \(G-M\) is a forest. By Proposition~\ref{prop:degreeEvaluations} and \eqref{eq:acyclicLineMatchingExpansion}, the right-hand side of \eqref{eq:unicyclicFactorialRatio} equals \(\ao(L(G-M))\).
    
    Suppose instead that \(M\cap\setE(C)=\varnothing\). The graph \(G-M\) has one
    unicyclic component and some forest components. A family of local orders on
    \(G-M\) gives a cyclic orientation of its line graph exactly when the edges of
    \(C\) are coherently directed. Indeed, a shortest directed cycle is chordless.
    It cannot lie in one root star, because every root star is transitively
    oriented.  To identify the remaining possibilities, let
    \(e_1,\dotsc,e_k\) be the root edges corresponding to a chordless line-graph
    cycle.  If \(k=3\), three pairwise incident edges either share a common
    endpoint or form a root triangle.  If \(k\geq4\), nonconsecutive edges
    \(e_i\) are disjoint.  Their successive common endpoints are therefore
    distinct, and \(e_1,\dotsc,e_k\) form a root cycle in that order.  Thus a
    shortest directed cycle not contained in a star must come from \(C\),
    including when \(C\) is a triangle.  Conversely, coherent directions around
    these cycle edges give a directed cycle in the line graph.
    
    For either coherent direction, one comparison between the two cycle edges is
    prescribed at each of the \(\ell\) cycle vertices. Exactly half of the local
    orders at each such vertex satisfy the comparison. Thus the two coherent
    directions account for the fraction \(2^{1-\ell}\) of all local-order
    families. Consequently,
    \[
    \ao(L(G-M))
    =Z_G\prod_{e\in M}\omega_e
     \left(1-2^{1-\ell}\right)
    \]
    when \(M\cap\setE(C)=\varnothing\), while the factor in parentheses is absent
    when \(M\) meets \(C\). Substitution in
    \eqref{eq:acyclicLineMatchingExpansion} proves
    \eqref{eq:unicyclicCorrection}.
\end{proof}

\begin{example}
\label{ex:unicyclicCounterexample}
Let \(G\) be obtained from a cycle with vertices
\(v_0,v_1,\dotsc,v_{11}\), in cyclic order, by attaching one path of length
two at each of \(v_0\) and \(v_4\). Thus \(G\) is a connected simple
unicyclic graph on 16 vertices, with maximum degree three; see
Figure~\ref{fig:unicyclicCounterexample}.  The two terms in
\eqref{eq:unicyclicCorrection}, divided by 8, are respectively
\[
(t+1)^2\left(
 t^6+90t^5+975t^4+2476t^3+975t^2+90t+1
\right)
\]
and
\[
(1+2t)^2.
\]
Thus direct evaluation of \eqref{eq:unicyclicCorrection} gives
\[
\begin{aligned}
S_{L(G)}^{a}(t)
={}&8t\bigl(
 88+1152t+4516t^2+6902t^3\\
&\hspace{34mm}{}+4516t^4+1156t^5+92t^6+t^7
\bigr),
\end{aligned}
\]
which is a polynomial that has a nonreal conjugate pair of roots. In particular, the acyclic sink
polynomial of a line graph, and hence of a claw-free graph, need not be
real-rooted.
\end{example}

\begin{figure}[!ht]
\centering
\begin{tikzpicture}[vertex/.style={circle,fill=black,inner sep=1.6pt}]
  \foreach \i in {0,...,11}
    \node[vertex] (v\i) at ({90-30*\i}:1.35) {};
  \foreach \i [evaluate=\i as \j using {int(mod(\i+1,12))}] in {0,...,11}
    \draw (v\i) -- (v\j);
  \node[vertex] (a0) at (0,2.05) {};
  \node[vertex] (b0) at (0,2.75) {};
  \draw (v0) -- (a0) -- (b0);
  \node[vertex] (a4) at (-30:2.05) {};
  \node[vertex] (b4) at (-30:2.75) {};
  \draw (v4) -- (a4) -- (b4);
  \node[above left] at (v0) {\(v_0\)};
  \node[below left] at (v4) {\(v_4\)};
\end{tikzpicture}
\caption{The root graph in Example~\ref{ex:unicyclicCounterexample}.
  Two paths of length two are attached to a 12-cycle at vertices separated
  by four cycle edges.  Its line graph has an acyclic sink polynomial with
  nonreal zeros.}
\label{fig:unicyclicCounterexample}
\end{figure}

We now compute an explicit formula for the acyclic sink polynomial of a cycle, and show that this polynomial is real-rooted. For the cycle \(C_n\), where \(n\geq3\), each local order chooses the
direction of one edge of \(L(C_n)=C_n\).  Encode an orientation by a cyclic
binary word, according to whether each edge points clockwise or
counterclockwise.  Every nonconstant word has equally many transitions of
both types, so \(k\) sinks correspond to \(2k\) transitions.  We may choose
the transition positions in \(\binom{n}{2k}\) ways and the first bit in two
ways.  The two constant words have no sinks.  Consequently,
\[
N_{C_n}(t)
=(1+\sqrt{t})^n+(1-\sqrt{t})^n
=2\sum_{k\geq 0}\binom{n}{2k}t^k.
\]
In particular,
\[
N_{C_3}(t)=2+6t,
\qquad
N_{C_4}(t)=2+12t+2t^2.
\]
\begin{proposition}\label{prop:cycleSinks}
For every \(n\geq3\),
\[
S_{C_n}^{a}(t)=2\sum_{k\geq1}\binom{n}{2k}t^k
\]
is real-rooted.
\end{proposition}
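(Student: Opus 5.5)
The plan is to get the formula directly from the cyclic binary word encoding used just above for \(N_{C_n}(t)\), and then to deduce real-rootedness from that closed form.

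\emph{Step 1: the formula.} Since \(L(C_n)\cong C_n\), we may work with orientations of \(C_n\) itself. All \(2^n\) orientations of \(C_n\) are acyclic except the two coherent ones. The count preceding the statement shows that the sum over all orientations is
\[
2\sum_{k\ge0}\binom{n}{2k}t^k .
\]
The two constant words are exactly the two directed cycles, and they have no sinks, so they make up the whole \(k=0\) term. Removing them gives
\[
S^a_{C_n}(t)=2\sum_{k\ge1}\binom{n}{2k}t^k=(1+\sqrt t)^n+(1-\sqrt t)^n-2 .
\]
As a check, for \(n=3\) this gives \(6t\), which matches Example~\ref{ex:triangle}.

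\emph{Step 2: real-rootedness.} This step is the real content. Factor out \(t\) and write \(S^a_{C_n}(t)=t\,R(t)\). It then suffices to show that \(R\) has only real negative zeros.

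To find the zeros, I would substitute \(t=-\tan^2\phi\), so that \(\sqrt t=i\tan\phi\). Then
\[
1\pm i\tan\phi=\frac{e^{\pm i\phi}}{\cos\phi},
\]
and the expression becomes
\[
\frac{2\cos(n\phi)}{\cos^n\phi}-2 .
\]
So the zeros correspond to solutions of \(\cos(n\phi)=\cos^n\phi\) with \(\phi\in(0,\pi/2)\).

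The degree of \(S^a_{C_n}\) is \(m=\lfloor n/2\rfloor\), so \(R\) has degree \(m-1\). The plan is therefore to exhibit \(m-1\) distinct values \(\phi\in(0,\pi/2)\) solving this equation. Each such \(\phi\) gives a distinct negative root \(t=-\tan^2\phi\), and together they account for every root of \(R\).

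\emph{Step 3: counting the solutions.} I expect this to be the main obstacle. Consider
\[
g(\phi)=\cos(n\phi)-\cos^n\phi \quad\text{on } (0,\pi/2).
\]
The key estimate is that \(\cos^n\phi<|\cos(n\phi)|\) at the extremal points \(\phi_j=j\pi/n\) with \(1\le j<n/2\). At these points \(|\cos(n\phi_j)|=1>\cos^n\phi_j\), so \(g(\phi_j)\) has sign \((-1)^j\).

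Near \(\phi=0\) we have \(g(\phi)\approx-\binom n2\phi^2+\dots<0\). At \(\phi=\pi/2\) we have \(g=\cos(n\pi/2)\).

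Applying the intermediate value theorem between consecutive points with opposite signs gives the sign changes. Between \(\phi_1\) and the last \(\phi_j\) there are \(\lfloor (n-1)/2\rfloor-1\) sign changes, and the final interval up to \(\pi/2\) possibly contributes one more. I would handle the parity cases \(n\) even and \(n\) odd separately to confirm that the total is exactly \(m-1\) distinct roots.

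If this bookkeeping becomes messy, there is an alternative route. The polynomial equals \(N_{C_n}(t)-2\), and \(N_{C_n}\) has roots \(-\tan^2((2j+1)\pi/(2n))\), which come from \(\cos(n\phi)=0\). I would try to argue interlacing from there. However, the trigonometric sign count above is the primary plan.
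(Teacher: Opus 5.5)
Your proposal is correct, but Steps~2 and~3 differ from the paper's argument. Step~1 is the same reduction $S^a_{C_n}=N_{C_n}-2$ that the paper uses.

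The paper writes $S^a_{C_n}(t)=2tB_n(t)$, with $B_n$ the odd part of $H_n(z)=((1+z)^n-1)/z$. It observes that $H_n$ is Hurwitz because its zeros are $e^{2\pi ij/n}-1$, and then lets the Hermite--Biehler theorem deliver the simple negative zeros of $B_n$.

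You instead locate the zeros by hand, through $S^a_{C_n}(-\tan^2\phi)=2(\cos n\phi-\cos^n\phi)/\cos^n\phi$ and the intermediate value theorem. The parity bookkeeping you deferred does close:
\begin{itemize}
\item For $n=2m$, the $m-1$ points $j\pi/n$ give $m-2$ sign changes. The last interval adds one more, because $g(\pi/2)=(-1)^m$ has sign opposite to $g(\phi_{m-1})$.
\item For $n=2m+1$, the $m$ points already give $m-1$ sign changes. Here $g(\pi/2)=0$ corresponds to $t=-\infty$ and is not a root, so it must not be counted.
\end{itemize}
The decreasing bijection $\phi\mapsto-\tan^2\phi$ from $(0,\pi/2)$ onto $(-\infty,0)$ makes these roots distinct. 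Their number matches $\deg R=m-1$, so all zeros of $R$ are real, negative and simple.

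Your route is elementary and gives more information. The separating values $-\tan^2(j\pi/n)$ are exactly the zeros of the even part $A_n$, since $A_n(-\tan^2\phi)=\sin n\phi/(\sin\phi\cos^{n-1}\phi)$. So you are proving directly the interlacing of $A_n$ and $B_n$ that Hermite--Biehler supplies, and you get an explicit interval for each zero. The paper's route is shorter, needing only the trivial location of the zeros of $H_n$, at the price of citing a classical theorem.
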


Recall that a polynomial $f(t) \in \mathbb{R}[t]$ is called \defin{Hurwitz} if $f(t) = 0$ implies $\mathrm{Re}(t) \leq 0$. To prove the real-rootedness of $S_{C_n}^{a}(t)$, we need the following version of Hermite--Biehler theorem:

\begin{lemma}[Hermite--Biehler~\cite{hermite}]
\label{lem:HB-theorem}
    Let $f(t), g(t), h(t) \in \mathbb{R}[t]$ be such that $f(t) = h(t^2) + t \cdot g(t^2)$, and assume that $f(t)$ has no zeros on the imaginary axis. Then $f(t)$ is Hurwitz if and only if $g(t)$ and $h(t)$ have negative and simple roots, $g(t)$ strictly interlaces $h(t)$, and $h(0) g(0) > 0$.
\end{lemma}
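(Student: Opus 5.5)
The statement is the classical Hermite--Biehler theorem. The plan is to prove both directions by studying $f$ on the imaginary axis and using the argument principle; for the converse I would add a partial-fraction (positive-real function) argument. Let $n=\deg f$. Since $f(t)=h(t^2)+t\,g(t^2)$, the even part $h(t^2)$ and the odd part $t\,g(t^2)$ have degrees whose top one equals $n$. Thus $\deg h=\lfloor n/2\rfloor$ and $\deg g=\lfloor (n-1)/2\rfloor$ whenever the corresponding leading terms survive. For real $y$,
\[
f(iy)=h(-y^2)+i\,y\,g(-y^2),
\]
so $\operatorname{Re} f(iy)=h(-y^2)$ and $\operatorname{Im} f(iy)=y\,g(-y^2)$. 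By hypothesis this curve never passes through $0$.

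\emph{Forward direction.} Suppose $f$ is Hurwitz. Because there are no zeros on the imaginary axis, every zero $z_k$ satisfies $\operatorname{Re} z_k<0$. Write $f(iy)=c\prod_k(iy-z_k)$. Each factor has an argument that is strictly increasing in $y$ and rises by $\pi$ as $y$ runs from $-\infty$ to $\infty$. Hence $\arg f(iy)$ is strictly increasing with total change $n\pi$. A point moving with strictly monotone argument crosses the real and imaginary axes alternately and transversally. Crossing the imaginary axis means $\operatorname{Re}f(iy)=0$, and crossing the real axis means $\operatorname{Im}f(iy)=0$.

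I then count these crossings. The total winding $n\pi$ forces $\operatorname{Re}f(iy)$ to have exactly $2\lfloor n/2\rfloor$ simple real zeros. These are symmetric in $y\mapsto -y$, so $h$ has $\lfloor n/2\rfloor$ simple zeros $s=-y^2<0$; the value $y=0$ is excluded because $f(0)\ne0$. In the same way $y\,g(-y^2)$ has $2\lfloor (n-1)/2\rfloor+1$ simple zeros, one of which is $y=0$. So $g$ has $\lfloor (n-1)/2\rfloor$ simple negative zeros. Degree counting shows these are all the zeros of $h$ and $g$. Alternation of the crossings along $y>0$, read in the variable $s=-y^2$, is exactly strict interlacing of $g$ and $h$.

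Finally, $\frac{d}{dy}\arg f(iy)$ at $y=0$ equals $g(0)/h(0)$, and this must be positive. That gives $h(0)g(0)>0$. I would handle the parity of $n$ carefully here; this bookkeeping of the counts and endpoints is the fiddly but routine step.

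\emph{Converse.} This is the step I expect to be the real obstacle, and I would avoid a delicate winding count. Instead I would use the partial-fraction expansion. Let $r_1,\dots,r_m<0$ be the simple zeros of $h$. Strict interlacing together with $h(0)g(0)>0$ should give
\[
\frac{g(s)}{h(s)}=c+\sum_{k}\frac{a_k}{s-r_k},\qquad c\ge0,\ a_k>0 .
\]
Interlacing fixes the residues to have a common sign, and the sign condition at $s=0$ makes that sign positive. Then I consider
\[
R(t)=\frac{t\,g(t^2)}{h(t^2)}=c\,t+\sum_k\frac{a_k\,t}{t^2-r_k}.
\]
Each summand has positive real part when $\operatorname{Re}t>0$, because $\frac{t}{t^2+b}=\frac{1}{t+b/t}$ with $b=-r_k>0$. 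Hence $\operatorname{Re}R(t)>0$ on the open right half-plane.

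It follows that $f=h(t^2)\bigl(1+R(t)\bigr)$ cannot vanish there. If $h(t^2)=0$ at such a point, then $t^2=r_k<0$ and $t$ is purely imaginary, which is impossible. Together with the assumption of no zeros on the imaginary axis, this shows $f$ is Hurwitz.

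The main checks are the sign of the residues, which follows from interlacing via the standard Hermite--Kakeya argument, and the degree cases where $c=0$.
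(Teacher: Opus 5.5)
Your forward direction is sound. The strictly increasing argument of $f(iy)$, the count of $2n-1$ axis crossings against the degree bounds on $h(-y^2)$ and $y\,g(-y^2)$, and the derivative $g(0)/h(0)$ of the argument at $y=0$ give what is needed. This is also the only implication the paper uses: in Proposition~\ref{prop:cycleSinks} it quotes the lemma without proof. You should add one conclusion there explicitly. Since $y=0$ is a crossing of the real axis, the first crossing for $y>0$ is a zero of $h(-y^2)$. So the zero nearest the origin belongs to $h$, and this fact is exactly what the converse needs.

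The converse has a genuine gap at ``the sign condition at $s=0$ makes that sign positive.'' Interlacing does make all residues $g(r_k)/h'(r_k)$ share a sign, but $h(0)g(0)>0$ does not fix that sign. Take $h(s)=s+2$ and $g(s)=s+1$. Both have simple negative zeros, the zeros $-2<-1$ alternate, and $h(0)g(0)=2>0$. Yet
\[
\frac{g(s)}{h(s)}=1-\frac{1}{s+2}
\]
has a negative residue. The polynomial $f(t)=t^3+t^2+t+2$ has no zeros on the imaginary axis, but it has two zeros near $0.18\pm 1.20i$. So under an undirected reading of ``strictly interlaces'' the converse is false, and your argument would ``prove'' it.

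The missing ingredient is the direction of interlacing. ``$g$ strictly interlaces $h$'' must mean that the zeros alternate and the largest one, the one closest to $0$, belongs to $h$; this is the convention under which $f'$ interlaces $f$. With it, neither $g$ nor $h$ vanishes on $(r_1,0]$, where $r_1$ is the largest zero of $h$. Hence $g(r_1)/h'(r_1)$ has the sign of $g(0)/h(0)>0$. The direction also forces $\deg g\le\deg h$, which your partial-fraction form with no positive-degree polynomial part silently assumes. Without it, $h(s)=s+1$ and $g(s)=(s+2)(s+\tfrac12)$ give $f(t)=t^5+\tfrac52t^3+t^2+t+1$, which is not Hurwitz since its $t^4$ coefficient vanishes.

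Once you state this convention and use it in both directions, your positive-real-part argument for $R(t)$ goes through.
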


\begin{proof}[Proof of Proposition \ref{prop:cycleSinks}]
    Since \(L(C_n)=C_n\), local orders on \(C_n\) correspond to all orientations of \(C_n\).  The two directed cycles are the only cyclic orientations, and therefore
    \[
        S_{L(C_n)}^{a}(t)=N_{C_n}(t)-2.
    \]
    To prove the real-rootedness, let $n \geq 2$ and consider the polynomial
    \[
        H_n(z) = \frac{(1+z)^n-1}{z}. 
    \]
    Its zeros are $z_j = e^{i{\frac{2 \pi j}{n}}} - 1$, $j = 1, \ldots, n-1$. Since $\left| z_j \right| = 1$ and $\mathrm{Re} \left(  z_j \right) < 1$, these zeros have strictly negative real part. Hence $H_n(z)$ is Hurwitz with no zeros on the imaginary axis. Moreover,
    \[
        H_n(z) = \sum_{k=1}^n \binom{n}{k} z^{k-1} = A_n(z^2) + z \cdot B_n(z^2),
    \]
    where
    \[
        A_n(z) = \sum_{k \geq 0} \binom{n}{2k+1} z^k \quad \mbox{and} \quad B_n(z) = \sum_{k \geq 1} \binom{n}{2k} z^{k-1} .
    \]
    Hence, by Lemma \ref{lem:HB-theorem}, $A_n(t)$ and $B_n(t)$ have only simple negative roots. Since $S^a_{C_n}(t) = 2t \cdot B_n(t)$, it follows that $S^a_{C_n}(t)$ is real-rooted.
\end{proof}

Theorem~\ref{thm:chordalAcyclic} gives real-rootedness of $S^a_{L(G)}(t)$ whenever the line graph is chordal. For a forest, Theorem~\ref{thm:forestEquality} gives the stronger identification with a translated weighted matching polynomial. Cycles give another real-rooted family. Example~\ref{ex:unicyclicCounterexample} shows that neither line-graph structure nor unicyclicity suffices in general. It is therefore natural to ask the following:

\begin{question}
\label{ques:positiveClass}
    For which root graphs \(G\) is \(S_{L(G)}^{a}(t)\) real-rooted?
\end{question}

\section{The sink polynomial of claw-free graphs}
\label{sec:allOrient}

In this section, we prove that the sink polynomial of claw-free graphs is real-rooted. The proof comes from an easy specialization of $\mathrm{I}_G(t; \eta)$. We also extend the notion of frozen sets to the sink polynomial, and prove the real-rootedness of this new polynomial for claw-free graphs. 

Our first result is an expression of $S_G(t)$ as a specialization of $\mathrm{I}_G(t; \eta)$. When cycles are allowed, prescribing a sink set forces its incident edge directions and leaves all other directions independent.  This gives product weights for every graph, without an elimination order. For a line graph, the weighted independence polynomial in \eqref{eq:allOrientWeightedIndependent} is a weighted matching polynomial of
the root graph.

\begin{proposition}\label{prop:allOrientIndependent}
    For every graph \(G\),
    \begin{equation}\label{eq:allOrientIndependent}
        S_G(t) = \sum_{I\in\indsets(G)} 2^{|\setE(G)|-\sum_{v\in I}d_G(v)}(t-1)^{|I|}.
    \end{equation}
    Equivalently,
    \begin{equation}
    \label{eq:allOrientWeightedIndependent}
        \frac{S_G(1+t)}{2^{|\setE(G)|}} = \operatorname{I}_G \left(t;\left\{2^{-d_G(v)}\right\}_{v\in\setV(G)}\right).
    \end{equation}
    In particular,
    \begin{equation}
    \label{eq:allOrientMatching}
        \frac{S_{L(G)}(1+t)}{2^{|\setE(L(G))|}} = \mu_G\left(t;\left\{2^{-d_{L(G)}(e)}\right\}_{e\in\setE(G)}\right).
    \end{equation}
\end{proposition}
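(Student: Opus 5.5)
The plan is to repeat the indicator expansion from the proof of Proposition~\ref{prop:acyclicIndependentExpansion}, now summing over all orientations rather than acyclic ones. For each orientation $\mathcal O$ of $G$ we write
\[
t^{\sink(\mathcal O)}=\prod_{v\in\setV(G)}\bigl(1+(t-1)\mathbf 1_{v\in\operatorname{Sink}(\mathcal O)}\bigr)
\]
and expand. After summing over $\Or(G)$, the coefficient of $(t-1)^{|I|}$ for a vertex set $I$ is the number of orientations in which every vertex of $I$ is a sink. If $I$ contains two adjacent vertices, the edge joining them cannot point toward both, so this count is zero. Only independent sets contribute.

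For $I$ independent, the key counting step is as follows. Every edge incident to $I$ has exactly one endpoint in $I$, by independence. Its direction is therefore forced toward that endpoint, and distinct vertices of $I$ force disjoint sets of edges. The number of forced edges is thus $\sum_{v\in I}d_G(v)$. Every remaining edge can be oriented freely, since no acyclicity constraint is imposed here. This gives $2^{|\setE(G)|-\sum_{v\in I}d_G(v)}$ orientations and proves \eqref{eq:allOrientIndependent}. Isolated vertices cause no trouble: they are sinks by convention and contribute degree $0$.

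For \eqref{eq:allOrientWeightedIndependent}, we factor out $2^{|\setE(G)|}$ and replace $t$ by $1+t$. Each $v\in I$ then contributes the weight $2^{-d_G(v)}$, which matches the definition of $\operatorname{I}_G(t;\eta)$. For \eqref{eq:allOrientMatching}, we apply this to $L(G)$. Independent sets of $L(G)$ are exactly the matchings of $G$, as in the proof of Proposition~\ref{prop:acyclicIndependentExpansion}, and the vertex $e$ of $L(G)$ carries the weight $2^{-d_{L(G)}(e)}$. The independence polynomial of $L(G)$ with these vertex weights is, by definition, the weighted matching polynomial $\mu_G$ with edge weights $\omega_e=2^{-d_{L(G)}(e)}$.

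There is no real obstacle in this argument. The only point needing care is that forced edge sets for distinct vertices of $I$ are disjoint, which is exactly where independence is used.
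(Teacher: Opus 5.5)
Your proof is correct and follows essentially the same route as the paper. Both use the indicator expansion over all orientations, note that independence of $I$ forces exactly $\sum_{v\in I}d_G(v)$ distinct edge directions while the remaining edges stay free, and then finish with the translation $t\mapsto 1+t$ and the correspondence between independent sets of $L(G)$ and matchings of $G$.
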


\begin{proof}
    Fix an orientation \(\mathcal{O}\) of $G$. Expanding
    \[
        t^{\sink(\mathcal{O})}=\prod_v\bigl(1+(t-1) \cdot \mathbf{1}_{v \in \operatorname{Sink}(\mathcal{O})} \bigr)
    \]
    and summing over all orientations, the coefficient attached to \(I\subseteq\setV(H)\) counts the orientations in which every vertex of \(I\) is a sink. This count is zero unless \(I\) is independent, see proof of \eqref{eq:acyclicIndependentExpansion}.  If \(I\) is independent, its incident edges are distinct and their \(\sum_{v\in I}d_G(v)\) directions are forced.  Every remaining edge has two possible directions, which proves \eqref{eq:allOrientIndependent}.

    Now, to prove \eqref{eq:allOrientMatching}, observe that independent sets of \(L(G)\) are precisely matchings of \(G\). Substituting this correspondence into \eqref{eq:allOrientIndependent}, factoring \(2^{|\setE(L(G))|}\), and replacing \(t\) by \(1+t\) gives \eqref{eq:allOrientMatching}.
\end{proof}

\begin{example}
    As we computed in Example \ref{ex:triangle}, $S_{K_3}(t) = 2 + 6t$. Since $|E(K_3)| = 3$, $d_{K_3}(v) = 2$ for all $v \in V(K_3)$ and $I \in \mathcal{I}(K_3)$ if and only if $I = \varnothing$ or $|I| = 1$, it follows that
    \[
        2^{|E(K_3)|} \cdot \operatorname{I}_{K_3} \left(t;\left\{ 2^{-2} \right\}_{v\in\setV(K_3)}\right) = 8 \cdot \left( 1 + \frac{3t}{4} \right) = S_{K_3}(1 + t).
    \]
\end{example}

\begin{theorem}\label{thm:clawFreeAllOrient}
    If \(G\) is claw-free, then \(S_G(t)\) is real-rooted.
\end{theorem}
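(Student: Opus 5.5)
The plan is to read the result off Proposition~\ref{prop:allOrientIndependent} and then apply Theorem~\ref{thm:weightedClawFree}. By \eqref{eq:allOrientWeightedIndependent},
\[
S_G(1+t)=2^{|\setE(G)|}\,\operatorname{I}_G\bigl(t;\{2^{-d_G(v)}\}_{v\in\setV(G)}\bigr).
\]
The weights \(\eta_v=2^{-d_G(v)}\) are strictly positive. Isolated vertices cause no trouble, since they receive weight \(1\).

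Since \(G\) is claw-free, Theorem~\ref{thm:weightedClawFree} says that \(\operatorname{I}_G(t;\eta)\) has only real nonpositive zeros. Multiplying by the positive constant \(2^{|\setE(G)|}\) does not change the zeros. Hence every zero of \(S_G(1+t)\) is real.

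Next I substitute \(t\mapsto t-1\). A translation of the variable maps real zeros to real zeros, so \(S_G(t)\) is real-rooted. The polynomial is also nonzero: the constant term of \(\operatorname{I}_G\) is \(1\), and the empty graph gives \(S_G=1\).

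If the nonpositivity of the zeros is also wanted, it follows from positivity. The coefficients of \(S_G(t)\) are nonnegative and \(S_G\not\equiv0\), so \(S_G(x)>0\) for every \(x>0\). Thus no zero is positive.

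I see no real obstacle. The substantive input is the weighted Chudnovsky--Seymour theorem in Engstr\"om's form, which the paper has already stated. The only bookkeeping point is that the weights depend on degrees, and that dependence is permitted because Theorem~\ref{thm:weightedClawFree} allows arbitrary nonnegative vertex weights.
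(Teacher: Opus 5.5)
Your proposal is correct and matches the paper's proof. The paper likewise uses \eqref{eq:allOrientWeightedIndependent} to write \(S_G(1+t)\) as \(2^{|\setE(G)|}\) times a weighted independence polynomial with positive weights \(2^{-d_G(v)}\), applies Theorem~\ref{thm:weightedClawFree}, and translates back. Your remarks that \(S_G\) is nonzero and has no positive zeros are valid additions that the paper leaves implicit.
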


\begin{proof}
    Equation~\eqref{eq:allOrientWeightedIndependent} identifies a translate and positive scalar multiple of \(S_G(t)\) with a weighted independence polynomial of \(G\).  The weights are positive, so Theorem~\ref{thm:weightedClawFree} proves the claim.
\end{proof}

\begin{remark}
\label{rem:allNotAcyclic}
    Restricting to acyclic orientations can destroy real-rootedness even for a line graph, as Example~\ref{ex:unicyclicCounterexample} shows.
\end{remark}

For a frozen set \(F\), define
\[
    \defin{S_G(t;F)} \coloneqq \sum_{\substack{\mathcal{O} \in \Or (G)\\ \mathcal{O}\supseteq F}} t^{\sink(\mathcal{O})}.
\]
As before, this polynomial is zero when \(F\) is inconsistent.

Suppose that \(F\) is consistent.  Let \(\defin{\setE_F(G)} \) be the set of unfrozen edges of \(G\), that is, the set of edges of $G$ for which no directed version belongs to $F$.  Let \(\defin{W_F}\subseteq\setV(G)\) consist of the vertices with no
frozen edge directed away from them, and let \(\defin{r_F(v)}\) be the number of
unfrozen edges incident to \(v\).

\begin{proposition}
\label{prop:frozenAllOrientIndependent}
    For a graph \(G\) and a consistent frozen set \(F\),
    \begin{equation}
    \label{eq:frozenIndependent}
        S_G(t;F) = 2^{|\setE_F(G)|} \sum_{I\in\indsets(G[W_F])}(t-1)^{|I|} \prod_{v\in I}2^{-r_F(v)}.
    \end{equation}
\end{proposition}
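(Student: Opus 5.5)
The plan is to repeat the indicator expansion used for Proposition~\ref{prop:allOrientIndependent}, now summing only over orientations \(\mathcal{O}\supseteq F\). For each such orientation we write
\[
    t^{\sink(\mathcal{O})}=\prod_{v}\bigl(1+(t-1)\cdot\mathbf{1}_{v\in\operatorname{Sink}(\mathcal{O})}\bigr),
\]
expand, and sum. The coefficient of \((t-1)^{|I|}\) attached to a vertex set \(I\) is then the number of orientations extending \(F\) in which every vertex of \(I\) is a sink. The task reduces to computing this count for each \(I\) and showing that it vanishes unless \(I\in\indsets(G[W_F])\).

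First we settle when the count is zero. If two vertices of \(I\) are adjacent, they cannot both be sinks, so \(I\) must be independent in \(G\). If some \(v\in I\) has a frozen edge directed away from it, then \(v\) cannot be a sink in any orientation containing \(F\), so \(I\subseteq W_F\). An independent subset of \(G\) contained in \(W_F\) is exactly an independent set of the induced subgraph \(G[W_F]\). This gives the indexing set in \eqref{eq:frozenIndependent}.

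Next fix \(I\in\indsets(G[W_F])\). Every edge incident to a vertex of \(I\) must point into that vertex. Since \(I\) is independent, each such edge has exactly one endpoint in \(I\), so these requirements never conflict with one another. If such an edge is frozen, the requirement agrees with \(F\), because \(v\in W_F\) means its frozen edges already point toward \(v\). If the edge is unfrozen, the requirement forces its direction; there are \(\sum_{v\in I}r_F(v)\) such edges, all distinct by independence. Every other unfrozen edge remains free, with two choices. The count is therefore
\[
    2^{|\setE_F(G)|-\sum_{v\in I}r_F(v)}.
\]
Factoring out \(2^{|\setE_F(G)|}\) then gives \eqref{eq:frozenIndependent}.

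The only delicate point is checking that the frozen constraints and the sink constraints are compatible exactly when \(I\subseteq W_F\). Consistency of \(F\) is needed here so that each edge carries at most one prescribed direction, and independence of \(I\) ensures that no edge is forced in two directions. Once this is in place, the rest is the same counting as in Proposition~\ref{prop:allOrientIndependent}.
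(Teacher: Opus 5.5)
Your proposal is correct and follows the paper's proof: the indicator expansion over orientations containing \(F\), restricting possible sink sets to independent subsets of \(W_F\), and counting the \(\sum_{v\in I}r_F(v)\) forced unfrozen edges while the remaining unfrozen edges stay free. You also spell out the compatibility check between frozen edges and the sink requirement, which the paper leaves implicit.
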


\begin{proof}
    Use the indicator expansion from Proposition~\ref{prop:allOrientIndependent}.  Any sink set must be an independent subset of \(W_F\).  For such a set \(I\), precisely \(\sum_{v\in I}r_F(v)\) free edge directions are forced; the other free edges may be oriented independently.  This gives \eqref{eq:frozenIndependent}.
\end{proof}

\begin{example}
\label{ex:K3-frozen}
    Let $F$ be the frozen directed edge of $K_3$ colored in red in Figure \ref{fig:K3-frozen}, which shows all orientations of $K_3$ containing $F$. Clearly, $S_{K_3} (t; F) = 1 + 3t$. On the other hand, $|E_F(K_3)| = 2$, $W_F = \{b,c\}$, $r_F(b) = 1$ and $r_F(c) = 2$. Since $K_3[W_F]$ is the graph on $\{b,c\}$ whose only edge set is $\{bc\}$, it follows that its independent sets are $\varnothing$, $\{b\}$ and $\{c\}$. Hence,
    \[
        2^{|\setE_F(K_3)|} \sum_{I\in\indsets(K_3[W_F])}(t-1)^{|I|} \prod_{v\in I}2^{-r_F(v)} = 4 \left( 1 + \frac{t-1}{2} + \frac{t-1}{4}  \right) = S_{K_3} (t; F).
    \]
    
    \begin{figure}[H]
        \centering
        \begin{tikzpicture}[scale=0.85, vertex/.style={circle,fill=black,inner sep=2.2pt},edge label/.style={fill=white,inner sep=1.5pt,font=\small},>=Stealth]
            \node[scale=0.5, vertex, label=below:\(a\)] (v1) at (0,0) {};
            \node[scale=0.5, vertex, label=below:\(c\)] (v2) at (2,0) {};
            \node[scale=0.5, vertex, label=above:\(b\)] (v3) at (1,1.732) {};
            \draw[postaction={decorate},decoration={markings,mark=at position 0.6 with {\arrow{>}}}] (v1) -- (v2);
            \draw[postaction={decorate},decoration={markings,mark=at position 0.6 with {\arrow{>}}}] (v3) -- (v2);
            \draw[postaction={decorate},decoration={markings,mark=at position 0.6 with {\arrow{>}}},red, dashed] (v1) -- (v3);
    
            \node[scale=0.5, vertex, label=below:\(a\)] (v4) at (4,0) {};
            \node[scale=0.5, vertex, label=below:\(c\)] (v5) at (6,0) {};
            \node[scale=0.5, vertex, label=above:\(b\)] (v6) at (5,1.732) {};
            \draw[postaction={decorate},decoration={markings,mark=at position 0.6 with {\arrow{>}}}] (v4) -- (v5);
            \draw[postaction={decorate},decoration={markings,mark=at position 0.6 with {\arrow{>}}}] (v5) -- (v6);
            \draw[postaction={decorate},decoration={markings,mark=at position 0.6 with {\arrow{>}}},red, dashed] (v4) -- (v6);
    
            \node[scale=0.5, vertex, label=below:\(a\)] (v7) at (8,0) {};
            \node[scale=0.5, vertex, label=below:\(c\)] (v8) at (10,0) {};
            \node[scale=0.5, vertex, label=above:\(b\)] (v9) at (9,1.732) {};
            \draw[postaction={decorate},decoration={markings,mark=at position 0.6 with {\arrow{>}}}] (v8) -- (v7);
            \draw[postaction={decorate},decoration={markings,mark=at position 0.6 with {\arrow{>}}}] (v9) -- (v8);
            \draw[postaction={decorate},decoration={markings,mark=at position 0.6 with {\arrow{>}}},red, dashed] (v7) -- (v9);
    
            \node[scale=0.5, vertex, label=below:\(a\)] (v19) at (12,0) {};
            \node[scale=0.5, vertex, label=below:\(c\)] (v20) at (14,0) {};
            \node[scale=0.5, vertex, label=above:\(b\)] (v21) at (13,1.732) {};
            \draw[postaction={decorate},decoration={markings,mark=at position 0.6 with {\arrow{>}}}] (v20) -- (v19);
            \draw[postaction={decorate},decoration={markings,mark=at position 0.6 with {\arrow{>}}}] (v20) -- (v21);
            \draw[postaction={decorate},decoration={markings,mark=at position 0.6 with {\arrow{>}}},red, dashed] (v19) -- (v21);
        \end{tikzpicture}
        \caption{All the orientations of the triangle $K_3$ containing $F$, the frozen set defined in Example \ref{ex:K3-frozen}.}
        \label{fig:K3-frozen}
    \end{figure}
\end{example}

\begin{corollary}
\label{cor:frozenClawFree}
    Let \(G\) be claw-free and let \(F\) be a frozen set in \(G\). Then \(S_G(t;F)\) is either zero or real-rooted.
\end{corollary}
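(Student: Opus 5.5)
If \(F\) is inconsistent, the indexing set of \(S_G(t;F)\) is empty and the polynomial is zero, so there is nothing to prove. The plan is therefore to assume \(F\) consistent and apply Proposition~\ref{prop:frozenAllOrientIndependent}. This writes \(S_G(t;F)\) as \(2^{|\setE_F(G)|}\) times a weighted independence polynomial of the induced subgraph \(G[W_F]\), evaluated at \(t-1\), with vertex weights \(\eta_v=2^{-r_F(v)}>0\). Replacing \(t\) by \(1+t\) gives
\[
\frac{S_G(1+t;F)}{2^{|\setE_F(G)|}}=\operatorname{I}_{G[W_F]}\bigl(t;\{2^{-r_F(v)}\}_{v\in W_F}\bigr).
\]

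The key structural observation is that claw-freeness is inherited by induced subgraphs. An induced \(K_{1,3}\) in \(G[W_F]\) would also be an induced \(K_{1,3}\) in \(G\), so \(G[W_F]\) is claw-free. Theorem~\ref{thm:weightedClawFree} then shows that the right-hand side has only real nonpositive zeros. Translation by one and multiplication by the positive constant \(2^{|\setE_F(G)|}\) preserve real-rootedness, so \(S_G(t;F)\) is real-rooted.

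One point needs care: in the consistent case the polynomial must not vanish identically, or the dichotomy ``zero or real-rooted'' would need no further argument there. The empty independent set contributes the constant term \(1\) to \(\operatorname{I}_{G[W_F]}\), so the right-hand side is nonzero. Concretely, every consistent \(F\) extends to at least one orientation, namely \(2^{|\setE_F(G)|}\) of them. The degenerate case \(W_F=\varnothing\) gives the constant \(2^{|\setE_F(G)|}\), which is trivially real-rooted. There is no real obstacle here; the only step to verify is the heredity of claw-freeness under passing to \(G[W_F]\).
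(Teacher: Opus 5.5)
Your proposal is correct and matches the paper's proof. Both dispose of the inconsistent case, use Proposition~\ref{prop:frozenAllOrientIndependent} to write \(S_G(1+t;F)\) as a positive multiple of the weighted independence polynomial of \(G[W_F]\) with weights \(2^{-r_F(v)}\), note that induced subgraphs of claw-free graphs are claw-free, and apply Theorem~\ref{thm:weightedClawFree}; your extra remark that the polynomial is nonzero in the consistent case is harmless but not needed for the stated dichotomy.
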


\begin{proof}
    The polynomial is zero if \(F\) is inconsistent.  Otherwise, \eqref{eq:frozenIndependent}, after replacing \(t\) by \(1\), is a positive scalar multiple of the weighted independence polynomial of \(G[W_F]\), with vertex weights \(2^{-r_F(v)}\).  Induced subgraphs of claw-free graphs are claw-free, so Theorem~\ref{thm:weightedClawFree} proves the claim.
\end{proof}

For a graph \(G\) and a frozen set \(F\) in \(L(G)\), let
\(\defin{\mathcal E(F)}\subseteq\setE(G)\) be the root edges whose
line-graph vertices lie in \(W_F\), and let \(\defin{G(F)}\) be the spanning
subgraph of \(G\) with edge set \(\mathcal E(F)\).  These are the eligible
sink edges of the root graph.

\begin{corollary}
\label{cor:frozenAllOrientMatching}
    Let \(G\) be a graph and let \(F\) be a frozen set in \(L(G)\).  Then \(S_{L(G)}(t;F)\) is zero if \(F\) is inconsistent.  If \(F\) is
    consistent, then
    \begin{equation}
    \label{eq:frozenAllOrientMatching}
        \frac{S_{L(G)}(1+t;F)}{2^{|\setE_F(L(G))|}} = \mu_{G(F)}\left(t;\left\{2^{-r_F(e)}\right\}_{e\in\mathcal E(F)}\right).
    \end{equation}
    In particular, every nonzero \(S_{L(G)}(t;F)\) is real-rooted.
\end{corollary}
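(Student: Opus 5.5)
The plan is to specialize Proposition~\ref{prop:frozenAllOrientIndependent} to the claw-free graph \(L(G)\) and translate independent sets back to matchings of the root graph. If \(F\) is inconsistent, no orientation contains \(F\) and the polynomial is zero by definition. So assume \(F\) is consistent. Applying \eqref{eq:frozenIndependent} to \(L(G)\) gives
\[
S_{L(G)}(t;F)=2^{|\setE_F(L(G))|}\sum_{I\in\indsets(L(G)[W_F])}(t-1)^{|I|}\prod_{e\in I}2^{-r_F(e)},
\]
where \(W_F\) is now a set of line-graph vertices, that is, of root edges.

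The key identification concerns the induced subgraph \(L(G)[W_F]\). By definition of \(\mathcal E(F)\), this subgraph is the line graph of the spanning subgraph \(G(F)\). Adjacency in \(L(G)[W_F]\) means sharing an endpoint in \(G\), and the same holds in \(L(G(F))\), since both edges lie in \(\mathcal E(F)\). Isolated vertices of \(G(F)\) contribute nothing. Hence independent sets of \(L(G)[W_F]\) are exactly the matchings of \(G(F)\), with \(|I|=|M|\). The weights \(2^{-r_F(e)}\) are attached to vertices of \(L(G)\), so they become edge weights on \(G(F)\); here \(r_F(e)\) is still computed in \(L(G)\), not in \(L(G(F))\). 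Substituting, dividing by \(2^{|\setE_F(L(G))|}\), and replacing \(t\) by \(1+t\) yields \eqref{eq:frozenAllOrientMatching}.

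For real-rootedness, the weights \(2^{-r_F(e)}\) are positive, so Theorem~\ref{thm:heilmannLieb} shows that the right-hand side of \eqref{eq:frozenAllOrientMatching} has only real zeros. Translating back by \(t\mapsto t-1\) and multiplying by the positive constant \(2^{|\setE_F(L(G))|}\) preserves real-rootedness. The polynomial is nonzero because the empty matching contributes \(1\).

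There is no serious obstacle. The only point needing care is the bookkeeping in the identification step: \(W_F\) and \(r_F\) must be read in \(L(G)\), while the matching polynomial lives on the root subgraph \(G(F)\). The argument also bypasses Chudnovsky--Seymour entirely, so this corollary holds for every root graph \(G\).
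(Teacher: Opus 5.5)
Your proposal is correct and follows the paper's proof. Like the paper, you specialize Proposition~\ref{prop:frozenAllOrientIndependent} to \(L(G)\), use the identification \(L(G)[W_F]=L(G(F))\) to turn independent sets into matchings of \(G(F)\), and conclude with Heilmann--Lieb. Your extra bookkeeping is accurate and is left implicit in the paper: \(r_F\) is computed in \(L(G)\), and in the consistent case the empty matching gives constant term \(1\), so the polynomial is nonzero.
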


\begin{proof}
    There is nothing to prove when \(F\) is inconsistent.  Suppose that \(F\) is
    consistent.  Then
    \[
        L(G)[W_F]=L(G(F)).
    \]
    Hence \eqref{eq:frozenIndependent}, after replacing \(t\) by \(1+t\), becomes \eqref{eq:frozenAllOrientMatching}. Theorem~\ref{thm:heilmannLieb} proves the claim.
\end{proof}

\begin{remark}
    Since $L(G)$ is claw-free for every graph $G$, the real-rootedness of $S_{L(G)}(t; F)$ follows directly from Corollary \ref{cor:frozenClawFree}. However, Corollary \ref{cor:frozenAllOrientMatching} presents a formula for $S_{L(G)}(t; F)$ that depends only on the root graph $G$ up to a constant. 
\end{remark}

\section{Realizations and symmetric-function applications}\label{sec:applications}

In this section, we give explicit polynomial families realized by the preceding models, then
return to the chromatic and LLT correspondences motivating our results and
their interpretation in Hessenberg cohomology.  The Carlsson--Mellit relation expresses the same sink polynomial
as a plethystic specialization of a unicellular LLT polynomial.

\subsection{Polynomial families realized by graphs}
By Proposition \ref{prop:cycleSinks}, the even binomial row gives a cycle sink polynomial after its constant term is removed and the result is doubled. We present now other families of graphs which give rise to nice closed formulas for one or more of the polynomials studied in Sections \ref{sec:acyclic-pol} and \ref{sec:allOrient}.

\begin{proposition}\label{prop:explicitRealizations}
Let \(P_n\) denote the path on \(n\geq1\) vertices, let \(D_{p,q}\) be the
tree formed by joining two centers and attaching \(p,q\geq0\) leaves to them,
and let \(F_r\) be the graph formed by \(r\geq1\) triangles sharing one vertex.
Then
\begin{align*}
S_{P_n}^{a}(t)=S_{P_n}(t)=M_{P_{n+1}}(t)
  &=\sum_{k\geq1}\binom{n}{2k-1}t^k,\\
S_{K_n}^{a}(t)=M_{K_{1,n}}(t)&=n!t,\\
S_{L(D_{p,q})}^{a}(t)=M_{D_{p,q}}(t)
  &=p!q!\bigl((p+q+1)t+pq\,t^2\bigr),\\
S_{L(F_r)}^{a}(t)&=3(2r)!\,t(t+2)^{r-1}.
\end{align*}
All these polynomials are real-rooted.
\end{proposition}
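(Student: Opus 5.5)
We read the symbol \(M_G\) in the statement as the minima polynomial \(N_G\) of Definition~\ref{def:minimaPolynomial}. The plan is to treat each family by the most convenient earlier identity: a direct bijective count for paths, the forest correspondence of Theorem~\ref{thm:forestEquality} together with the matching expansion \eqref{eq:minimaExpansion} for the stars and double stars, and the chordal identity \eqref{eq:chordalSinkIdentity} for the friendship-type graph \(F_r\). Real-rootedness in each case will be visible from the closed form, or will follow from the cited theorems.

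\emph{Paths and complete graphs.} The path \(P_n\) is a tree, so every orientation is acyclic and \(S^a_{P_n}=S_{P_n}\). Since \(P_n=L(P_{n+1})\) with \(P_{n+1}\) a forest, Theorem~\ref{thm:forestEquality} gives \(S^a_{P_n}=N_{P_{n+1}}\). For the closed form, I would encode an orientation as a word in \(\{R,L\}\) of length \(n-1\). I then pad the word with a virtual \(R\) on the left and a virtual \(L\) on the right, giving a word of length \(n+1\). Sinks are exactly the occurrences of \(RL\) in this word. The word begins with \(R\) and ends with \(L\), so it has an odd number \(2k-1\) of letter changes among its \(n\) gaps, namely \(k\) of type \(RL\) and \(k-1\) of type \(LR\). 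Since the first letter is fixed, the positions of the changes determine the word. This gives \(\binom{n}{2k-1}\) orientations with \(k\) sinks. Real-rootedness follows from Theorem~\ref{thm:minimaMatching}, or alternatively from the odd part \(A_n\) in the proof of Proposition~\ref{prop:cycleSinks}. For \(K_n=L(K_{1,n})\), acyclic orientations correspond to the \(n!\) linear orders, and each has exactly one sink. The equality with \(N_{K_{1,n}}\) is the argument of Example~\ref{ex:K13}, or again Theorem~\ref{thm:forestEquality}.

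\emph{Double stars.} For \(D_{p,q}\), Theorem~\ref{thm:forestEquality} reduces the claim to computing \(N_{D_{p,q}}\), which I would obtain from \eqref{eq:minimaExpansion}. The centers have degrees \(p+1\) and \(q+1\), and the leaves have degree \(1\). The matchings contribute as follows:
\begin{itemize}
\item the empty matching contributes \((p+1)!(q+1)!\);
\item the central edge contributes \(p!q!\,(t-1)\);
\item each of the \(p\) leaf edges at the first center contributes \(p!(q+1)!\,(t-1)\), and symmetrically each of the \(q\) leaf edges at the second center contributes \((p+1)!q!\,(t-1)\);
\item each of the \(pq\) pairs consisting of one leaf edge at each center contributes \(p!q!\,(t-1)^2\).
\end{itemize}
Collecting powers of \(t\), the constant term cancels, and the remaining terms simplify to \(p!q!\bigl((p+q+1)t+pqt^2\bigr)\). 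Its roots are \(0\) and \(-(p+q+1)/(pq)\), or only \(0\) when \(pq=0\).

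\emph{The graph \(F_r\).} This is the step I expect to require the most care, because \(F_r\) contains cycles. As a result, Theorem~\ref{thm:forestEquality} does not apply, and \(L(F_r)\) is not a block graph in the forest sense. However, \(L(F_r)\) consists of a clique \(K_{2r}\) on the spoke edges \(ca_i,cb_i\), together with a vertex \(x_i=a_ib_i\) adjacent exactly to \(ca_i\) and \(cb_i\). This graph is chordal and claw-free. I would use the reverse perfect elimination ordering that lists the clique first and the vertices \(x_i\) afterwards. This gives earlier degrees \(0,1,\dots,2r-1\) on the clique and \(2\) at each \(x_i\), so \(\ao=(2r)!\,3^r\). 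In \eqref{eq:chordalSinkWeight}, the clique factors telescope: the \(j\)-th clique vertex receives \(\frac1j\prod_{m=j+1}^{2r}\frac{m-1}{m}=\frac1{2r}\), multiplied by the factor \(\frac23\) from its pendant \(x_i\). Hence every spoke has weight \(\frac1{3r}\), and every \(x_i\) has weight \(\frac13\). An independent set contains at most one spoke, and a spoke excludes its own \(x_i\). Therefore
\[
\operatorname{I}(t)=(1+\tfrac t3)^r+2r\cdot\tfrac{t}{3r}(1+\tfrac t3)^{r-1}=(1+\tfrac t3)^{r-1}(1+t).
\]
Multiplying by \((2r)!\,3^r\) and replacing \(t\) by \(t-1\) gives \(3(2r)!\,t(t+2)^{r-1}\), which is visibly real-rooted. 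The main risk in this step is bookkeeping in the weight telescoping, which I would cross-check for \(r=1\), where \(L(K_3)=K_3\) and the answer should be \(6t\).
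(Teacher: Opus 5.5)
Your proposal is correct. For paths, complete graphs and $F_r$ it follows the paper's proof:
\begin{itemize}
\item for paths, the same padded-word encoding giving $\binom{n}{2k-1}$;
\item for the stars, the same appeal to the forest correspondence;
\item for $F_r$, the same reverse PEO (spoke clique first, then the vertices $a_ib_i$), with normalization $(2r)!\,3^r$ and weights $1/(3r)$ and $1/3$.
\end{itemize}
You also read $M_G$ correctly as the minima polynomial $N_G$.

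The one genuine difference is the double star $D_{p,q}$. The paper counts the $(p+1)!(q+1)!$ pairs of local orders at the two centers directly. Two sinks occur exactly when the central edge is minimal at neither center, which gives $pq\,p!q!$ pairs, and every other pair gives exactly one sink. You instead sum the matching expansion \eqref{eq:minimaExpansion} over the $2+p+q+pq$ matchings and check that the constant term cancels. Your arithmetic is right: the linear coefficient is $(1+p+q+2pq)-2pq$ and the quadratic coefficient is $pq$.

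The paper's direct count is shorter and explains both coefficients combinatorially. Your computation is more mechanical, but it needs no case analysis of sink configurations. It also gives an independent check of \eqref{eq:minimaExpansion} on a family where the answer is known.
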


\begin{proof}
For the path, encode each edge direction by a bit, with \(1\) indicating a
rightward edge.  Append a \(1\) before the first bit and a \(0\) after the
last.  A sink is a transition from \(1\) to \(0\), so \(k\) sinks require
\(2k-1\) transitions among \(n\) positions.  Their positions determine the
word.  Every orientation of a path is acyclic, and \(L(P_{n+1})=P_n\).
The complete graph has \(n!\) acyclic orientations, all with one sink; the
star identity follows from the forest correspondence.

The line graph of \(D_{p,q}\) consists of cliques of orders \(p+1\) and
\(q+1\) sharing one vertex.  There are \((p+1)!(q+1)!\) pairs of local
edge orders.  Two sinks occur exactly when the shared vertex is not minimal in
either clique, giving \(pq\,p!q!\) pairs.  Every other pair gives one sink.

For \(L(F_r)\), the \(2r\) edges incident to the common root vertex form a
clique.  Each of the remaining \(r\) vertices has two adjacent neighbors in
this clique, and these neighbor pairs are disjoint.  Insert the central
clique first and the remaining vertices last in a reverse PEO.  The
normalization in \eqref{eq:chordalSinkIdentity} is \((2r)!3^r\).
Each added vertex has weight \(1/3\), and each central vertex has weight
\(1/(3r)\).  Consequently,
\[
    \operatorname{I}_{L(F_r)}(t;\eta)
    =\left( 1+\frac{t}{3} \right)^r+\frac{2t}{3}\left( 1+\frac{t}{3} \right)^{r-1}
    =\left( 1+t \right) \left( 1+\frac{t}{3} \right)^{r-1}.
\]
Substituting $t$ by $t-1$ proves the last formula.  Real-rootedness follows
from the forest and chordal claw-free theorems.
\end{proof}

For cycles, Proposition~\ref{prop:cycleSinks} gives the even-binomial family
\(2\sum_{k\geq1}\binom{n}{2k}t^k\).  The path and cycle rows correspond to
\href{https://oeis.org/A034867}{A034867} and
\href{https://oeis.org/A034839}{A034839}, respectively.
Every path power \(P_n^r\) is also a unit interval graph, so its acyclic sink
polynomial is real-rooted by Theorem~\ref{thm:chordalAcyclic}.

\subsection{An ascent refinement}

A \defin{Hessenberg function} is a nondecreasing map \(h:[n]\to[n]\) with
\(h(i)\geq i\).  Its \defin{natural unit interval graph} \(H_h\) has edges
\(\{i,j\}\) whenever \(i<j\leq h(i)\).  Both the earlier and the later
neighbors of a vertex form cliques.  In particular, the natural order is a
reverse PEO and the graph is claw-free.

For an orientation \(\mathcal O\) of this labeled graph, let
\(\defin{\operatorname{asc}(\mathcal O)}\) count its directed edges
\(i\to j\) with \(i<j\), and define
\[
\defin{S_{H_h}^{a}(t;q)}
=\sum_{\mathcal O\in\AO(H_h)}q^{\operatorname{asc}(\mathcal O)}
 t^{\sink(\mathcal O)}.
\]
At \(q=1\), this is the ordinary acyclic sink polynomial.

\begin{theorem}\label{thm:unitIntervalAscent}
For every Hessenberg function \(h\) and every real \(q\geq0\),
\(S_{H_h}^{a}(t;q)\) has only real nonpositive zeros.
\end{theorem}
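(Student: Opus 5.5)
Since the natural order \(1,\dots,n\) is a reverse PEO of \(H_h\), I plan to run the marked recurrence of Proposition~\ref{prop:markedChordalModel} with the ascent weight inserted. The target is a weighted identity
\[
\sum_{\mathcal O\in\AO(H_h)}q^{\operatorname{asc}(\mathcal O)}(1+t)^{|\operatorname{Sink}(\mathcal O)\cap W|}
= D(q)\,\operatorname{I}_{H_h,W}(t;\theta(q))
\]
for all \(W\), with \(D(q)>0\) and \(\theta(q)\ge0\) when \(q>0\). Setting \(W=\setV(H_h)\) and using Theorem~\ref{thm:weightedClawFree} then gives the claim, since \(H_h\) is claw-free. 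The case \(q=0\) follows by continuity: it is a limit of polynomials of fixed degree with only real nonpositive zeros. One can also check it directly, because then only the orientation with all edges pointing to smaller labels survives, which contributes \(t^{\#\{\text{vertices with no earlier neighbor}\}}\).

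The insertion step goes as follows. Let \(v=n\) be the last vertex and \(K=N(v)\), a clique of earlier vertices with \(|K|=d\). Fix an acyclic orientation of \(H=H_h-v\), and let \(u_1\to\dots\to u_d\) be its induced order on \(K\). A cut \(r\) directs \(u_1,\dots,u_r\) into \(v\), and each of these is an ascent since \(u_i<v\). It directs \(v\) into the remaining vertices, which are descents. So cut \(r\) carries weight \(q^r\). The sink bookkeeping is exactly as in the proof of Proposition~\ref{prop:markedChordalModel}:
\begin{itemize}
\item if \(r<d\), the sink set is unchanged;
\item if \(r=d\), the vertex \(v\) becomes a sink and the marked sinks in \(K\) are destroyed.
\end{itemize}
The key point is that the cut weights \(q^r\) do not depend on which vertex of \(K\) is \(u_i\). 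This yields
\[
S_{G,W}(t;q)=[d]_q\,S_{H,T}(t;q)+q^d\bigl(1+t\mathbf 1_{v\in W}\bigr)S_{H,T\setminus K}(t;q),
\]
where \([d]_q=1+q+\dots+q^{d-1}\).

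I then mirror the induction, with \(c=[d]_q+q^d=[d+1]_q\). Define \(\alpha_u=\frac{[d]_q}{c}\theta_u\) for \(u\in K\) and \(\alpha_u=\theta_u\) otherwise, set \(\theta'_v=q^d/c\), and take \(D'=cD\). The clique-splitting identity \eqref{eq:markedCliqueScaling} becomes \(c\,\operatorname{I}_{H,T}(\alpha)=[d]_q\operatorname{I}_{H,T}(\theta)+q^d\operatorname{I}_{H,T\setminus K}(\theta)\). The independence recurrence at \(v\) then adds \(\mathbf 1_{v\in W}\,t\,\frac{q^d}{c}\operatorname{I}_{H,T\setminus K}(\theta)\), and the two sides agree with the recurrence above.

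The only delicate point I expect is checking that the \(q\)-weighted cut analysis really is uniform, and it is: all of \(K\) precedes \(v\) in the labeling, so every edge into \(v\) is an ascent. This is exactly where the natural labeling of \(H_h\) is used. The weights are positive for \(q>0\), and the induction closes.
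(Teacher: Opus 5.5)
Your proposal is correct and follows the paper's proof essentially step for step: the same $q$-weighted cut recurrence $S^a_{G,W}=[d]_q S^a_{H,T}+q^d(1+t\mathbf{1}_{v\in W})S^a_{H,T\setminus K}$, the same choices $c=[d+1]_q$, $\theta'_v=q^d/c$, $\theta'_u=[d]_q\theta_u/c$ on $K$, $D'=cD$, and the same appeal to Theorem~\ref{thm:weightedClawFree}. The differences are minor. The paper notes that these weights are already nonnegative at $q=0$ (since $c\geq 1$), so no separate continuity argument is needed. It also spells out a step you leave implicit: translating back only places the zeros in $t\leq 1$, and nonpositivity follows because $S^a_{H_h}(t;q)$ is a nonzero polynomial with nonnegative coefficients.
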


\begin{proof}
We use the marked-sink induction of Proposition~\ref{prop:markedChordalModel},
now assigning weight \(q^{\operatorname{asc}(\mathcal O)}\) to each orientation.
Let $G = H_h$ and write \(S^a_{G,W}(t;q)\) for the resulting marked polynomial.  When the last
vertex \(v\) is inserted, use the same notation \(H,K,d,T\) as in that
proof, and put \([d]_q=1+q+\dotsb+q^{d-1}\), with \([0]_q=0\).
A cut at position \(r\) adds exactly \(r\) ascents, because every old
vertex has a smaller label than \(v\).  Thus
\[
    S^a_{G,W}(t;q) = [d]_q \cdot S^a_{H,T}(t;q) + q^d (1+ t \cdot \mathbf1_{v\in W}) \cdot S^a_{H,T\setminus K}(t;q).
\]
Suppose the old marked polynomials have the simultaneous representation
\(D \cdot \operatorname{I}_{G,T}(t;\theta)\).  Set
\[
    c=[d+1]_q=[d]_q+q^d,\qquad D'=cD,
\]
\[
\theta'_v=\frac{q^d}{c},
\qquad
\theta'_u=
\begin{cases}
[d]_q\theta_u/c,&u\in K,\\
\theta_u,&u\notin K.
\end{cases}
\]
Since an independent set meets \(K\) at most once, these weights reproduce
the displayed recurrence, exactly as in
\eqref{eq:markedCliqueScaling}--\eqref{eq:markedIndependenceRecurrence}.
Starting with \(D=1\) on the empty graph proves the representation for every
marked set.  For \(q\geq0\), we have \(c>0\), \(D'>0\), and all weights
are nonnegative, including at \(q=0\).  Taking all vertices marked and
applying Theorem~\ref{thm:weightedClawFree} proves real-rootedness after
translation.  The original coefficients are nonnegative and the polynomial
is nonzero: the orientation directing every edge toward the smaller label
has ascent weight one.  Therefore no zero is positive.
\end{proof}

\subsection{Hessenberg cohomology}
Let \(X= \{x_1, x_2, \ldots \}\) be a symmetric-function alphabet, and write
\(e_\lambda\) and \(h_\lambda\) for the elementary and complete homogeneous
bases.  The \defin{chromatic quasisymmetric function} is
\[
X_{G}(X;q)=\sum_{\kappa\text{ proper coloring}}
 q^{|\{\{i,j\}\in\setE(G):i<j,\ \kappa(i)<\kappa(j)\}|}
 \prod_{i=1}^n x_{\kappa(i)}.
\]
It is symmetric for unit interval graphs, that is, for $G = H_h$ for some Hessenberg function $h$. If
\(X_{H_h}(X;q)=\sum_{\lambda\vdash n}c_\lambda(q)e_\lambda(X)\), then
J.~Shareshian and M.~L.~Wachs~\cite[Theorem~5.3]{ShareshianWachs2016} prove
\begin{equation}\label{eq:elementaryLengthSinks}
\sum_{\lambda\vdash n}c_\lambda(q)t^{\ell(\lambda)}
=S_{H_h}^{a}(t;q),
\end{equation}
where \(\ell(\lambda)\) is the number of parts of \(\lambda\).
Theorem~\ref{thm:unitIntervalAscent} therefore gives real-rootedness in the
partition-length variable for every fixed \(q\geq0\).

For a regular semisimple linear map \(A:\mathbb C^n\to\mathbb C^n\), let
\[
\operatorname{Hess}(A,h)
=\{V_\bullet:\dim V_i=i,\ A V_i\subseteq V_{h(i)}\}
\]
be the associated Hessenberg variety of complete flags.  P.~Brosnan and
T.~Y.~Chow~\cite{BrosnanChow2018}, and independently
M.~Guay-Paquet~\cite{GuayPaquet2016}, identify the graded Frobenius
characteristic of Tymoczko's dot action as
\begin{equation}\label{eq:hessenbergFrobenius}
\mathcal F_h(X;q)
\coloneqq\sum_{j\geq0}q^j
 \operatorname{ch}H^{2j}(\operatorname{Hess}(A,h);\mathbb C)
=\omega X_{H_h}(X;q),
\end{equation}
where \(\omega(e_\lambda)=h_\lambda\).

\begin{corollary}\label{cor:hessenbergLength}
Write \(\mathcal F_h(X;q)=\sum_{\lambda\vdash n}b_\lambda(q)h_\lambda(X)\).
For every real \(q\geq0\), the polynomial
\[
\sum_{\lambda\vdash n}b_\lambda(q)t^{\ell(\lambda)}
\]
is real-rooted, with nonnegative coefficients, and its coefficient sequence
is log-concave with no internal zeros.
\end{corollary}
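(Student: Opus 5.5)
The plan is to transport the statement through the involution \(\omega\) and reduce it to Theorem~\ref{thm:unitIntervalAscent}. By \eqref{eq:hessenbergFrobenius}, \(\mathcal F_h(X;q)=\omega X_{H_h}(X;q)\). Writing \(X_{H_h}(X;q)=\sum_\lambda c_\lambda(q)e_\lambda(X)\) and using \(\omega(e_\lambda)=h_\lambda\), we get \(\mathcal F_h(X;q)=\sum_\lambda c_\lambda(q)h_\lambda(X)\). The \(h_\lambda\) form a basis of the degree-\(n\) symmetric functions, so uniqueness of the expansion gives \(b_\lambda(q)=c_\lambda(q)\) for every \(\lambda\vdash n\). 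Consequently
\[
\sum_{\lambda\vdash n}b_\lambda(q)t^{\ell(\lambda)}=\sum_{\lambda\vdash n}c_\lambda(q)t^{\ell(\lambda)}=S^a_{H_h}(t;q),
\]
where the last equality is the Shareshian--Wachs identity \eqref{eq:elementaryLengthSinks}.

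Next I fix a real \(q\geq0\). Theorem~\ref{thm:unitIntervalAscent} shows that \(S^a_{H_h}(t;q)\) has only real nonpositive zeros. Its coefficient of \(t^k\) is \(\sum q^{\operatorname{asc}(\mathcal O)}\), summed over acyclic orientations with \(k\) sinks, so every coefficient is nonnegative. The polynomial is also nonzero: the orientation toward smaller labels contributes \(q^0t^{\sink}\), as noted at the end of that proof. Hence we may write it as \(C\prod_i(t+r_i)\) with \(C>0\) and all \(r_i\geq0\).

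Log-concavity then follows from Newton's inequalities for a real-rooted polynomial with nonnegative coefficients. To get no internal zeros, I would use that a product of linear factors \(t+r_i\) with \(r_i\geq0\) has coefficients supported on an interval: the lowest degree appearing is the number of \(r_i=0\), and every degree from there up to the top carries a strictly positive coefficient. Alternatively, one can invoke the standard fact that nonnegative real-rooted sequences are log-concave without internal zeros.

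The only delicate point is the identification \(b_\lambda=c_\lambda\). This relies on \(\omega\) being a linear involution that sends the \(e\)-basis bijectively onto the \(h\)-basis with the same index \(\lambda\), so partition length is preserved. I do not expect a real obstacle here. For \(q=0\) the argument still goes through, since Theorem~\ref{thm:unitIntervalAscent} covers \(q\geq0\) and nonvanishing was checked above.
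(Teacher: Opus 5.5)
Your proposal is correct and follows the paper's proof. You identify the length-graded polynomial with \(S^a_{H_h}(t;q)\) via \(\omega(e_\lambda)=h_\lambda\), \eqref{eq:hessenbergFrobenius} and \eqref{eq:elementaryLengthSinks}, and then invoke Theorem~\ref{thm:unitIntervalAscent}; you additionally spell out the consequences the paper's one-line proof leaves implicit: nonnegativity, nonvanishing, Newton's inequalities, and the interval support of \(C\prod_i(t+r_i)\) with \(r_i\geq0\).
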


\begin{proof}
Equations \eqref{eq:elementaryLengthSinks} and
\eqref{eq:hessenbergFrobenius} identify this polynomial with
\(S_{H_h}^{a}(t;q)\), from which the real-rootedness and log-concavity follow.
\end{proof}

Here \(q\) records cohomological half-degree and \(t\) records the number
of parts in the homogeneous-basis expansion.  Thus the conclusion concerns
this specialization of the Frobenius characteristic; the ordinary Poincar\'e
polynomial uses dimensions of the cohomology groups instead.

\subsection{The LLT connection}

In the same ascent convention, the unicellular LLT polynomial is
\[
\mathcal L_h(X;q)=\sum_{\kappa:[n]\to\mathbb Z_{>0}}
 q^{|\{\{i,j\}\in\setE(H_h):i<j,\ \kappa(i)<\kappa(j)\}|}
 \prod_{i=1}^n x_{\kappa(i)}.
\]
The sum includes all colorings.  Let \(\Phi_t\) be the homomorphism defined
by \(\Phi_t(e_m)=t\) for \(m\geq1\).  The Carlsson--Mellit
relation~\cite[Lemma~6.1]{AlexanderssonPanova2018} gives
\[
\mathcal L_h[(q-1)X;q]=(q-1)^nX_{H_h}(X;q).
\]
Here plethystic substitution means \(p_k[(q-1)X]=(q^k-1)p_k[X]\).
Consequently,
\[
S_{H_h}^{a}(t;q)
=\Phi_t\!\left((q-1)^{-n}\mathcal L_h[(q-1)X;q]\right).
\]
Theorem~\ref{thm:unitIntervalAscent} proves that this specialization is
real-rooted in \(t\) for every fixed \(q\geq0\).
To recover the ordinary sink polynomial
at \(q=1\), we first cancel \((q-1)^n\) using this polynomial identity.

\subsubsection*{Half-sink products}

Half-sinks give a direct product in both orientation classes.  Expand the
shifted LLT polynomial as
\[
\mathcal L_h(X;1+u)
=\sum_{\lambda\vdash n}d_\lambda(u)e_\lambda(X).
\]
A \defin{half-sink} of an orientation is a vertex with no outgoing edge to a
larger-labeled vertex.  Write \(\operatorname{hsink}(\mathcal O)\) for their
number.  P.~Alexandersson and G.~Panova~\cite[Proposition~5.2]{AlexanderssonPanova2018}
prove
\begin{equation}\label{eq:lltHalfSinks}
\sum_{\lambda\vdash n}d_\lambda(u)t^{\ell(\lambda)}
=\sum_{\mathcal O \in \Or(H_h)}
 u^{\operatorname{asc}(\mathcal O)}t^{\operatorname{hsink}(\mathcal O)}.
\end{equation}

\begin{proposition}\label{prop:lltLengthProduct}
Let \(a_i=h(i)-i\) be the number of larger-labeled neighbors of \(i\).
For \(z\geq0\), we have
\begin{align*}
\sum_{\mathcal O \in \Or(H_h)}
 z^{\operatorname{asc}(\mathcal O)}t^{\operatorname{hsink}(\mathcal O)}
 &=\prod_{i=1}^n\bigl(t+(1+z)^{a_i}-1\bigr),\\
\sum_{\mathcal O\in\AO(H_h)}
 z^{\operatorname{asc}(\mathcal O)}t^{\operatorname{hsink}(\mathcal O)}
 &=\prod_{i=1}^n\bigl(t+z+z^2+\dotsb+z^{a_i}\bigr),
\end{align*}
where an empty sum is zero.  In particular,
\begin{equation}\label{eq:lltLengthProduct}
\sum_{\lambda\vdash n}d_\lambda(u)t^{\ell(\lambda)}
=\prod_{i=1}^n\bigl(t+(1+u)^{a_i}-1\bigr).
\end{equation}
For every real \(u\geq0\), its zeros are real and nonpositive.
\end{proposition}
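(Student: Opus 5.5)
The plan is to show that the half-sink statistic factors vertex by vertex once each orientation is read through the out-edges from each vertex to its larger neighbours. For vertex \(i\), let \(U_i=\{i+1,\dotsc,h(i)\}\) be its larger neighbours, with \(|U_i|=a_i\). Every edge of \(H_h\) is \(\{i,j\}\) with \(j\in U_i\) for a unique smaller endpoint \(i\). So an orientation is the same as an independent choice, for each \(i\), of the subset \(A_i\subseteq U_i\) of edges directed \(i\to j\). These are the ascents at \(i\), and \(i\) is a half-sink exactly when \(A_i=\varnothing\).

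\textbf{All orientations.} Under this parametrisation the weight \(z^{\operatorname{asc}}t^{\operatorname{hsink}}\) is the product over \(i\) of \(z^{|A_i|}t^{\mathbf 1_{A_i=\varnothing}}\). Summing over \(A_i\subseteq U_i\) gives the factor
\[
t+\sum_{\varnothing\neq A\subseteq U_i}z^{|A|}=t+(1+z)^{a_i}-1 .
\]
This proves the first identity.

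\textbf{Acyclic orientations.} Here I would reuse the insertion argument from Proposition~\ref{prop:markedChordalModel}, but run it in the reverse order: insert vertices \(n,n-1,\dotsc,1\). In \(H_h\) the later neighbours \(U_i\) of a vertex form a clique, so the order \(n,\dotsc,1\) is again a reverse PEO, and when \(i\) is inserted its earlier-inserted neighbours are exactly \(U_i\).

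Fix an acyclic orientation of the graph on \(\{i+1,\dotsc,n\}\). It induces a total order \(u_1\to\dotsb\to u_{a_i}\) on the clique \(U_i\). As shown in that proof, the acyclic extensions are exactly the \(a_i+1\) cuts \(r\in\{0,\dotsc,a_i\}\), where \(u_1,\dotsc,u_r\) point to \(i\) and \(i\) points to the rest. Every edge at \(i\) involves a larger label, so:
\begin{itemize}
\item a cut with \(r<a_i\) gives \(i\) exactly \(a_i-r\geq1\) ascents, and \(i\) is not a half-sink;
\item the cut \(r=a_i\) makes \(i\) a half-sink with no ascents.
\end{itemize}
The key point is that half-sink status of the already inserted vertices is unaffected, since it depends only on edges to larger labels. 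Hence inserting \(i\) multiplies the generating function by
\[
t+z+z^2+\dotsb+z^{a_i},
\]
and an induction from the empty graph gives the second product. This is the step where care is needed: one must check that the old statistics are unchanged, and that the cut count is independent of the orientation being extended.

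\textbf{Remaining assertions.} Setting \(z=u\) in the first identity and combining with \eqref{eq:lltHalfSinks} yields \eqref{eq:lltLengthProduct}. For real \(u\geq0\), each linear factor has the single root \(t=1-(1+u)^{a_i}\leq0\). So all zeros of the product are real and nonpositive, and the same holds for the acyclic product with \(z\geq0\).
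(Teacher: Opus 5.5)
Your proof is correct and follows essentially the same route as the paper: for all orientations you group edges by their smaller endpoint, and for acyclic orientations you insert vertices in decreasing label order, so that the cut with no outgoing edge contributes \(t\) and the other cuts contribute \(z,\dotsc,z^{a_i}\). You also spell out details the paper leaves implicit: that \(U_i\) is a clique (so the decreasing order is a reverse PEO), that a cut at \(r\) adds exactly \(a_i-r\) ascents, and that the number of cuts does not depend on the orientation being extended.
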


\begin{proof}
For the first product, group the edges by their smaller endpoint.  The
\(a_i\) edges in group \(i\) can be oriented independently of every other group.  Choosing no
ascending edge contributes \(t\), since \(i\) is then a half-sink.  The
nonempty choices contribute
\(\sum_{k=1}^{a_i}\binom{a_i}{k}z^k=(1+z)^{a_i}-1\).
Multiplying these contributions and using \eqref{eq:lltHalfSinks} proves the
identity.  The roots are \(1-(1+u)^{a_i}\leq0\).

For the acyclic product, insert vertices in decreasing label order.  The
larger-labeled neighbors of \(i\) form a clique, with a total order in every
acyclic orientation.  We may insert \(i\) at any of its \(a_i+1\) cuts.
The cut with no outgoing edge makes \(i\) a half-sink and contributes
\(t\); the other cuts contribute \(z,z^2,\dotsc,z^{a_i}\).
Each old vertex gains only a neighbor with a smaller label, so its half-sink
status is unchanged.  Multiplying the insertion factors proves the formula.
\end{proof}

At \(z=1\), the two half-sink polynomials are
\[
\prod_i(t+2^{a_i}-1)
\qquad\text{and}\qquad
\prod_i(t+a_i),
\]
respectively.  Their factors are read directly from the area sequence of the unit interval graph \cite{AlexanderssonPanova2018}.

\section{X-descents, sink extraction, and real-rooted specializations}
\label{sec:xDescents}

In this section, we show that we can recover both sink polynomials and \(X\)-Eulerian polynomials from
fundamental quasisymmetric expansions.  The distinction lies in the linear
map we apply: fundamental length records the number of descents, whereas
Stanley's map extracts specified descent sets.  On symmetric functions,
the latter agrees with the elementary-length homomorphism \(\Phi_t\).

\subsection{Fundamental length and principal specialization}

For a composition \(\alpha\) of \(n\geq1\), let
\(S(\alpha)\subseteq[n-1]\) be its set of proper partial sums.  We use
Gessel's fundamental quasisymmetric function
\[
\defin{F_\alpha}=F_{n,S(\alpha)}
=\sum_{\substack{1 \leq i_1\leq\dotsb\leq i_n\\
                 j\in S(\alpha)\Rightarrow i_j<i_{j+1}}}
 x_{i_1}\dotsm x_{i_n}.
\]
We define the linear map \(\Psi_t\) on positive-degree quasisymmetric
functions by
\[
\defin{\Psi_t(F_\alpha)}\coloneqq t^{\ell(\alpha)}
=t^{|S(\alpha)|+1}.
\]
We set \(\Psi_t(1)=1\).  This map is not multiplicative: the identity
\(F_{1,\varnothing}^2=F_{2,\varnothing}+F_{2,\{1\}}\) gives
\[
\Psi_t(F_{1,\varnothing}^2)=t+t^2,
\qquad \Psi_t(F_{1,\varnothing})^2=t^2.
\]
In contrast, \(\Phi_t\) is the algebra homomorphism on symmetric functions
determined by \(\Phi_t(e_m)=t\), so
\(\Phi_t(e_\lambda)=t^{\ell(\lambda)}\).

\begin{proposition}\label{prop:fundamentalChromatic}
    If \(f\) is homogeneous of degree \(n\geq1\), then
    \begin{equation}\label{eq:fundamentalPrincipal}
        \Psi_t(f)=(1-t)^{n+1}\sum_{m\geq0}f(1^m)t^m,
    \end{equation}
    where $1^m = (\overbrace{1, \ldots, 1}^{m}, 0, 0, \ldots)$. Consequently, for a graph \(G\) on \(n\) vertices,
    \begin{equation}
    \label{eq:chromaticWPolynomial}
        \Psi_t(X_G)=(1-t)^{n+1}\sum_{m\geq0}\chi_G(m)t^m=w_G(t),
    \end{equation}
    where \(w_G\) is the \(w\)-polynomial of F.~Brenti~\cite[Proposition~4.1]{Brenti1992}. For homogeneous symmetric \(f\), we also have
    \begin{equation}
    \label{eq:fundamentalOmega}
        \Psi_t(\omega f)=t^{n+1}\Psi_{1/t}(f).
    \end{equation}
\end{proposition}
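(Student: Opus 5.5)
By linearity it suffices to prove \eqref{eq:fundamentalPrincipal} for a single fundamental function \(f=F_{n,S}\) with \(|S|=k\). We will compute its principal specialization \(F_{n,S}(1^m)\) and then sum the resulting generating function.

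For \(F_{n,S}(1^m)\), we count sequences \(1\le i_1\le\dots\le i_n\le m\) that are strict at the positions in \(S\). Subtracting \(j\) from the entries after the \(j\)-th strict step turns these into weakly increasing sequences in \([m-k]\). Hence
\[
F_{n,S}(1^m)=\binom{m-k+n-1}{n}.
\]
The standard identity \(\sum_{m\ge0}\binom{m-k+n-1}{n}t^m=t^{k+1}/(1-t)^{n+1}\) then gives
\[
(1-t)^{n+1}\sum_m F_{n,S}(1^m)t^m=t^{k+1}=\Psi_t(F_{n,S}).
\]
For this identity, shift the summation index to \(m=k+1+r\), so the coefficient becomes \(\binom{n+r}{n}\), and apply \(\sum_r\binom{n+r}{r}t^r=(1-t)^{-n-1}\). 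The terms with \(m\le k\) vanish because their binomial coefficient is zero when \(n\ge1\). This proves \eqref{eq:fundamentalPrincipal}. The only delicate point is making sure these boundary terms really vanish; this is exactly where the hypothesis \(n\ge1\) is used.

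For \eqref{eq:chromaticWPolynomial}, apply \eqref{eq:fundamentalPrincipal} to \(f=X_G\). Since \(X_G(1^m)\) counts proper colorings with \(m\) colors, it equals \(\chi_G(m)\). The identification with Brenti's \(w_G\) is then just his definition, or his Proposition~4.1, which expresses \(w_G\) through \(\sum_m\chi_G(m)t^m\).

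For \eqref{eq:fundamentalOmega}, we use that on symmetric functions \(\omega\) extends to quasisymmetric functions by \(\omega(F_{n,S})=F_{n,[n-1]\setminus S}\), the complement convention under which \(\omega\) is an involution compatible with the symmetric one. Then
\[
\Psi_t(\omega F_{n,S})=t^{n-|S|}=t^{n+1}\,t^{-(|S|+1)}=t^{n+1}\Psi_{1/t}(F_{n,S}),
\]
and linearity finishes the proof. The one care point here is the convention: we must use the complement map (rather than reverse-complement) or note that both give the same \(\Psi_t\), since \(\Psi_t\) depends only on \(|S|\). Alternatively, one can avoid \(\omega\) on quasisymmetric functions by combining \eqref{eq:fundamentalPrincipal} with reciprocity \(\omega f(1^m)=(-1)^n f(1^{-m})\) for the polynomial \(m\mapsto f(1^m)\) and Popoviciu/Stanley reciprocity of rational generating functions; we would use that only as a fallback.
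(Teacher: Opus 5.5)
Your proof is correct and follows the paper's route. You use the same principal specialization $F_{n,S}(1^m)=\binom{m+n-1-|S|}{n}$, summed to $t^{|S|+1}/(1-t)^{n+1}$, then $X_G(1^m)=\chi_G(m)$, then complementation of descent sets for \eqref{eq:fundamentalOmega}. The one difference is that you take as known that $F_{n,S}\mapsto F_{n,[n-1]\setminus S}$ restricts to $\omega$ on symmetric functions; the paper proves exactly this by expanding $s_\lambda$ over standard Young tableaux and noting that transposition complements descent sets while $\omega s_\lambda=s_{\lambda'}$.
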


\begin{proof}
If \(|S|=d\), then
\(F_{n,S}(1^m)=\binom{m+n-1-d}{n}\), so
\[
\sum_{m\geq0}F_{n,S}(1^m)t^m
=\frac{t^{d+1}}{(1-t)^{n+1}}.
\]
Linearity proves \eqref{eq:fundamentalPrincipal}, and
\(X_G(1^m)=\chi_G(m)\) gives \eqref{eq:chromaticWPolynomial}.
For \eqref{eq:fundamentalOmega}, expand a Schur function in the fundamental
basis over standard Young tableaux.  Transposing a tableau complements its
descent set in \([n-1]\), changing the exponent \(d+1\) to \(n-d\).
Since \(\omega s_\lambda=s_{\lambda'}\), linearity gives the result.
\end{proof}

\begin{corollary}[\cite{Brenti1992}]\label{cor:fundamentalChordal}
For every nonempty chordal graph \(G\), both \(\Psi_t(X_G)\) and
\(\Psi_t(\omega X_G)\) have only real nonpositive zeros.
In particular, this holds for every natural unit interval graph.
\end{corollary}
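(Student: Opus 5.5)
The plan is to run an induction along a reverse perfect elimination ordering \(v_1,\dotsc,v_n\) of \(G\). Set \(G_k=G[\{v_1,\dots,v_k\}]\). As in the proof of Theorem~\ref{thm:chordalAcyclic},
\[
\chi_{G_k}(x)=\prod_{j\le k}(x-d_j),\qquad 0\le d_j\le j-1 .
\]
Write \(W_k(t)=(1-t)^{k+1}\sum_{m\ge0}\chi_{G_k}(m)t^m\). By \eqref{eq:chromaticWPolynomial}, \(W_k=\Psi_t(X_{G_k})\). The base case is \(W_1(t)=(1-t)^2\sum_m m\,t^m=t\), which has only the zero \(0\).

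For the inductive step, set \(A_k(t)=\sum_m\chi_{G_k}(m)t^m\). Multiplying \(\chi_{G_k}\) by \(x-d\) acts on \(A_k\) as \(tD-d\), where \(D=d/dt\). With \(a=k+1\) and \(b=d=d_{k+1}\le k<a\), a direct computation gives
\[
W_{k+1}=t(1-t)W_k'+\bigl(at-b(1-t)\bigr)W_k
= t^{b+1}(1-t)^{a+1}\,D\!\left[t^{-b}(1-t)^{-a}W_k\right].
\]
This identity is the engine of the proof. I would then apply Rolle's theorem to \(\phi(t)=t^{-b}(1-t)^{-a}W_k(t)\) on \((-\infty,0)\). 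Between consecutive distinct negative zeros of \(W_k\), the derivative \(\phi'\) has a zero. Multiple zeros of \(W_k\) persist with multiplicity reduced by one.

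Two extra zeros come from the ends. Since \(\deg W_k\le k<a+b\), we have \(\phi(t)\to0\) as \(t\to-\infty\), which forces a zero of \(\phi'\) to the left of the smallest negative zero of \(W_k\). Near \(0\), the factor \(t^{b+1}\) together with the order of vanishing of \(W_k\) at \(0\) accounts for the zero at the origin. Specifically, if \(W_k\) has a zero of order \(s\ge1\) at \(0\), then \(W_{k+1}\) vanishes there to order at least \(s\). The behaviour differs slightly according to whether \(s<b\), \(s=b\) or \(s>b\); the case \(s=b\) is where the leading coefficient of the bracket can cancel, and it must be checked separately. Near \(0^-\), \(\phi\) either blows up or tends to a nonzero limit, which gives one more Rolle zero between the largest negative zero of \(W_k\) and \(0\) whenever needed.

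Counting then shows that \(W_{k+1}\), of degree at most \(k+1\), has as many real nonpositive zeros, with multiplicity, as its degree. This completes the induction and yields \(\Psi_t(X_G)=W_n\) real-rooted with nonpositive zeros.

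The main obstacle is this zero bookkeeping: the multiplicity at \(t=0\) (in particular the case \(s=b\)) and verifying \(\deg W_{k+1}\) against the number of zeros found. It uses crucially that \(d_{k+1}<k+1\), so that \(a>b\) and the decay at \(-\infty\) holds.

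For \(\omega X_G\), apply \eqref{eq:fundamentalOmega}: \(\Psi_t(\omega X_G)=t^{n+1}W_n(1/t)\). Since \(W_n(0)=0\) and \(\deg W_n\le n\), this is a genuine polynomial. It is the reversal of \(W_n\) up to a power of \(t\), and its nonzero roots are reciprocals of negative reals, hence negative, while any remaining roots are at \(0\). Finally, natural unit interval graphs are chordal because the natural order is a reverse PEO, which gives the last sentence of Corollary~\ref{cor:fundamentalChordal}.
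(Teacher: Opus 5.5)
The paper's proof is a citation: it takes Brenti's theorem that $w_G$ is real-rooted with nonnegative coefficients for supersolvable (equivalently chordal) graphs, then applies \eqref{eq:chromaticWPolynomial} and \eqref{eq:fundamentalOmega}. Your Rolle induction along a reverse PEO would be a genuinely different, self-contained route, and your operator identity $W_{k+1}=t^{b+1}(1-t)^{a+1}D\bigl[t^{-b}(1-t)^{-a}W_k\bigr]$ is correct.

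\textbf{The gap at $t=0$.} The zero bookkeeping at the origin, which you rightly call the crux, is wrong as written. Write $W_k=t^sU$ with $U(0)\neq0$, $\deg U=r=k-s$, and leading coefficient $u_r>0$. Then
\[
W_{k+1}=t^sV,\qquad V=(s-b)(1-t)U+atU+t(1-t)U',
\]
so $V(0)=(s-b)U(0)$, while the leading coefficient of $V$ is $(b+1)u_r$.

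Rolle's theorem gives a zero of $\phi'$ between the largest negative zero of $W_k$ and $0$ only when $\phi(t)\to0$ as $t\to0^-$, that is, when $s>b$. A blow-up or a nonzero limit gives nothing. For $s=b$, the lost zero is recovered at the origin, since $V=t\bigl(aU+(1-t)U'\bigr)$. For $s<b$ nothing recovers it: $U(0)>0$, so $V(0)<0<(b+1)u_r$, and $W_{k+1}$ has a positive zero. So the case $s<b$ cannot be handled and must be excluded.

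\textbf{The missing idea.} What rules out $s<b$ is not the hypothesis you single out. Decay at $-\infty$ holds for every $b\geq0$, since $\phi(t)=O(|t|^{-1-b})$ as $t\to-\infty$. Moreover, the constraints $0\le d_j\le j-1$ alone are not enough: $d=(0,0,2)$ gives $W_3=t(3t^2+4t-1)$, which has a positive root.

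What you need is $b=d_{k+1}\le s$. Since $(1-t)^{-k-1}$ is a unit at $t=0$, $s$ is the least $m\geq0$ with $\chi_{G_k}(m)\neq0$, i.e.\ the chromatic number of $G_k$. This is at least $d_{k+1}$, because the earlier neighbours of $v_{k+1}$ form a clique of size $d_{k+1}$ in $G_k$.

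With this inserted, the count closes:
\begin{itemize}
\item for $b<s$, you get $0$ with multiplicity $s$ and $k+1-s$ negative zeros (the two end zeros coincide when $U$ is constant);
\item for $b=s$, you get $0$ with multiplicity $s+1$ and $k-s$ negative zeros;
\item $\deg W_{k+1}=k+1$, because its leading coefficient is $(d_{k+1}+1)u_r$.
\end{itemize}

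Your reversal argument for $\omega X_G$ and the remark on natural unit interval graphs are fine. Once repaired, your route gives an elementary proof of the chordal case of Brenti's theorem in place of the citation.
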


\begin{proof}
Brenti proves real-rootedness of \(w_G\) for supersolvable graphs
\cite[Theorem~4.11]{Brenti1992}, which are exactly the chordal graphs
\cite[Theorem~4.15]{Brenti1992}.  Its coefficients are nonnegative by
\cite[Proposition~4.5]{Brenti1992}.  Apply
\eqref{eq:chromaticWPolynomial} and \eqref{eq:fundamentalOmega}.
\end{proof}

The involution \(\omega\) therefore reverses the fundamental-length
polynomial, with exponent \(n+1\) in our normalization.  If we instead
send \(F_{n,S}\) to \(t^{|S|}\), the reversal exponent is \(n-1\).
This complementation accounts for the passage from \(X_G\) to
\(\omega X_G\); the map that recovers sinks uses different weights.

\subsection{Stanley's sink map}

In the proof of his sink theorem, R.~P.~Stanley
\cite[Theorem~3.3, p.~177]{Stanley1995} defines a linear map on fundamental
quasisymmetric functions.  In our notation, we write
\begin{equation}\label{eq:stanleySinkMap}
\defin{\varphi_t(F_{n,S})}\coloneqq
\begin{cases}
t(t-1)^i,& S=\{i+1,\dotsc,n-1\},\quad 0\leq i\leq n-1,\\
0,&\text{otherwise},
\end{cases}
\end{equation}
and set \(\varphi_t(1)=1\).  Thus the full descent set contributes \(t\),
and the empty descent set contributes \(t(t-1)^{n-1}\).

\begin{proposition}[\cite{Stanley1995}]\label{prop:stanleySinkMap}
The restriction of \(\varphi_t\) to symmetric functions is \(\Phi_t\).
For every nonempty graph \(G\), we have
\begin{equation}\label{eq:stanleySinkExtraction}
\varphi_t(X_G)=\Phi_t(X_G)
=\sum_{\mathcal O\in\AO(G)}t^{\sink(\mathcal O)}
=S_G^{a}(t).
\end{equation}
\end{proposition}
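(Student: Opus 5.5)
The plan has two parts. First show that $\varphi_t$ and $\Phi_t$ agree on symmetric functions. Then compute $\varphi_t(X_G)$ directly from the fundamental expansion of $X_G$ and match it with the acyclic sink polynomial.

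For the first part, it suffices to check agreement on a basis of the symmetric functions; the elementary basis is the natural choice. The obstacle is that $\varphi_t$ is only linear, not multiplicative, so I cannot reduce to $e_m$. Instead I will compute $\varphi_t(e_\lambda)$ directly. Write
\[
e_\lambda=\sum_{w} F_{n,\operatorname{Des}(w^{-1})},
\]
the sum running over permutations $w$ that are a shuffle of decreasing words of lengths $\lambda_1,\lambda_2,\dotsc$. Equivalently, expand $e_\lambda$ as a sum over fillings by strictly increasing columns, standardized.

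Only descent sets of the form $\{i+1,\dotsc,n-1\}$ survive $\varphi_t$. So I must count, for each $i$, the standardizations whose descent set is a final segment starting at $i+1$. These are the arrangements beginning with an ascending run of length $i+1$ and then decreasing. I expect this count to be $\binom{\ell(\lambda)-1}{i}$. Granting that, summing gives
\[
\varphi_t(e_\lambda)=t\sum_i\binom{\ell(\lambda)-1}{i}(t-1)^i=t\cdot t^{\ell(\lambda)-1}=t^{\ell(\lambda)}=\Phi_t(e_\lambda).
\]
The cleanest route to the count is Stanley's reciprocity interpretation. I will phrase the surviving descent sets via the generating function $\sum_{i}(t-1)^i$ and verify it on the monomial expansion $e_\lambda=\sum_\mu M_{\lambda\mu}m_\mu$. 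I expect this combinatorial count to be the main obstacle.

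For the second part, Stanley's theorem gives the fundamental expansion of $X_G$:
\[
X_G=\sum_{(\mathcal O,\sigma)}F_{n,S(\mathcal O,\sigma)},
\]
indexed by acyclic orientations $\mathcal O$ together with compatible linear extensions $\sigma$. Equivalently, it comes from Chow's expansion of $X_G$ as a sum over $P$-partitions of the posets induced by acyclic orientations. For a fixed acyclic orientation, I will identify the extensions whose descent set has the form $\{i+1,\dotsc,n-1\}$. I will then show that the signed sum $\sum t(t-1)^i$ over them collapses to $t^{\sink(\mathcal O)}$.

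Concretely, for a poset $P$ the quantity
\[
\sum_{\sigma}\varphi_t(F_{\operatorname{Des}\sigma})
\]
records linear extensions that first list an increasing chain of minimal elements and then decrease. Here the minimal elements are exactly the sinks (or sources, depending on convention). Choosing which $i$ of the remaining $\sink-1$ minimal elements go first gives
\[
\sum_i\binom{\sink(\mathcal O)-1}{i}(t-1)^i\,t=t^{\sink(\mathcal O)}.
\]
This uses that the tail must be the unique decreasing extension of what remains, which I must verify.

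Summing over $\mathcal O$ yields $\varphi_t(X_G)=S^a_G(t)$. Since $X_G$ is symmetric, the first part gives $\Phi_t(X_G)=\varphi_t(X_G)$, which proves the proposition.
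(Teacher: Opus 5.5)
Your second half is the paper's argument, which is due to Stanley. Fix an order-reversing labeling. An extension with descent set $\{i+1,\dotsc,n-1\}$ peaks at the label $n$, and that element is minimal. Every earlier element must also be minimal, since a predecessor would carry a larger label and would have to appear earlier in the increasing prefix. The suffix is the forced decreasing arrangement, which is automatically a linear extension because the labeling is order-reversing. This gives $\varphi_t(K_P)=t^{m}$ for a poset with $m$ minimal elements. Orienting edges from larger to smaller color makes the minimal elements the sinks.

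The gap is in your first half. You leave the count $\binom{\ell(\lambda)-1}{i}$ as an expectation (``Granting that'') and point to a reciprocity/monomial-expansion route that you never carry out. So the claim $\varphi_t|_{\mathrm{Sym}}=\Phi_t$ is not actually proved. No new idea is needed to close it. The shuffle-of-decreasing-words expansion you wrote for $e_\lambda$ is exactly the linear-extension expansion of $K_P$, where $P$ is a disjoint union of chains of sizes $\lambda_1,\dotsc,\lambda_k$ with an order-reversing labeling. This $P$ has $\ell(\lambda)$ minimal elements. So the poset identity from your second half, proved once for arbitrary $P$, gives $\varphi_t(e_\lambda)=t^{\ell(\lambda)}$ at once. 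This is how the paper organizes the proof: both claims come from the single identity $\varphi_t(K_P)=t^{m}$.

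You should also fix a slip in that expansion. If $w$ runs over shuffles of decreasing words, the index must be $\operatorname{Des}(w)$, not $\operatorname{Des}(w^{-1})$. For $\lambda=(2,1)$ the shuffles are $213,231,321$. Using $\operatorname{Des}(w)$ gives $F_{3,\{1\}}+F_{3,\{2\}}+F_{3,\{1,2\}}=e_{21}$, whose image is $t(t-1)+t=t^2$. Using $\operatorname{Des}(w^{-1})$ instead gives $2F_{3,\{1\}}+F_{3,\{1,2\}}$, which is not symmetric and has image $t$. Your description of the surviving arrangements, an ascending run followed by a decreasing run, already uses the descent set of $w$ itself. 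So the argument is consistent once this is corrected.
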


\begin{proof}
We follow Stanley's proof through strict order-preserving maps of posets.
Let \(K_P\) enumerate these maps for a nonempty poset \(P\), and suppose
\(P\) has \(m\) minimal elements.  Choose an order-reversing labeling of
\(P\) by \([n]\).  The fundamental expansion of \(K_P\) is the sum of
\(F_{n,\operatorname{Des}(\sigma)}\) over the labeled linear extensions.

An extension with descent set \(\{i+1,\dotsc,n-1\}\) consists of an
increasing prefix, the largest label \(n\), and a decreasing suffix.
Every element before \(n\) must be minimal: a predecessor would have a
larger label and would have to appear earlier in the increasing prefix.
Conversely, choose any \(i\) of the other \(m-1\) minimal elements, list
them in increasing label order, then list \(n\) and all remaining elements
in decreasing label order.  This gives all such extensions.  Hence
\[
\varphi_t(K_P)
=\sum_{i=0}^{m-1}\binom{m-1}{i}t(t-1)^i=t^m.
\]
For a disjoint union of chains of sizes \(\lambda_1,\dotsc,\lambda_k\),
we have \(K_P=e_\lambda\) and \(m=k\).  This proves
\(\varphi_t(e_\lambda)=t^{\ell(\lambda)}\), and therefore
\(\varphi_t|_{\mathrm{Sym}}=\Phi_t\).

Orient each edge of a properly colored graph from its larger color to its
smaller color.  These orientations are acyclic.  For a fixed orientation,
order the vertices so that a directed edge points from a larger to a smaller
element.  Its compatible colorings are precisely the strict order-preserving
maps, and its minimal elements are the sinks.  Applying the preceding
identity to each orientation proves \eqref{eq:stanleySinkExtraction}.
\end{proof}

\begin{remark}
    The maps \(\Psi_t\) and \(\Phi_t\) have different algebraic roles: \(\Psi_t\) records fundamental length linearly, \(\Phi_t\) records elementary length multiplicatively, and \(\varphi_t\) extends \(\Phi_t\) linearly to quasisymmetric functions.
\end{remark}

\subsection{X-descent sets and sinks}

We now apply both maps to the same descent expansion.  For a relation
\(R\subseteq[n]\times[n]\), put
\[
\defin{\operatorname{Des}_R(\sigma)}
\coloneqq\{j\in[n-1]:(\sigma_j,\sigma_{j+1})\in R\},
\qquad
\defin{A_R(t)}\coloneqq
\sum_{\sigma\in\mathfrak S_n}t^{|\operatorname{Des}_R(\sigma)|}.
\]
This is the \(X\)-descent statistic, with the relation denoted by \(R\)
to distinguish it from the symmetric-function alphabet.  In the
R\'edei--Berge function of D.~Grinberg and
R.~P.~Stanley~\cite[Definitions~1.6 and~1.16]{GrinbergStanley2023},
\[
\defin{U_R}\coloneqq
\sum_{\sigma\in\mathfrak S_n}F_{n,\operatorname{Des}_R(\sigma)},
\qquad \Psi_t(U_R)=tA_R(t).
\]

Let \(P\) be a poset on \([n]\), let \(H\) be its incomparability graph,
and take \(R=\{(a,b):a>_P b\}\).  Shareshian and Wachs use decreasing
indices in their fundamental functions
\cite[Section~3, footnote~3]{ShareshianWachs2016}.  Reversing compositions
converts their convention to ours and fixes symmetric functions.
Their fundamental expansion~\cite[Theorem~3.1]{ShareshianWachs2016},
at unit ascent weight, therefore identifies
\(U_R=\omega X_H\).  On symmetric functions, \(\omega\) is induced by
the linear operation \(F_{n,S}\mapsto F_{n,[n-1]\setminus S}\), as follows
from the tableau argument in Proposition~\ref{prop:fundamentalChromatic}.
We obtain
\[
X_H=\sum_{\sigma\in\mathfrak S_n}
F_{n,[n-1]\setminus\operatorname{Des}_R(\sigma)}.
\]
An initial interval \([i]=\{1,\dotsc,i\}\), with \([0]=\varnothing\),
has complement \(\{i+1,\dotsc,n-1\}\).  Applying
Proposition~\ref{prop:stanleySinkMap} gives the following formula:

\begin{corollary}\label{cor:xDescentSinks}
For the incomparability graph \(H\) of a poset \(P\) on \([n]\), with
\(n\geq1\) and \(R=\{(a,b):a>_P b\}\), we have
\begin{equation}\label{eq:xDescentSinks}
S_H^{a}(t)
=t\sum_{i=0}^{n-1}
\bigl|\{\sigma\in\mathfrak S_n:
\operatorname{Des}_R(\sigma)=[i]\}\bigr|(t-1)^i.
\end{equation}
\end{corollary}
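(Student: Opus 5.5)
The plan is to apply Stanley's sink map \(\varphi_t\) term by term to the displayed fundamental expansion
\[
X_H=\sum_{\sigma\in\mathfrak S_n}F_{n,[n-1]\setminus\operatorname{Des}_R(\sigma)},
\]
and then use Proposition~\ref{prop:stanleySinkMap}. That proposition gives \(\varphi_t(X_H)=S_H^{a}(t)\) for every nonempty graph, and \(H\) is nonempty because \(n\geq1\).

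By linearity, \(\varphi_t(X_H)=\sum_\sigma\varphi_t\bigl(F_{n,[n-1]\setminus\operatorname{Des}_R(\sigma)}\bigr)\). By the definition \eqref{eq:stanleySinkMap}, each summand vanishes unless the set \([n-1]\setminus\operatorname{Des}_R(\sigma)\) has the form \(\{i+1,\dotsc,n-1\}\) for some \(0\leq i\leq n-1\). In that case the summand equals \(t(t-1)^i\). Taking complements in \([n-1]\), this condition is equivalent to \(\operatorname{Des}_R(\sigma)=[i]\), with \([0]=\varnothing\). The index \(i\) is then uniquely determined by \(\sigma\), namely \(i=|\operatorname{Des}_R(\sigma)|\).

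Grouping the surviving permutations by this \(i\) gives exactly the right-hand side of \eqref{eq:xDescentSinks}. The identification with \(S_H^{a}(t)\) then follows from \eqref{eq:stanleySinkExtraction}.

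The only substantive input is the expansion of \(X_H\) itself. It rests on the Shareshian--Wachs fundamental expansion at \(q=1\), which gives \(U_R=\omega X_H\) once their index convention is reversed. It also uses the fact that \(\omega\) acts on symmetric functions as complementation of descent sets in the fundamental basis. This step is the main point to check, and the paper has already justified it in the preceding discussion.

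One subtlety deserves care. The complementation map \(F_{n,S}\mapsto F_{n,[n-1]\setminus S}\) is not an algebra map on quasisymmetric functions. What matters is that the expansion of \(X_H\) it produces is a genuine fundamental expansion, so \(\varphi_t\), which is defined linearly on fundamentals, may be applied to it directly. Because the fundamental quasisymmetric functions form a basis, the expansion is unique, and this application is legitimate.
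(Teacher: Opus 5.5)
Your proposal is correct and follows the paper's own argument: the paper likewise applies $\varphi_t$ linearly to the complemented expansion $X_H=\sum_{\sigma}F_{n,[n-1]\setminus\operatorname{Des}_R(\sigma)}$, uses that the complement of $[i]$ is $\{i+1,\dotsc,n-1\}$, and concludes with Proposition~\ref{prop:stanleySinkMap}. One correction to your closing aside, which your argument never actually uses: the complementation map $F_{n,S}\mapsto F_{n,[n-1]\setminus S}$ is in fact an algebra automorphism of quasisymmetric functions (replace each letter $k$ by $N+1-k$ in Gessel's shuffle-product formula), so the step is justified simply because $\varphi_t$ is linear and the $F_{n,S}$ form a basis.
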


\begin{example}\label{ex:xDescentPath}
For the path \(P_3\) with edges \(\{1,2\}\) and \(\{2,3\}\), we
take the poset whose only strict relation is \(1<_P 3\), so
\(R=\{(3,1)\}\).  Four permutations have empty \(R\)-descent set;
\(\mathtt{312}\) has descent set \(\{1\}\), and \(\mathtt{231}\) has descent set
\(\{2\}\).  Hence
\[
\omega X_{P_3}=4F_{3,\varnothing}+F_{3,\{1\}}+F_{3,\{2\}},
\qquad \Psi_t(\omega X_{P_3})=4t+2t^2.
\]
For sink extraction, the initial-interval coefficients are \(4,1,0\).
Thus
\[
\varphi_t(X_{P_3})=4t+t(t-1)=3t+t^2
=\Phi_t(X_{P_3}).
\]
Three orientations of this path have one sink, and one has two sinks.
Fundamental length combines the two singleton descent sets, whereas
Stanley's extraction assigns them different contributions.
\end{example}

For \(H_h\), let \(P_h\) be the natural unit interval order defined by
\(i<_{P_h}j\) if \(h(i)<j\), and take
\(R_h=\{(i,j):i>_{P_h}j\}=\{(i,j):i>h(j)\}\).
The fundamental expansion of Shareshian and Wachs
\cite[Theorem~3.1]{ShareshianWachs2016} gives
\[
\omega X_{H_h}(X;q)
=\sum_{\sigma\in\mathfrak S_n}
 q^{\operatorname{inv}_{H_h}(\sigma)}F_{n,\operatorname{Des}_{R_h}(\sigma)},
\]
where \(\operatorname{inv}_{H_h}(\sigma)\) counts pairs of positions
\(i<j\) for which \(\sigma_i>\sigma_j\) and
\(\{\sigma_i,\sigma_j\}\) is an edge.  The symmetry of
\(X_{H_h}(X;q)\), proved in \cite[Theorem~4.5]{ShareshianWachs2016},
justifies the same conversion of fundamental conventions for every \(q\).
Hence
\begin{equation}\label{eq:xDescentSpecialization}
\Psi_t(\omega X_{H_h}(X;q))
=t\sum_{\sigma\in\mathfrak S_n}
 q^{\operatorname{inv}_{H_h}(\sigma)}
 t^{|\operatorname{Des}_{R_h}(\sigma)|}.
\end{equation}
At \(q=1\), Corollary~\ref{cor:fundamentalChordal} proves real-rootedness
of the \(X\)-Eulerian polynomial \(A_{R_h}(t)\).
Applying \(\varphi_t\omega\) instead gives
\begin{equation}\label{eq:xDescentWeightedSinks}
S_{H_h}^{a}(t;q)
=t\sum_{i=0}^{n-1}(t-1)^i
\sum_{\substack{\sigma\in\mathfrak S_n\\
\operatorname{Des}_{R_h}(\sigma)=[i]}}
q^{\operatorname{inv}_{H_h}(\sigma)}.
\end{equation}
Here we use \(\varphi_t|_{\mathrm{Sym}}=\Phi_t\) and the ascent-refined
sink theorem \cite[Theorem~5.3]{ShareshianWachs2016}.  Theorem~\ref{thm:unitIntervalAscent}
proves real-rootedness of this sink specialization for every fixed \(q\geq0\).

For comparison, \(C_4\) has
\[
\Psi_t(X_{C_4})=2t^2+8t^3+14t^4,
\qquad
\Psi_t(\omega X_{C_4})=14t+8t^2+2t^3.
\]
Their quadratic factors have discriminant \(-48\).  Its ordinary acyclic
sink polynomial is \(12t+2t^2\), illustrating the distinction between
the real-rootedness behavior of the two maps.

\subsection{Further questions}

We write \(\mathrm{QSym}_n\) for the real vector space of homogeneous
quasisymmetric functions of degree \(n\), and ask which weights on the
fundamental basis give real-rooted images of a specified family.

\begin{question}\label{ques:realRootedSpecializations}
For a specified family \(\mathcal F_n\subseteq\mathrm{QSym}_n\), classify
the linear maps
\[
\defin{L_n(F_{n,S})}\coloneqq w_{n,S}(t)\in\mathbb R[t],
\qquad \deg w_{n,S}\leq n,
\]
normalized by \(L_1(F_{1,\varnothing})=t\), such that every
\(L_n(f)\), for \(f\in\mathcal F_n\), is either zero or has
nonnegative coefficients and only real zeros.  We ask this separately for
the family of chromatic functions of chordal graphs, its \(\omega\)-image,
and the family \(\{U_R:R\subseteq[n]\times[n]\}\).
For the last family, we also ask the question with \(R\) restricted to
strict partial orders.
For the family \(X_{H_h}(X;q)\) of natural unit interval graphs, which maps
have this property for every fixed real \(q\geq0\)?  In particular, is
\(\Psi_t(X_{H_h}(X;q))\) real-rooted for every such \(h\) and \(q\)?
\end{question}

Complementation gives one constraint to study.  If the weights satisfy
\[
w_{n,[n-1]\setminus S}(t)=t^{d_n}w_{n,S}(1/t)
\]
for a fixed integer \(d_n\), then the corresponding images of \(f\) and
\(\omega f\), for symmetric \(f\), are reciprocal polynomials.
For \(\Psi_t\) we have \(d_n=n+1\), so the two versions of the final
question are equivalent.

Multiplicativity supplies a different mechanism: an algebra homomorphism
on symmetric functions sends the chromatic function of a disjoint union to
a product.  Stanley's map realizes such a homomorphism by linear extraction
on the larger space of quasisymmetric functions.  We can therefore ask for
conditions on \(w_{n,S}\) that make the restrictions of the \(L_n\)
multiplicative, or that yield real-rootedness directly from descent-set
coefficients.  Principal-specialization formulas such as
\eqref{eq:fundamentalPrincipal} give a third route.  When a family admits a
stable multivariate refinement, a specialization preserving stability could
provide sufficient conditions for its univariate images to be real-rooted.

At \(q=1\), Brenti's theorem applies to \(\Psi_t\) on chordal chromatic
functions.  For \(\Phi_t\), Theorem~\ref{thm:chordalAcyclic} treats
chordal claw-free graphs, and Theorem~\ref{thm:unitIntervalAscent} treats
natural unit interval graphs for every \(q\geq0\).
Question~\ref{ques:realRootedSpecializations} asks for the corresponding
\(q\)-refined fundamental-length result and for conditions on the weights
that explain these real-rooted specializations.

\section*{Declaration on the use of generative AI}

During the development and preparation of this work, the authors used ChatGPT 6 Astra (OpenAI) \cite{openai2026chatgpt} for exploratory discussions, including suggestions for mathematical approaches, as well as for language and presentation improvements, and in the elaboration of the pictures. Any AI-generated mathematical suggestions were independently checked, modified when necessary, and incorporated only after verification by the authors. Generative AI was not treated as a source of mathematical authority, and the authors remain fully responsible for all results, proofs, and conclusions in the manuscript.

\end{document}